\DeclareRobustCommand{\greektext}{%
  \fontencoding{LGR}\selectfont\def\encodingdefault{LGR}}
\DeclareRobustCommand{\textgreek}[1]{\leavevmode{\greektext #1}}
\numberwithin{equation}{section}
\numberwithin{figure}{section}
\theoremstyle{plain}
\newtheorem{thm}{\protect\theoremname}[section]
\theoremstyle{plain}
\newtheorem{cor}[thm]{\protect\corollaryname}
\theoremstyle{plain}
\newtheorem{lem}[thm]{\protect\lemmaname}
\theoremstyle{plain}
\newtheorem{prop}[thm]{\protect\propositionname}
\theoremstyle{remark}
\newtheorem{rem}[thm]{\protect\remarkname}
\def\makebbb#1{
    \expandafter\gdef\csname#1\endcsname{
        \ensuremath{\Bbb{#1}}}
}\makebbb{R}\makebbb{N}\makebbb{Z}\makebbb{C}\makebbb{H}\makebbb{E}\makebbb{H}\makebbb{P}\makebbb{B}\makebbb{Q}\makebbb{E}\makebbb{E}
\providecommand{\corollaryname}{Corollary}
\providecommand{\lemmaname}{Lemma}
\providecommand{\propositionname}{Proposition}
\providecommand{\remarkname}{Remark}
\providecommand{\theoremname}{Theorem}
\begin{document}
\title{Reverse Hölder inequalities on the space of Kähler metrics of a Fano
variety and effective openness}
\author{Robert J. Berman}
\begin{abstract}
A reverse Hölder inequality is established on the space of Kähler
metrics in the first Chern class of a Fano manifold $X$ endowed with
Darvas' $L^{p}-$Finsler metrics. The inequality holds under a uniform
bound on a twisted Ricci potential and extends to Fano varieties with
log terminal singularities. Its proof leverages a ``hidden'' log-concavity.
An application to destabilizing geodesic rays  is provided, which
yields a reverse Hölder inequality for the speed of the geodesic.
In the case of Aubin's continuity path on a K-unstable Fano variety,
the constant in the corresponding Hölder bound is shown to only depend
on $p$ and the dimension of $X.$ This leads to some intruiging relations
to Harnack bounds and the partial $C^{0}-$estimate. In another direction,
universal effective openness results are established for the complex
singularity exponents (log canonical thresholds) of $\omega-$plurisubharmonic
functions on any Fano variety. Finally, another application to K-unstable
Fano varieties is given, involving Archimedean Igusa zeta functions. 
\end{abstract}

\maketitle

\section{Introduction}

\subsection{Reverse Hölder inequalities on the space of Kähler metrics}

Let $X$ be an $n-$dimensional compact connected Kähler manifold.
Consider the space $\mathcal{H}$ of all Kähler metrics on $X,$ in
a fixed cohomology class in $H^{2}(X,\R).$ Assuming that $\mathcal{H}$
contains some Kähler metric $\omega,$ the space $\mathcal{H}$ may
be identified with the quotient space $\mathcal{H}(X,\omega)/\R,$
where $\mathcal{H}(X,\omega)$ denotes the space of all Kähler potentials
(relative to $\omega):$
\begin{equation}
\mathcal{H}(X,\omega):=\left\{ u\in C^{\infty}(X):\,\omega_{u}:=\omega+dd^{c}u>0\right\} \,\,\,\,\left(dd^{c}u:=\frac{i}{2\pi}\partial\bar{\partial}u\right).\label{eq:def of space H intro}
\end{equation}
A canonical Riemannanian metric on $\mathcal{H}(X,\omega)$ was introduced
in \cite{ma,se,do}, turning $\mathcal{H}$ into an infinite dimensional
Riemannian symmetric space of constant non-negative sectional curvature.
More generally, a canonical $L^{p}-$Finsler metric on $\mathcal{H}(X,\omega)$
was put forth in \cite{da}, defined by 
\[
\left\Vert \frac{du_{t}}{dt}\right\Vert _{p}:=\left(\int_{X}\left|\frac{du_{t}}{dt}\right|^{p}\frac{\omega_{u_{t}}^{n}}{V}\right)^{1/p},\,\,\,V:=\int_{X}\omega^{n}
\]
This Finsler metric induces a bona fide metric $d_{p}$ on $\mathcal{H}(X,\omega),$
as shown in \cite{da} (generalizing the case $p=2$ established in
\cite{ch}). From the point of view of quantization, the metric space
$(\mathcal{H}(X,\omega),d_{p})$ can be viewed as a limit of the finite
dimensional spaces $GL(N,\C)/U(N),$ endowed with an $L^{p}-$Finsler
metric induced by the standard $l^{p}-$norm on $\R^{N},$ as $N\rightarrow\infty$
\cite{d-l-r}. 

An important motivation for allowing $p\neq2$ comes from the existence
problem for canonical metrics on $X,$ where $p=1$ plays a privileged
role. For example, assuming for simplicity that $X$ admits no non-trivial
holomorphic vector fields, there exists, by \cite{c-cII,b-d-l2},
a constant scalar curvature Kähler (CSCK) metric in $\mathcal{H}$
iff the Mabuchi functional $\mathcal{M}$ on $\mathcal{H},$ introduced
in \cite{ma}, admits a minimizer (namely the CSCK metric) iff $\mathcal{M}$
is coercive with respect to $d_{1},$ i.e. iff 
\[
\mathcal{M}(u)\geq Cd_{1}(u,0)-C
\]
 for some constant $C$ (where $\mathcal{H}$ has been identified
with the space $\mathcal{H}(X,\omega)_{0}$ of all Kähler potentials
$u$ satisfying $\sup_{X}u=0).$ Moreover, in the \emph{Fano case
-} i.e when $\omega$ is in the first Chern class $c_{1}(X)$ of $X$
- the coercivity in question is equivalent to (uniform) K-stability,
by the solution of the Yau-Tian-Donaldson conjecture in this case
\cite{c-d-s,d-s,b-b-j,l-x-z}.

By Hölder's inequality, applied to $\left\Vert \frac{du_{t}}{dt}\right\Vert _{p},$ 

\[
d_{1}(u,0)\leq d_{p}(u,0)
\]
 However, in general, $d_{p}(u,0)$ can not be controlled by $d_{1}(u,0),$
since the metric completions $\overline{(\mathcal{H}(X,\omega),d_{p})}$
are strictly decreasing with respect to $p.$ This is illustrated
by the toric case, where $d_{p}$ may be identified with the standard
$L^{p}-$norm for the space of convex functions on the moment polytope
of $X$ \cite{gu}. In contrast, a reverse Hölder inequality does
hold in $\R^{N}$ (by the compactness of the unit-sphere), 
\begin{equation}
\left\Vert \cdot\right\Vert _{p}\leq C_{N}\left\Vert \cdot\right\Vert _{1},\label{eq:reverse H with N}
\end{equation}
 but the constant $C_{N}$ diverges as $N\rightarrow\infty.$ Still,
the following result shows that in the Fano case a reverse Hölder
type inequality holds on $\mathcal{H}(X,\omega),$ under the assumption
that  the twisted Ricci potential $\rho_{u,\gamma}$ of $u$ is uniformly
bounded, for some $\gamma\in]0,1[.$
\begin{thm}
\label{thm:main intro}Let $X$ be an $n-$dimensional Fano manifold.
Given a Kähler form $\omega$ in $c_{1}(X),$ the following inequality
holds on $\mathcal{H}(X,\omega),$ for any $p\in[1,\infty[$ and $\gamma\in]0,1[$

\[
d_{p}(u,0)\leq Ad_{1}(u,0)+B,
\]
 with
\[
A=A_{p}e^{2\left\Vert \rho_{u,\gamma}\right\Vert _{L^{\infty}}},\,\,\,B=B_{p}\left(\gamma^{-1}+(1-\gamma)^{-1}\right)e^{\left\Vert \rho_{u,\gamma}\right\Vert _{L^{\infty}}}
\]
where $A_{p}$ only depends on $(p,n)$ and $B_{p}$ also depends
on $(X,$$\omega).$ Moreover, if $\sup_{X}u\leq0$ then $A_{p}/(n+1)$
is independent of $n$ and $B_{p}$ only depends on $(p,n).$ 
\end{thm}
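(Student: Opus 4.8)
The plan is to pass to the (weak) Mabuchi geodesic, trade the Monge--Amp\`ere measure for a twisted volume form using the bound on $\rho_{u,\gamma}$, and then prove a reverse H\"older inequality for the velocity of the geodesic that rests on a hidden log-concavity supplied by Berndtsson's positivity theorem. First I would let $(u_t)_{t\in[0,1]}$ be the weak Mabuchi geodesic joining $u_0=0$ to $u_1=u$, i.e.\ the solution of the homogeneous complex Monge--Amp\`ere equation on $X$ times an annulus. By the Finsler theory of \cite{da} (which extends the case $p=2$ of \cite{ch}), this single curve is a $d_p$-geodesic for every $p\in[1,\infty[$ and its speed $\|\dot u_t\|_{L^p(\omega_{u_t}^n/V)}$ is independent of $t$; evaluating at $t=1$ gives
\[
d_p(u,0)=\Bigl(\int_X|\dot u_1|^p\,\tfrac{\omega_u^n}{V}\Bigr)^{1/p},\qquad d_1(u,0)=\int_X|\dot u_1|\,\tfrac{\omega_u^n}{V}.
\]
So the whole theorem comes down to a reverse H\"older inequality for the single function $f:=\dot u_1$ with respect to the probability measure $\omega_u^n/V$.

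Next I would trade this measure for a twisted volume form. Writing $dV_\omega$ for the volume form with $\mathrm{Ric}\,dV_\omega=\omega$, the defining property of the twisted Ricci potential is $\omega_u^n/V=e^{-\rho_{u,\gamma}}\,\nu_u$ with $\nu_u:=e^{-\gamma u}\,dV_\omega\big/\!\int_X e^{-\gamma u}\,dV_\omega$, whence $e^{-\|\rho_{u,\gamma}\|_{L^\infty}}\nu_u\le \omega_u^n/V\le e^{\|\rho_{u,\gamma}\|_{L^\infty}}\nu_u$. Inserting this two-sided bound into the previous display (the exponent $1/p$ only improving the exponential factors) reduces the theorem to the inequality
\[
\Bigl(\int_X|f|^p\,d\nu_u\Bigr)^{1/p}\ \le\ A_p\int_X|f|\,d\nu_u\ +\ B_p\bigl(\gamma^{-1}+(1-\gamma)^{-1}\bigr),
\]
with $A_p,B_p$ as claimed: the factors $e^{2\|\rho_{u,\gamma}\|_{L^\infty}}$ and $e^{\|\rho_{u,\gamma}\|_{L^\infty}}$ in $A$ and $B$ are exactly what this comparison produces.

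The core of the argument is this last inequality, and it is where I expect the real work — and the ``hidden log-concavity'' — to lie. Consider the complexified geodesic $\Phi(x,\tau)=u_{\operatorname{Re}\tau}(x)$. Because $\gamma\in\,]0,1[$, the function $\gamma\Phi$ is still $p^*\omega$-plurisubharmonic and, fibrewise, $\omega+dd^c(\gamma u_t)=(1-\gamma)\omega+\gamma\omega_{u_t}\ge(1-\gamma)\omega>0$; this fibrewise ``room'' is what singles out the weight $\gamma$ rather than $p=2$ and is the source of the factor $(1-\gamma)^{-1}$. Applying Berndtsson's theorem on the plurisubharmonic variation of the logarithm of a fibre integral (the complex analogue of Pr\'ekopa's theorem) to the weighted volume form $e^{-\gamma\Phi}dV_\omega$ yields the convexity of $t\mapsto-\log\int_X e^{-\gamma u_t}\,dV_\omega$; the point I would then extract — the hidden log-concavity — is that the push-forward under the velocity map $x\mapsto\dot u_1(x)$ of the probability measure $\nu_u$ is, up to a defect controlled quantitatively by $1-\gamma$ and by $\gamma$, a log-concave probability measure on $\R$. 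Borell's lemma (i.e.\ the reverse H\"older inequality for log-concave measures, with a constant depending only on $p$) then produces the displayed bound, the additive term absorbing the defect and hence carrying the factor $\gamma^{-1}+(1-\gamma)^{-1}$.

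Finally, the refined statement would follow from a sharper accounting in the case $\sup_X u\le 0$, where the $\omega$-plurisubharmonic function $u$ lies below the base point $0$ and the weight $e^{-\gamma u}$ is $\ge 1$: this normalisation removes the dependence of $B_p$ on $(X,\omega)$ and lets the Borell constant be taken proportional to $n+1$, giving the asserted forms of $A_p$ and $B_p$, while in general the comparison with a fixed reference measure costs a constant depending on $(X,\omega)$. The chief obstacle is clearly the third step: isolating the precise log-concave object hiding behind Berndtsson positivity and estimating its defect sharply enough that the reverse H\"older constant for the velocity stays dimension-free once $\sup_X u\le 0$.
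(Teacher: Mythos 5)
Your reduction in the first two steps is sound: the psh geodesic joining $0$ to $u$ is indeed a $d_p$-geodesic for every $p$, so $d_p(u,0)=\bigl(\int_X|\dot u_1|^p\,\omega_u^n/V\bigr)^{1/p}$, and the two-sided comparison $e^{-\|\rho_{u,\gamma}\|_{L^\infty}}\mu_{\gamma u}\le\omega_u^n/V\le e^{\|\rho_{u,\gamma}\|_{L^\infty}}\mu_{\gamma u}$ correctly accounts for the exponential factors. The gap is in your third step. You assert that the push-forward of $\mu_{\gamma u}$ under the velocity map $x\mapsto\dot u_1(x)$ is log-concave up to a defect controlled by $\gamma$ and $1-\gamma$. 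The paper explicitly shows (in the section on failure of log-concavity for general geodesic segments) that the law of $\dot u$ under $\omega^n/V$ is \emph{not} log-concave in general -- if it were, one would get $d_p(u,0)\le C_pd_1(u,0)$ unconditionally, contradicting the strict decrease of the completions $\overline{(\mathcal H,d_p)}$ in $p$. Moreover, in your architecture the Ricci-potential bound has already been spent on switching measures in step two, so nothing remains to control the alleged defect; and the convexity of $t\mapsto-\log\int_Xe^{-\gamma u_t}dV_\omega$ along the Mabuchi geodesic, which you invoke, is a statement about a single convex function of $t$ and does not yield log-concavity of a push-forward measure.

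The actual argument avoids the geodesic velocity altogether. For $\sup_Xu\le0$ one uses Darvas's estimates $d_p(u,0)\le\bigl(\int|u|^p\omega_u^n/V\bigr)^{1/p}$ and $\int|u|\,\omega_u^n/V\le(n+1)d_1(u,0)$ (this last is where the factor $n+1$ enters -- it is \emph{not} a dimensional Borell constant; the Kahane--Khinchin constant for log-concave measures on $\R$ is dimension-free), which reduces everything to a reverse H\"older inequality for the moments of the potential $u$ itself against $\mu_{\gamma u}$ (Theorem \ref{thm:moment bounds}). The hidden log-concave object is then not the law of $\dot u_1$ but the measure $\nu_0=V(t)\,dt$ on $\R$ with $V(t)=dV_Y(\{\psi<t\})$, where $\psi=u+\log r^2$ is the log-homogeneous lift of $u$ to the total space $Y$ of $K_X$ and $dV_Y=e^{-r^2}i^{(n+1)}\Omega\wedge\bar\Omega$; its log-concavity comes from Berndtsson's complex Pr\'ekopa theorem applied to the subgeodesic $t\mapsto\chi(u-t)$ obtained by fibre integration (Proposition \ref{prop:concavity}), and the passage from the moments of $t$ under $\nu_\gamma$ to the moments of $-u$ under $\mu_{\gamma u}$ is done by an induction on $p$ through cumulants of $\log Z_u(\gamma)$. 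Your proposal would need to be restructured along these lines; as written, the central log-concavity claim is false.
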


We recall that the twisted Ricci potential $\rho_{u,\gamma},$ which
depends on $(\omega_{u},\omega,\gamma),$ may be defined by the equations
\begin{equation}
dd^{c}\rho_{u,\gamma}=\text{Ric}\ensuremath{\omega_{u}}-\gamma\omega_{u}-(1-\gamma)\omega,\,\,\,\int_{X}e^{\rho_{u,\gamma}}\frac{\omega_{u}^{n}}{V}=1.\label{eq:def of twisted Ricci intro}
\end{equation}
In particular, $h_{u,t}$ vanishes identically along\emph{ Aubin's
continuity path $\omega_{t},$} defined by the following equations
\cite{au}:
\begin{equation}
\text{Ric}\,\ensuremath{\omega_{t}}=t\ensuremath{\omega_{t}}+(1-t)\omega,\label{eq:Aubin intr}
\end{equation}
which for $t=1$ is the Kähler Einstein equation. By \cite{b-b-j,c-r-z},
the sup over all $t\in[0,1[$ for which the equations are solvable
coincides with $\min\{\delta(X),1\},$ where $\delta(X)$ is the algebro-geometric
invariant introduced in \cite{f-o}. This invariant - known as the
\emph{delta-invariant} or the \emph{stability threshold }of $X$ -
has the property that $X$ is (uniformly) K-stable if and only if
$\delta(X)>1$ and K-semistable iff $\delta(X)\geq1$ \cite{bl-j,l-x-z}

\subsection{Application to destabilizing geodesic rays}

There is a range of deformation methods in Kähler geometry that -
given an initial Kähler form $\omega_{0}$ in $c_{1}(X)$ - produce
a path $\omega_{t}$ of Kähler metrics along which the Mabuchi functional
$\mathcal{M}$ decreases. A notable example - apart from\emph{ }Aubin's
continuity path $\omega_{t}$ -\emph{ }is the Kähler-Ricci flow. For
these two examples it is well-known that, as $t$ is increased, the
$d_{1}-$distance $d_{1}(u_{t},0)$ at the level of Kähler potentials
$u_{t}$ stays bounded iff $X$ admits a Kähler-Einstein metric, in
which case $u_{t}$ converges to a Kähler-Einstein potential. On the
other hand, if $d_{1}(u_{t},0)\rightarrow\infty$ then $u_{t}$ is
weakly asymptotic to a geodesic ray in the metric completion $\overline{(\mathcal{H}(X,\omega),d_{1})}$
along which $\mathcal{M}$ decreases. In fact, the existence of such
a $d_{1}-$geodesic only uses that $\mathcal{M}(u_{t})$ is decreasing
(this result is implicit in \cite{d-h}). However, for special deformations
one should obtain geodesic rays $v_{t}$ with advantageous properties.
In the light of the Yau-Tian-Donaldson conjecture and its ramifications
the best one can hope for is that the ray $v_{t}$ be induced by a\emph{
test configuration }for $X$ \cite{c-d-s,d-s,c-s-w} (as we shall
come back to below). In particular, such a ray is a $d_{p}-$geodesic
ray in the metric completions $\overline{(\mathcal{H}(X,\omega),d_{p})}$
for \emph{any} $p\geq1.$ Here we show that the latter property holds
under a uniform bound on the  twisted Ricci potentials of $u_{t}:$
\begin{cor}
\label{cor:intro}Let $X$ be an $n-$dimensional Fano manifold and
$u_{j}$ a sequence in $\mathcal{H}(X,\omega)$ such that 
\[
(i)\,d_{1}(u_{j},0)\rightarrow\infty,\,\,\,(ii)\,\mathcal{M}(u_{j})\leq C,\,\,\,\ensuremath{(iii)\,\left\Vert \rho_{u,\gamma_{j}}\right\Vert _{L^{\infty}(X)}\leq R}
\]
for some sequence $\gamma_{j}$ contained in a compact subset of $]0,1[.$
Then
\begin{itemize}
\item $u_{j}$ is weakly asymptotic to a ray $v_{t}$ which is a $d_{p}-$geodesic
ray in $\overline{(\mathcal{H}(X,\omega),d_{p})}$ for any $p\in[1,\infty[$
and $t\mapsto\mathcal{M}(v_{t})$ is decreasing. 
\item the $d_{p}-$ speed $\left\Vert \dot{v}\right\Vert _{p}$ of the geodesic
$v_{t}$ satisfies
\begin{equation}
\left\Vert \dot{v}\right\Vert _{p}\leq A\left\Vert \dot{v}\right\Vert _{1},\label{eq:rever Holder for geod}
\end{equation}
for a constant $A$ of the form $A_{p}e^{2R}$ where $A_{p}$ only
depends on $(n,p).$
\end{itemize}
\end{cor}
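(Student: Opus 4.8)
The plan is to apply Theorem~\ref{thm:main intro} to the sequence $u_j$ itself, construct the destabilising ray by the standard argument, and then push the resulting reverse H\"older bound to the limit using finite-energy pluripotential theory. First I would normalise $\sup_X u_j\le 0$ (harmless, since $\mathcal M$, the twisted Ricci potential $\rho_{u_j,\gamma_j}$, and the distances $d_1(\cdot,0),d_p(\cdot,0)$ on $\mathcal H(X,\omega)/\R$ are all unchanged) and feed hypotheses $(ii)$--$(iii)$ into Theorem~\ref{thm:main intro} to get
\[
d_p(u_j,0)\ \le\ A\,d_1(u_j,0)+B_j,\qquad A:=A_pe^{2R},\ \ A_p=A_p(n,p),
\]
where $B_j=B_p\big(\gamma_j^{-1}+(1-\gamma_j)^{-1}\big)e^{\|\rho_{u_j,\gamma_j}\|_{L^\infty}}$ is bounded uniformly in $j$ because $\{\gamma_j\}$ is relatively compact in $]0,1[$ and $\|\rho_{u_j,\gamma_j}\|_{L^\infty}\le R$; set $T_j:=d_1(u_j,0)$, which tends to $\infty$ by $(i)$.

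Next I would recall, from the argument implicit in \cite{d-h}, how $(i)$--$(ii)$ produce the ray. Let $v^{(j)}$ be the unit-$d_1$-speed weak (Mabuchi) geodesic from $0$ to $u_j$ on $[0,T_j]$. By convexity of $\mathcal M$ along weak geodesics together with hypothesis $(ii)$, a subsequence of the $v^{(j)}$ converges, $d_1$-locally uniformly in $t$, to a unit-$d_1$-speed geodesic ray $t\mapsto v_t$ in $\overline{(\mathcal H(X,\omega),d_1)}$ to which $u_j$ is weakly asymptotic. The monotonicity of $t\mapsto\mathcal M(v_t)$ then follows by combining convexity of this function with the bound $\mathcal M(v^{(j)}_t)\le (1-t/T_j)\mathcal M(0)+(t/T_j)\mathcal M(u_j)$, whose $j$-limit is $\mathcal M(0)=\mathcal M(v_0)$, and lower semicontinuity of $\mathcal M$: a convex function on $[0,\infty)$ bounded above by its initial value is nonincreasing.

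The crux is then to upgrade from $d_1$ to $d_p$. I would use that the underlying curve of a weak geodesic joining two potentials does not depend on $p$, so $v^{(j)}$ is simultaneously the $d_p$-geodesic from $0$ to $u_j$ (by Darvas' description of $d_p$-geodesics between finite-energy potentials); being of constant $d_p$-speed, it satisfies $d_p(v^{(j)}_t,0)=(t/T_j)\,d_p(u_j,0)\le At+tB_j/T_j$ by the first step. Since $v^{(j)}_t\to v_t$ in $d_1$, hence in capacity, and $d_p$ is lower semicontinuous under convergence in capacity, letting $j\to\infty$ (so that $B_j/T_j\to 0$) yields $d_p(v_t,0)\le At$; in particular $v_t\in\mathcal E^p(X,\omega)$ for all $t$, so the weak geodesic ray $v$ is a $d_p$-geodesic ray, again by Darvas' theory. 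Comparing the (constant) speeds gives $\|\dot v\|_p=d_p(v_t,0)/t\le A$ while $\|\dot v\|_1=d_1(v_t,0)/t=1$, which is \eqref{eq:rever Holder for geod} with the asserted constant $A=A_pe^{2R}$, $A_p=A_p(n,p)$.

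I expect the main obstacle to be exactly this last limiting step: establishing the lower semicontinuity of $d_p$ along the converging geodesic segments $v^{(j)}$, and verifying that the limiting weak geodesic ray --- now with time-slices in $\mathcal E^p$ --- is an honest metric $d_p$-geodesic ray rather than merely a subgeodesic. These are facts about the finite-energy $L^p$-metric geometry of \cite{da} (and its $L^1$ refinements) and are standard in spirit, but they are where the argument must be made precise; everything else reduces mechanically to Theorem~\ref{thm:main intro} and the compactness of $\{\gamma_j\}$ in $]0,1[$.
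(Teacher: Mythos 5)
Your proposal follows essentially the same architecture as the paper's proof: normalise, apply Theorem~\ref{thm:main intro} to convert $(ii)$--$(iii)$ into the bound $d_p(u_j,0)\le A\,d_1(u_j,0)+B_j$ with $B_j/T_j\to 0$, construct the destabilising ray as a local uniform $d_1$-limit of the unit-speed psh geodesic segments from $0$ to $u_j$ (using convexity of $\mathcal M$ and compactness in $\mathcal E^1$), and then pass the constant-speed $d_p$-bound $d_p(v^{(j)}_t,0)\le At+tB_j/T_j$ to the limit. The one place where you diverge --- and where you yourself flag the potential gap --- is the final limiting step. You invoke ``lower semicontinuity of $d_p$ under convergence in capacity,'' which is not a fact the paper relies on and is delicate to establish directly, because the Darvas comparison $d_p(v,w)\asymp\bigl(\int|v-w|^p(\omega_v^n+\omega_w^n)\bigr)^{1/p}$ involves the non-pluripolar Monge--Amp\`ere measures of both functions. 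The paper avoids this by proving Lemma~\ref{lem:d p conv}: if $v_j\to v$ in $d_1$ \emph{and} $d_{p+\epsilon}(v_j,0)$ is uniformly bounded for some $\epsilon>0$, then $v_j\to v$ in $d_p$. This dominated-convergence lemma is exactly applicable here because your reverse H\"older bound holds for \emph{every} exponent $p$ simultaneously, so $d_{p+\epsilon}(v^{(j)}_t,0)$ is automatically uniformly bounded for each fixed $t\le T$ --- a point worth making explicit, since it is what allows one to sidestep bare lower semicontinuity entirely. (One also needs, as the paper does via \cite[Prop 2.7]{bdl}, that the uniform $d_{p}$-bound forces the $d_1$-limit point $v(t)$ to lie in $\mathcal E^p$.) With that substitution, your argument closes correctly and matches the paper's proof of Corollary~\ref{cor:geodesic rays text}.
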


When the bound on the Ricci potential is replaced by a uniform Harnack
bound, the first item above was established in \cite[Thm 3.2]{d-h}
and applied to the Kähler-Ricci flow. The proof in \cite{d-h} uses
the Harnack bound in \cite{ru}, which also holds for Aubin's continuity
path \cite{ba-m,t1,si}. However, in general, Harnack bounds tend
to require rather detailed control on $\omega_{u},$ such as lower
bounds on the Ricci curvature or uniform Sobelev constants (as discussed
in Section \ref{subsec:Comparison-with-Harnack}). Accordingly, one
advantage of Theorem \ref{thm:main intro} and its corollary is that
they generalize to situations where such bounds are missing. In particular,
as next discussed, the results apply to singular Fano varieties (see
also Section \ref{subsec:Aubin-type-equations} for an application
to Aubin type equations on non-singular $X$ in the presence of non-positive
Ricci curvature).

By the solution of the Yau-Tian-Donaldson conjecture for singular
Fano varieties \cite{l-t-w,li1,l-x-z}, such a variety $X$ admits
a Kähler-Einstein metric if and only if it is K-polystable. For non-singular
$X$ this was originally shown in \cite{c-d-s} using a singular version
of Aubin's continuity path and then in \cite{d-s} using Aubin's original
continuity path $\omega_{t}.$ The proof is based on the partial $C^{0}-$estimate,
which yields a detailed description of the blow-up behaviour of $\omega_{t}$
(as discussed below). However, for singular $X$ the partial $C^{0}-$estimate
is missing and the only proof of the Yau-Tian-Donaldson conjecture
is variational \cite{li1,l-t-w,l-x-z}, building on \cite{bbegz,b-b-j}.
In general, given a positive $(1,1)-$current $\omega$ in $c_{1}(X),$
with locally bounded potentials the variational approach in \cite{bbegz}
shows that there exists a solution $\omega_{t}$ to Aubin's continuity
equation \ref{cor:Aubin intro} for some $t>0$ (in the weak sense
of pluripotential theory) iff $X$ has log terminal singularities.
The following result describes the blow-up behaviour of $\omega_{t}$
for singular $X$ in terms of the metric spaces $\overline{(\mathcal{H}(X,\omega),d_{p})}$
(defined on singular varieties in \cite{d-g}): 
\begin{cor}
\label{cor:Aubin intro}Assume that $X$ is an $n-$dimensional K-unstable
Fano variety with log terminal singularities, i.e. $\delta(X)\in]0,1[.$
Given a positive $(1,1)-$current $\omega$ in $c_{1}(X)$ with locally
bounded potentials, denote by $\omega_{t}$ the corresponding solutions
to Aubin's continuity equation \ref{eq:Aubin intr}, defined for $t\in[0,\delta(X)[.$
Then the curve $u_{t}$ of the corresponding sup-normalized potentials
$u_{t}$ is weakly asymptotic - as $t$ increases towards $\delta(X)$
- to a non-trivial asymptotic ray $v_{t}$, which is a $d_{p}-$geodesic
ray in $\overline{(\mathcal{H}(X,\omega),d_{p})}$ for any $p\in[1,\infty[$
and $t\mapsto\mathcal{M}(v_{t})$ is decreasing. Moreover, the $d_{p}-$
speed $\left\Vert \dot{v}\right\Vert _{p}$ of the geodesic $v_{t}$
satisfies
\begin{equation}
\left\Vert \dot{v}\right\Vert _{p}\leq A_{p}(n+1)\left\Vert \dot{v}\right\Vert _{1},\label{eq:rever Holder for geod-1}
\end{equation}
for a constant $A_{p}$ only depending on $p.$ 
\end{cor}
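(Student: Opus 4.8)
The plan is to deduce Corollary~\ref{cor:Aubin intro} from Corollary~\ref{cor:intro}, after first arranging that we are in the singular setting and then transferring the relevant bounds. Since $\delta(X)\in\,]0,1[$, Aubin's continuity equation \eqref{eq:Aubin intr} is solvable exactly for $t\in[0,\delta(X)[$, and along this path the twisted Ricci potential $\rho_{u_t,\gamma}$ vanishes identically for $\gamma=t$ (by the defining equations \eqref{eq:def of twisted Ricci intro} versus \eqref{eq:Aubin intr}): the condition $\mathrm{Ric}\,\omega_t=t\omega_t+(1-t)\omega$ forces $dd^c\rho_{u_t,t}=0$, and the normalization then gives $\rho_{u_t,t}\equiv0$. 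So the hypothesis $(iii)$ of Corollary~\ref{cor:intro} holds trivially with $R=0$. First I would check hypothesis $(ii)$: along Aubin's path $t\mapsto\mathcal M(u_t)$ is decreasing (this is the standard monotonicity of the Mabuchi functional along the continuity method, valid also in the log terminal singular setting by the variational/pluripotential machinery of \cite{bbegz,b-b-j}), hence $\mathcal M(u_t)\le\mathcal M(u_0)=:C$. Then I would verify $(i)$, namely that $d_1(u_t,0)\to\infty$ as $t\uparrow\delta(X)$: if it stayed bounded, then by the compactness/lower-semicontinuity properties of $(\overline{\mathcal H(X,\omega)},d_1)$ together with the decreasing, bounded-below Mabuchi functional, one would extract a limit solving Aubin's equation at $t=\delta(X)$, contradicting the definition of $\delta(X)$ as the supremum of solvable parameters (here using that $\delta(X)<1$, so the limiting equation is not the K\"ahler--Einstein equation and genuine solvability would push past $\delta(X)$). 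This is the step where the singular setting requires care, but it is exactly the content cited from \cite{b-b-j,c-r-z,bbegz}.

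Granting $(i)$--$(iii)$, the singular analogue of Corollary~\ref{cor:intro} --- established by the same argument in the setting of $\overline{(\mathcal H(X,\omega),d_p)}$ for log terminal Fano varieties as defined in \cite{d-g}, since Theorem~\ref{thm:main intro} is asserted there to extend to that case --- produces a non-trivial $d_p$-geodesic ray $v_t$ to which $u_t$ is weakly asymptotic, with $t\mapsto\mathcal M(v_t)$ decreasing, and with speed bound $\|\dot v\|_p\le A e^{2R}\|\dot v\|_1$. Since $R=0$ here, the constant is simply $A$, and I would invoke the second clause of Theorem~\ref{thm:main intro}: the sup-normalized potentials satisfy $\sup_X u_t\le 0$, so the relevant constant is the one for which $A_p/(n+1)$ is independent of $n$, i.e.\ of the form $A_p(n+1)$ with $A_p$ depending only on $p$. (Strictly, the second clause of Theorem~\ref{thm:main intro} gives the bound $d_p(u,0)\le A_p(n+1)d_1(u,0)+B_p$ with $B_p$ depending only on $(p,n)$; passing to the geodesic speeds $\|\dot v\|_p=\lim_t d_p(u_t,0)/t$ and $\|\dot v\|_1=\lim_t d_1(u_t,0)/t$ --- both limits existing because $v_t$ is the weak asymptotic ray --- the additive constant $B_p$ and the error terms $B$ drop out in the limit, leaving precisely \eqref{eq:rever Holder for geod-1}.)

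The one genuinely substantive point, beyond bookkeeping, is the passage to the limit at the level of geodesic speeds: one must know that $d_p(u_t,0)/t$ converges to the $d_p$-speed of the asymptotic ray $v_t$, uniformly enough in $p$ that the inequality survives. For $p=1$ this is part of the "weakly asymptotic" package (the ray is the $d_1$-limit slope), and for general $p$ it follows from applying the already-established bound $d_p(u_t,0)\le A_p(n+1)d_1(u_t,0)+B_p$ along the path: dividing by $t$ and letting $t\uparrow\delta(X)$ shows the $d_p$-slopes are finite and bounded by $A_p(n+1)$ times the $d_1$-slope, which forces $v_t$ to actually be a $d_p$-geodesic ray (not merely a $d_1$ one) and yields \eqref{eq:rever Holder for geod-1}. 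I expect this limiting argument --- reconciling the weak-asymptotics in $d_1$ with genuine $d_p$-geodesy, in the singular pluripotential framework --- to be the main obstacle, whereas the monotonicity of $\mathcal M$, the vanishing of $\rho_{u_t,t}$, and the blow-up $d_1(u_t,0)\to\infty$ are either immediate or quotable from the cited literature.
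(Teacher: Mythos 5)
Your overall route is the paper's: reduce to the singular version of Corollary \ref{cor:intro} (i.e.\ Corollary \ref{cor:geodesic rays text}), observe that $\rho_{u_t,t}\equiv 0$ so that hypothesis $(iii)$ holds with $R=0$, obtain $(ii)$ from monotonicity of the (twisted) Mabuchi functional along the path, and invoke the clause for sup-normalized potentials to get the constant $A_p(n+1)$. The difficulty you single out at the end --- reconciling the $d_1$-asymptotics with genuine $d_p$-geodesy and passing to the speed inequality --- is not an additional obstacle for this corollary: it is already packaged inside Corollary \ref{cor:geodesic rays text}, whose proof contains exactly that limiting argument, so once its hypotheses are verified nothing more is needed there.

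The genuine gap is in your verification of $(i)$, the divergence $d_1(u_t,0)\rightarrow\infty$. You argue that a bounded subsequence would produce a limit solving Aubin's equation at $t=\delta(X)$, ``contradicting the definition of $\delta(X)$ as the supremum of solvable parameters''. But solvability \emph{at} the supremum does not contradict its being the supremum --- the sup could simply be attained --- and your parenthetical remark that ``genuine solvability would push past $\delta(X)$'' is precisely the assertion that requires proof. The paper's mechanism is: since $\delta(X)<1$, the uniqueness theorem \cite[Thm 11.1]{bbegz} shows the limit $u_\delta$ is unique modulo the flow of a holomorphic vector field preserving $\omega$, and such a field must vanish identically by \cite[Prop 8.2]{bern1b} applied to a resolution of $X$; uniqueness of the minimizer then yields coercivity of $\mathcal{M}_{\delta(X)}$ on $\mathcal{E}^{1}(X,\omega)$; coercivity is stable under replacing $t$ by $t+\epsilon$, so $\mathcal{M}_{\delta(X)+\epsilon}$ has a minimizer solving the equation for $t=\delta(X)+\epsilon$ --- and only \emph{this} contradicts the characterization of $\delta(X)$. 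The chain uniqueness $\Rightarrow$ coercivity $\Rightarrow$ openness of solvability is the substantive content of the proof and is exactly where the hypothesis $\delta(X)<1$ enters (at $t=1$ uniqueness can fail because of automorphisms); it cannot be replaced by a bare appeal to the definition of $\delta(X)$.
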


When $X$ is non-singular we show that $A_{p}$ can be taken as $1,$
using the Harnack type bound in \cite{t1} (see Section \ref{subsec:Comparison-with-Harnack}).
It should be stressed that in the singular case there is an infinite
number of deformation types of K-unstable Fano varieties with log
terminal singularities in any given dimension $n.$ Indeed, this is
the case already for toric Fano varieties; see the examples 4.2 in
\cite[page 100]{de}, which are K-unstable by \cite{berm-bern0}.

\subsubsection{Relations to the partial $C^{0}$ -estimate}

There are some intriguing relations between the inequality \ref{eq:rever Holder for geod-1}
and the partial $C^{0}$ -estimate along  Aubin's continuity path
conjectured in \cite{t2} and established in \cite{sz}, when $X$
is non-singular. To explain this recall that the partial $C^{0}$
-estimate says that the Kähler potential $u_{t}$ of $\omega_{t}$
is of the form 
\begin{equation}
u_{t}=\varphi_{t}+O(1),\label{eq:partial}
\end{equation}
 where $O(1)$ is uniformly bounded in $L^{\infty}(X)$ and $\varphi_{t}$
is a family of Bergman metrics associated to a some tensor power $K_{X}^{*\otimes k}\rightarrow X.$
Embedding $X$ in the projectivization of $H^{0}(X,K_{X}^{*\otimes k}),$
identified with $\P^{N_{k}-1},$ this means that the corresponding
Kähler forms $\omega_{\varphi_{t}}$ are the restriction to $X$ of
$G_{t}^{*}\omega_{\text{FS}},$ for some curve $G_{t}\in\text{GL (}N_{k},\C),$
where $\omega_{\text{FS }}$denotes the Fubini-Study metric on $\P^{N_{k}-1}.$
Since the space of Bergman metrics at level $k$ - which is parameterized
by $\text{GL (}N_{k},\C)/U(N_{k})$ - is finite dimensional it seems
thus natural to expect that the inequality \ref{eq:rever Holder for geod-1}
could be deduced from the standard reverse Hölder inequality in $\R^{N_{k}}$(formula
\ref{eq:reverse H with N}). This can be made more precise as follows.
As shown in \cite[III]{c-d-s}, a subsequence of a family $G_{t}$
in $\text{GL (}N_{k},\C)$ - satisfying appropriate assumptions -
is asymptotic to a one-parameter subgroup of $\text{GL (}N_{k},\C),$
induced by a \emph{special test configuration} $\mathcal{X}$ for
$X.$ If these assumption would apply, the previous corollary would
follow from the fact that geodesic rays associated to a special test
configurations also satisfy a universal reverse Hölder inequality
of the form \ref{eq:rever Holder for geod-1} (as observed in Section
\ref{subsec:Test-configurations-and}). However, as discussed in \cite[Section 3.1]{d-s},
the assumptions in question have not yet been established for Aubin's
original continuity method. But they do hold in the singular setup
considered in \cite{c-d-s}, where the Kähler form $\omega$ is replaced
by the positive current defined by an appropriate anti-canonical divisor
$\Delta$ on $X.$ Anyhow, the partial $C^{0}-$estimate appears to
be wide open for singular Fano varieties $X$ (since its proof requires,
in particular, a uniform bound on the Sobolev constants; cf. \cite{zh0}). 

In the light of the previous discussion it seems natural to conjecture
that the geodesic ray $v_{t}$ appearing in the previous corollary
is induced by a special test configuration $\mathcal{X}$ and that
$\mathcal{X}$ computes the stability threshold $\delta(X)$ in the
sense of \cite[Thm 1.2]{l-x-z} (a related conjecture is proposed
in \cite{x}).

\subsection{Proof of Theorem \ref{thm:main intro} via log-concavity and moment
bounds}

In a nutshell, the idea of the proof of Theorem \ref{thm:main intro}
is to relate $d_{p}(u,0)$ to the moments of a somewhat hidden log-concave
measure on $\R$ and use the reverse Hölder inequality for random
variables with log-concave distribution \cite[App.III]{m-s}\cite{la},
known as the Kahane-Khinchin inequality. This leads to universal moment
bounds (Theorem \ref{thm:moment bounds}), from which Theorem \ref{thm:main intro}
is deduced. In general, $d_{p}(u,0)$ can also expressed directly
as the $p-$th moment of a probability measure on $\R,$ introduced
in \cite{bern2}. However, as pointed out in Section \ref{subsec:Failure-of-log-concavity},
this measure is not log-concave, nor is the analogous measure associated
to a test configuration, studied in \cite{wi,hi,bhj}, in general. 

\subsection{Effective openness on Fano varieties}

In another direction, the universal moment bounds alluded to above
yield universal effective openness results for complex singularity
exponents on Fano varieties. In order to state these results, we first
recall some standard notation. Given a compact Kähler manifold $X,$
denote by $\text{PSH \ensuremath{(X,\omega)}}$ the space of all $\omega-$plurisubharmonic
functions on $X.$ This is the space of all $u\in L^{1}(X)$ such
that $\omega_{u}\geq0,$ in the sense of currents on $X$ (where $\omega_{u}$
is the current defined as in formula \ref{eq:def of space H intro}).
The \emph{complex singularity exponent} \cite{d-k} of a function
$u\in\text{PSH}(X,\omega)$ also known as its \emph{log canonical
threshold}, is defined as the positive number
\[
c_{u}:=\sup\left\{ \gamma:\,Z_{u}(\gamma):=\int_{X}e^{-\gamma u}dV<\infty\right\} 
\]
for any fixed volume form $dV$ on $X.$ By the solution of Demailly-Kollar's
openness conjecture \cite{d-k} in \cite{f-j}, when $n\leq2$ and
\cite{bern2b,bern}, in general, 
\begin{equation}
Z_{u}(\gamma)<\infty\implies Z_{u}(\gamma+\epsilon)<\infty.\label{eq:openess}
\end{equation}
for some $\epsilon>0.$ More precisely, in \cite{bern2b,bern} a stronger
local result on a ball in $\C^{n}$ is established which yields an
effective bound on $\epsilon$ of the following form 
\begin{equation}
\epsilon<\frac{\gamma}{C_{(X,\omega)}Z_{u}(\gamma)}\label{eq:effective bo}
\end{equation}
for a constant $C_{(X,\omega)}$ depending on $(X,\omega)$ (by covering
$X$ with a finite number of coordinate balls). See also \cite{g-zh,g-z2,gu,n-w}
for more general effective results on the strong form of Demailly-Kollar's
openness conjecture. 

Here we will concerned with the case when $X$ is a Fano. More generally,
we will allow $X$ to be a Fano variety with (at worst) log terminal
singularities. Fix a measure $dV$ on $X$ corresponding to a locally
bounded metric on the anti-canonical line bundle $K_{X}^{*}$ of $X$
with positive curvature current, denoted by $\omega.$ The assumption
that $X$ has log terminal singularities ensures that $dV$ has finite
total mass. The following result involves the logarithmic derivative
\[
\frac{d\log Z_{u}(\gamma)}{d\gamma}=\frac{\int_{X}(-u)e^{-\gamma u}dV}{\int_{X}e^{-\gamma u}dV},\,\,\,\,\,\left(Z_{u}(\gamma):=\int_{X}e^{-\gamma u}dV\right)
\]
 when $c_{u}<1.$ 
\begin{thm}
\label{thm:intro}Let $X$ be a Fano variety, $\omega\in c_{1}(X)$
and assume that $u\in\text{PSH}(X,\omega)$ satisfies $u\leq0.$ If
$Z_{u}(\gamma)<\infty$ and $Z_{u}(1-\delta)=\infty$ for some $\delta>0,$
then $Z_{u}(\gamma+\epsilon)<\infty$ for any $\epsilon$ satisfying
\[
\epsilon<\frac{1}{A\frac{d\log Z_{u}(\gamma)}{d\gamma}+C\left(\gamma^{-2}+\delta^{-2}\right)}
\]
 for universal constants $A$ and $C$ (i.e. independent of $X$ and,
in particular, on the dimension of $X).$ 
\end{thm}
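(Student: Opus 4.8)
The plan is to reduce the claimed finiteness to a reverse-Hölder (Kahane--Khinchin type) bound for the moments of $-u$, and then to quote the universal moment bounds of Theorem~\ref{thm:moment bounds}. Assume $Z_u(\gamma)<\infty$ and introduce the probability measure and its moments
\[
\nu_\gamma:=\frac{e^{-\gamma u}\,dV}{Z_u(\gamma)},\qquad M_p:=\int_X(-u)^p\,d\nu_\gamma\quad(p\ge 0);
\]
since $u\le 0$ we have $-u\ge 0$ and $M_1=\frac{d\log Z_u(\gamma)}{d\gamma}$. By Tonelli's theorem,
\[
Z_u(\gamma+\epsilon)=Z_u(\gamma)\int_X e^{\epsilon(-u)}\,d\nu_\gamma=Z_u(\gamma)\sum_{p\ge 0}\frac{\epsilon^p}{p!}\,M_p ,
\]
so it suffices to prove an inequality of the form $M_p^{1/p}\le c_0\,p\bigl(A_0 M_1+C_0(\gamma^{-2}+\delta^{-2})\bigr)$ with $c_0$ an absolute constant. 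Indeed, using $p^p/p!\le e^p$ the last series is then dominated by the geometric series $\sum_{p\ge 0}\bigl(c_0 e\,\epsilon\,(A_0 M_1+C_0(\gamma^{-2}+\delta^{-2}))\bigr)^p$, which converges whenever $\epsilon<\bigl(c_0 e\,(A_0 M_1+C_0(\gamma^{-2}+\delta^{-2}))\bigr)^{-1}$; setting $A:=c_0 e A_0$ and $C:=c_0 e C_0$ gives precisely the asserted range for $\epsilon$.

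The inequality $M_p^{1/p}\le c_0\,p\,(A_0 M_1+C_0(\gamma^{-2}+\delta^{-2}))$ is where the ``hidden log-concavity'' enters; it is the content of Theorem~\ref{thm:moment bounds}, whose proof I would organize as follows. First, because $Z_u(\gamma)<\infty$ and $e^{-\gamma u}\,dV$ charges no pluripolar set, the complex Monge--Ampère equation $\omega_v^n=\frac{V}{Z_u(\gamma)}\,e^{-\gamma u}\,dV$ has a (sup-normalized) solution $v\in\text{PSH}(X,\omega)$ of finite energy, by the pluripotential theory of \cite{bbegz} (and \cite{d-g} in the singular case); by construction $\omega_v^n/V=\nu_\gamma$, so $M_p=V^{-1}\int_X(-u)^p\,\omega_v^n$. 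Second, the cocycle property of $\mathrm{Ric}$ gives $\mathrm{Ric}\,\omega_v=(1-\gamma)\omega+\gamma\omega_u$, which is $\ge 0$ since $0<\gamma<c_u\le 1-\delta$; this positivity is the input to a Prékopa/Berndtsson-type positivity-of-direct-images argument showing that the pushforward of $\omega_v^n/V$ under $-v$ is a \emph{log-concave} probability measure on $[0,\infty)$ --- the ``hidden'' log-concavity. The one-dimensional Kahane--Khinchin inequality for log-concave measures, whose constant is absolute (hence dimension-free), then bounds $\bigl(\int_X(-v)^p\,\omega_v^n/V\bigr)^{1/p}$ by $c_0\,p\int_X(-v)\,\omega_v^n/V$. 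Finally, replacing $-v$ by $-u$ and $\int_X(-v)\,\omega_v^n/V$ by $M_1$ introduces an additive error estimated by $\|u-v\|_{L^1(\omega_v^n)}$, hence by the Monge--Ampère energy of $v$ together with a Skoda-type integrability bound for $u$ (available because $c_u\ge\gamma>0$); once normalized by $\sup_X u\le 0$ these estimates are dimension-free and degrade only as $\gamma\downarrow 0$ or as $\gamma\uparrow c_u\le 1-\delta$, so the error is $\le C_0(\gamma^{-2}+\delta^{-2})$. This yields the moment bound with universal $A_0,C_0$.

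Combining the two steps proves the theorem, with $A,C$ independent of $X$ and of $\dim X$: here $Z_u$ is controlled directly against the measure $\nu_\gamma$, so no $(n+1)$-type factor appears, unlike in the $d_p$-estimate of Theorem~\ref{thm:main intro}. I expect the second step --- i.e.\ Theorem~\ref{thm:moment bounds} itself --- to be the main obstacle: one must make the log-concavity rigorous for a merely $\omega$-plurisubharmonic, non-smooth $u$ on a possibly singular Fano variety, which forces the use of non-pluripolar Monge--Ampère products and a pluripotential version of Berndtsson's positivity theorem, and one must make the transfer error genuinely effective and dimension-free, controlling $E(v)$ and the oscillation of $v$ by $C(\gamma^{-2}+\delta^{-2})$ uniformly over all Fano varieties. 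Granting Theorem~\ref{thm:moment bounds}, the reduction above --- solving the Monge--Ampère equation, invoking Kahane--Khinchin, and summing the exponential series --- is short and formal.
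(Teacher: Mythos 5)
Your reduction has a genuine gap. You claim that it suffices to prove a moment bound of the form $M_p^{1/p}\le c_0\,p\bigl(A_0M_1+C_0(\gamma^{-2}+\delta^{-2})\bigr)$ with $c_0$ an absolute constant, and then assert that this is ``the content of Theorem~\ref{thm:moment bounds}.'' It is not. Theorem~\ref{thm:moment bounds} gives $\bigl(\int(-u)^p\mu_{\gamma u}\bigr)^{1/p}\le A_p\int(-u)\mu_{\gamma u}+B_p(\gamma^{-1}+(1-\gamma)^{-1})$ with constants $A_p,B_p$ depending on $p$ in an unspecified way; the proof is an induction on $p$ where the constants compound at each step (via the Bell-polynomial expansion in Lemma~\ref{lem:element} together with the Kahane--Khinchin constant $C_p$), and nothing in that argument pins the growth down to linear in $p$. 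Without $A_p\lesssim p$, your exponential-series majorization $\sum_p\epsilon^p M_p/p!\le\sum_p(c_0e\epsilon(\cdots))^p$ fails, so the core step of your proposal is unsupported.

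Your justification of the moment bound is also built on a claim the paper explicitly contradicts. You argue that because $\mathrm{Ric}\,\omega_v=\gamma\omega_u+(1-\gamma)\omega\ge 0$, a Prékopa/Berndtsson positivity argument makes the pushforward of $\omega_v^n/V$ under $-v$ log-concave on $\R$. But Section~\ref{subsec:Failure-of-log-concavity} of the paper shows precisely that such pushforwards are \emph{not} log-concave in general, and the remark after Theorem~\ref{Thm:Z as log concave} emphasizes that the pushforward of $dV_X$ under $u$ is not log-concave either. The ``hidden'' log-concavity in the paper lives elsewhere: one lifts $u$ to the log-homogeneous function $\psi=u+\log r^2$ on the total space $Y$ of $K_X$, and the log-concave measure $\nu_0$ is the pushforward of $dV_Y=e^{-r^2}i^{(n+1)}\Omega\wedge\bar\Omega$ under $\psi$, with $Z_u(\gamma)=G(\gamma)\int_\R e^{-t\gamma}\nu_0$ for a nontrivial correction factor $G(\gamma)$ (Theorem~\ref{Thm:Z as log concave}). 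Your construction never leaves $X$, so it cannot capture this.

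Finally, the paper does not use your exponential-series route at all. The actual proof of Theorem~\ref{thm:intro} (stated in the body as Theorem~\ref{thm:effective text}) establishes only a second-moment inequality, namely $\bigl(d^2\log Z_u/d\gamma^2\bigr)^{1/2}\le 2C_2\,d\log Z_u/d\gamma+B(\gamma^{-1}+(1-\gamma)^{-1})$ with $C_2\le 2$, by applying the effective Kahane--Khinchin inequality of \cite{mu} to the log-concave measure $\nu_\gamma$ on $\R$. Writing $g(\gamma):=d\log Z_u/d\gamma$, this becomes a Riccati-type differential inequality $dg/d\gamma\le Ag^2$ on $[\gamma_0,c_u[$ once $g$ is large, and integrating $d(g^{-1})/dt\le A$ (with $t=c_u-\gamma$, using $g^{-1}\to 0$ as $\gamma\uparrow c_u$) yields $g\ge (A(c_u-\gamma))^{-1}-b_0$; the case $g<b_0$ is handled by translating $u\mapsto u-b_0$. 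This ODE argument entirely avoids the need to control the full moment sequence $(M_p)_{p\ge 1}$, which is exactly the piece your proposal is missing. So your route is both genuinely different from the paper's and, as written, incomplete: to make it work you would have to reprove Theorem~\ref{thm:moment bounds} with explicit linear-in-$p$ constants, and you would still need the cone-lift log-concavity rather than the pushforward you propose.
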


More precisely, it will be shown that $A$ can be taken arbitrarily
close to $16$ (at the expense of increasing $C),$ by using the effective
Kahane-Khinchin inequality in \cite{mu}. Moreover, for a fixed value
of $A,$ the constant $C$ could also readily be estimated. 

The previous theorem may be reformulated as the following universal
lower bound:

\begin{equation}
\frac{d\log Z_{u}(\gamma)}{d\gamma}\geq\frac{1}{A}\frac{1}{(c_{u}-\gamma)}-B\left(\gamma^{-2}+(1-c_{u})^{-2}\right)\,\,\,\,\,(C=AB).\label{eq:lower bd}
\end{equation}
Integrating this bound yields the following variant of the effective
bound \ref{eq:effective bo} in the present setup, which depends on
$Z_{u}(\gamma)$ and $\gamma$ in a universal manner.
\begin{cor}
\label{cor:intro-1}Let $X$ be a Fano variety and assume that $u\in\text{PSH}(X,\omega)$
satisfies $u\leq0$ and that $\int_{X}dV=1.$ If $Z_{u}(\gamma)<\infty$
and $Z_{u}(1-\delta)=\infty$ for some $\delta>0,$ then $Z_{u}(\gamma+\epsilon)<\infty$
for any $\epsilon$ satisfying 
\[
\epsilon<\frac{e^{-a\left(\gamma^{-1}+\gamma\delta^{-2}\right)}}{Z_{u}(\gamma)^{A}}
\]
 for universal constants $A$ and $a.$ 
\end{cor}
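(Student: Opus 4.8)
The plan is to deduce Corollary \ref{cor:intro-1} from the differential inequality \eqref{eq:lower bd} of Theorem \ref{thm:intro} by integration, as the phrasing of the statement already suggests. First I would record the three standing facts that make this work: (i) $\gamma\mapsto\log Z_{u}(\gamma)$ is convex (by H\"older's inequality) and finite on $[0,c_{u})$, where $c_{u}$ is the complex singularity exponent of $u$; (ii) the hypothesis $Z_{u}(1-\delta)=\infty$ forces $c_{u}\leq 1-\delta<1$, hence $(1-c_{u})^{-2}\leq\delta^{-2}$; and (iii) the normalization $\int_{X}dV=1$ together with $u\leq 0$ gives $Z_{u}\geq Z_{u}(0)=1$ on $[0,c_{u})$, so $\log Z_{u}\geq 0$ there. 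One also needs $\gamma<c_{u}$: this is immediate from \eqref{eq:openess}, or, alternatively, it falls out of the integration itself, since in the degenerate case $\gamma=c_{u}$ integrating \eqref{eq:lower bd} over $(\gamma/2,c_{u})$ would force $\log Z_{u}(c_{u})=+\infty$ (the integral $\int^{c_{u}}\frac{dt}{c_{u}-t}$ diverges), contradicting $Z_{u}(\gamma)<\infty$.

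The core step is to integrate \eqref{eq:lower bd} over the interval $[\gamma/2,\gamma]$. Since $\log Z_{u}$ is convex and finite there it is absolutely continuous and equals the integral of its (a.e.\ defined) derivative, so the left-hand side integrates to $\log Z_{u}(\gamma)-\log Z_{u}(\gamma/2)$, while on the right $\int_{\gamma/2}^{\gamma}\frac{dt}{c_{u}-t}=\log\frac{c_{u}-\gamma/2}{c_{u}-\gamma}$ and $\int_{\gamma/2}^{\gamma}\big(t^{-2}+(1-c_{u})^{-2}\big)\,dt\leq \gamma^{-1}+\gamma\delta^{-2}$. Rearranging, and then discarding $-\log Z_{u}(\gamma/2)\leq 0$, using $c_{u}-\gamma/2>\gamma/2$ (which holds because $\gamma<c_{u}$), and invoking the elementary bound $-\log\gamma\leq\gamma^{-1}$ valid for $0<\gamma<1$, I get
\[
\log(c_{u}-\gamma)\;\geq\;-A\log Z_{u}(\gamma)-a\big(\gamma^{-1}+\gamma\delta^{-2}\big),
\]
for a constant $a$ depending only on $A$ and on $C=AB$ from Theorem \ref{thm:intro}, hence universal. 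Exponentiating gives $c_{u}-\gamma\geq e^{-a(\gamma^{-1}+\gamma\delta^{-2})}/Z_{u}(\gamma)^{A}$, so any $\epsilon$ strictly below this quantity satisfies $\gamma+\epsilon<c_{u}$, i.e.\ $Z_{u}(\gamma+\epsilon)<\infty$, which is the assertion of the corollary.

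I do not expect a serious obstacle here: the only points demanding care are the legitimacy of integrating an almost-everywhere differential inequality for a convex function (routine, via absolute continuity of convex functions on open intervals) and the bookkeeping of the endpoint case $\gamma=c_{u}$ discussed above. The choice of lower endpoint $\gamma/2$ is not optimal — any fixed fraction of $\gamma$ yields a bound of the same shape, and one could trade off the $\log(c_{u}-\gamma/2)$ term against the $\gamma^{-1}$ term to improve the constant $a$ slightly — but this is cosmetic. The constant $A$ is exactly the one in Theorem \ref{thm:intro} (which, by the remark there, can be taken close to $16$), and $a$ is then an explicit function of $A$ and the universal constant $C$.
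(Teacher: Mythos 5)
Your proposal is correct and follows essentially the same route as the paper: integrate the differential inequality from Theorem \ref{thm:intro} over $[\gamma/2,\gamma]$, discard $\log Z_u(\gamma/2)\geq 0$ using $u\leq 0$ and $\int_X dV=1$, bound $c_u-\gamma/2>\gamma/2$, and absorb the resulting $\log(\gamma/2)$ term into the $\gamma^{-1}$ error via $-\log\gamma\leq\gamma^{-1}$. The only cosmetic difference is that the paper states the rearranged conclusion as a lower bound on $Z_u(\gamma)$ in terms of $c_u-\gamma$, whereas you present the equivalent lower bound on $c_u-\gamma$ directly.
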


It should be stressed that $\omega-$plurisubharmonic functions satisfying
$c_{u}<1$ exist on all Fano varieties $X,$ that are not \emph{weakly
exceptional} (in the sense that $\text{lct \ensuremath{(X)\geq1}}$,
where $\text{lct \ensuremath{(X)} }$denotes the global log canonical
threshold of $-K_{X},$ which coincides with the alpha invariant of
$-K_{X})$ \cite{c-p-s}. For example, among the non-singular Fano
surfaces (i.e. del Pezzo surfaces) only those with degree one, and
without anti-canonical cuspidal curves, are weakly exceptional \cite{che}.
Furthermore, according to a conjecture of Cheltsov \cite{ch-1}, only
two of the 105 deformation families of non-singular Fano threefolds
contain weakly exceptional members. 

To close the circle, an application of the lower bound \ref{eq:lower bd}
to K-unstable Fano varieties is  given in the final section of the
paper. 

\subsection{Acknowledgments}

I am grateful to Bo Berndtsson for countless discussions over the
years on the complex Brunn-Minkowski theory developed in \cite{bern1,bern1b}.
Also thanks for feedback from Rolf Andreasson, Sébastien Boucksom,
Ivan Cheltsov, Tamas Darvas, Mattias Jonsson, Bo'az Klartag and Gabor
Székelyhidi. This work was supported by a Wallenberg Scholar grant
from the Knut and Alice Wallenberg foundation.

\section{Preliminaries}

Throughout the paper the precise values of the constants $C$ etc
appearing in the inequalities may change from line to line.

\subsection{\label{subsec:The-space-of}The space of Kähler potentials}

Let $(X,\omega)$ be a compact Kähler manifold of dimension $n$ and
denote by $\mathcal{H}(X,\omega)$ the corresponding space of all
Kähler potentials $u$ on $X:$
\[
\mathcal{H}(X,\omega):=\left\{ u\in C^{\infty}(X):\,\omega_{u}:=\omega+dd^{c}u>0\right\} \,\,\,\,\left(dd^{c}u:=\frac{i}{2\pi}\partial\bar{\partial}u\right).
\]
Following \cite{da} the Finsler $L^{p}-$metric on $\mathcal{H}(X,\omega)$
defined by

\[
\left(\int_{X}\left|\frac{du_{t}}{dt}\right|^{p}\frac{\omega_{u_{t}}^{n}}{V}\right)^{1/p},\,\,\,V:=\int_{X}\omega^{n}
\]
induces a metric $d_{p}$ on $\mathcal{H}(X,\omega)$ satisfying the
following inequalities, for constants $c_{n,p}$ only depending on
$n$ and $p$ (see \cite[Thm 3]{da}):
\begin{equation}
c_{n,p}^{-1}\left(\int|u_{0}-u_{1}|^{p}\left(\frac{\omega_{u_{0}}^{n}}{V}+\frac{\omega_{u_{1}}^{n}}{V}\right)\right)^{1/p}\leq d_{p}(u_{0},u_{1})\leq c_{n,p}\left(\int|u_{0}-u_{1}|^{p}\left(\frac{\omega_{u_{0}}^{n}}{V}+\frac{\omega_{u_{1}}^{n}}{V}\right)\right)^{1/p}\label{eq:Darvas}
\end{equation}
The first inequality, applied to $(u_{0},u_{1})=(0,u),$ implies that
\begin{equation}
\left(\int|u|^{p}\frac{\omega^{n}}{V}\right)^{1/p}\leq c_{n,p}d_{p}(u,0)\label{eq:L p norm leq d p}
\end{equation}
 and, as a consequence, 
\begin{equation}
\left|\sup_{X}u\right|\leq c_{n,1}d_{1}(u,0)+C_{\omega}\label{eq:bounds of sup in terms of d one}
\end{equation}
(see \cite[Cor 4]{da}). In the case when $(u_{0},u_{1})=(0,u)$ and
$u\leq0$ the following more precise estimates hold:
\begin{lem}
\label{lem:darvas}For $u\in\mathcal{H}(X,\omega)$ satisfying $\sup u\leq0$
\begin{equation}
d_{p}(u,0)\leq\left(\int|u|^{p}\left(\frac{\omega_{u}^{n}}{V}\right)\right)^{1/p},\,\,\,\,\int|u|\frac{\omega_{u}^{n}}{V}\leq(n+1)d_{1}(u,0).\label{eq:darvas two}
\end{equation}
\end{lem}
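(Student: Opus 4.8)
The two assertions both rely on the monotonicity of $d_p$-distances under decreasing paths together with explicit computations along the linear segment $u_s := s u$, $s \in [0,1]$, which connects $0$ to $u$ inside $\mathcal{H}(X,\omega)$ (this segment stays in $\mathcal{H}(X,\omega)$ since $\omega_{su} = (1-s)\omega + s\omega_u > 0$). The key structural fact I would invoke is that for a \emph{decreasing} family, the length of such a path in the $d_p$-Finsler metric is computed without absolute values: since $u \le 0$, the path $s \mapsto su$ has $\tfrac{d}{ds}(su) = u \le 0$, so $\left|\tfrac{d}{ds}(su)\right| = -u$ pointwise, and moreover the linear segment between two ordered potentials is known to be a $d_p$-geodesic (this is the standard fact from \cite{da} that decreasing segments are metric geodesics). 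Hence
\[
d_p(u,0) = \int_0^1 \left( \int_X (-u)^p \frac{\omega_{su}^n}{V} \right)^{1/p} ds.
\]

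For the first inequality, I would bound the inner integral by replacing the measure $\omega_{su}^n/V$ by $\omega_u^n/V$. This is \emph{not} a pointwise inequality of measures, but one should argue via the concavity/monotonicity of the relevant energy-type functional: expanding $\omega_{su}^n = ((1-s)\omega + s\omega_u)^n$ binomially and integrating $(-u)^p$ against each mixed term, one compares $\int (-u)^p \omega^{n-k}\wedge\omega_u^k$ for varying $k$. Integration by parts (moving $dd^c u$ factors, using $u \le 0$ so that $(-u)^p$ is still a nonnegative function whose complex Hessian can be controlled) should show each such mixed integral is dominated by $\int(-u)^p \omega_u^n$ — this is the monotonicity-in-$k$ phenomenon familiar from Aubin--Mabuchi energy estimates. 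Granting $\int_X (-u)^p \omega_{su}^n/V \le \int_X (-u)^p \omega_u^n/V$ for all $s$, the $ds$-integral above is at most $\left(\int_X |u|^p \omega_u^n/V\right)^{1/p}$, which is the claim.

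For the second inequality, take $p=1$ and run the same computation from the other side. Now I want a lower bound $d_1(u,0) \ge \tfrac{1}{n+1}\int_X (-u)\,\omega_u^n/V$. Using $d_1(u,0) = \int_0^1 \int_X (-u)\,\omega_{su}^n/V\, ds$ and expanding $\omega_{su}^n$ binomially, the integrand is $\sum_{k=0}^n \binom{n}{k} s^k(1-s)^{n-k} \int_X (-u)\,\omega^{n-k}\wedge\omega_u^k / V$. Here the relevant monotonicity goes the favorable way after integrating in $s$: $\int_0^1 \binom{n}{k}s^k(1-s)^{n-k}\,ds = \tfrac{1}{n+1}$ for every $k$, so $d_1(u,0) = \tfrac{1}{n+1}\sum_{k=0}^n \int_X (-u)\,\omega^{n-k}\wedge\omega_u^k/V$. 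Each term with $k \le n$ is $\ge \int_X(-u)\,\omega_u^n/V$ by the same integration-by-parts comparison (but now used in the direction that \emph{increasing} the number of $\omega_u$ factors decreases the integral of $(-u)\ge 0$, equivalently the term with the most $\omega$ factors is the largest), which gives $d_1(u,0) \ge \tfrac{1}{n+1}(n+1)\int_X(-u)\omega_u^n/V$ — wait, I must be careful about the direction; the correct bound needed is just that each mixed term is at least $\int_X (-u)\omega_u^n/V$, which follows because $\int_X(-u)(\omega_{su}^n - \omega_u^n) = \int_X(-u)\,dd^c(\cdots) \ge 0$ by integrating by parts and using $\omega_u \ge \omega_{su}$-type positivity. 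Summing over $k$ yields exactly the factor $n+1$.

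The main obstacle is the sign bookkeeping in the integration-by-parts comparisons of the mixed Monge--Ampère integrals $\int_X (-u)^p\,\omega^{n-k}\wedge\omega_u^k$ — one must verify that $(-u)^p$ (nonnegative, but with no sign on its complex Hessian for $p>1$) can still be integrated by parts against $dd^c u = \omega_u - \omega$ to land on the correct side of each inequality, and that the boundary-free integration by parts is legitimate for smooth $u$ on compact $X$. For $p=1$ this is clean since $(-u)$ is itself $\omega$-plurisuperharmonic-adjacent and the computation is the classical one behind \cite[Cor 4]{da}; for general $p$ one reduces the first inequality to the $p=1$ type estimate applied to a suitable convex function of $u$, or simply cites the comparison of energies in \cite{da}. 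I would present the $p=1$ second inequality in full detail and note that the first inequality for general $p$ follows from the same integration-by-parts scheme applied to $(-u)^p$, or alternatively by a direct convexity argument along the path.
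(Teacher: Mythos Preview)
Your approach to the first inequality is essentially correct and gives a direct argument for what the paper simply cites from \cite[Lemma~4.1]{da}: the linear path $s\mapsto su$ furnishes an \emph{upper} bound on $d_p(u,0)$ (you do not need it to be a $d_p$-geodesic for this direction), and the integration-by-parts computation
\[
\int_X(-u)^p\,\omega_u^{k}\wedge\omega^{n-k} - \int_X(-u)^p\,\omega_u^{k-1}\wedge\omega^{n-k+1}
= p\int_X(-u)^{p-1}\,du\wedge d^c u\wedge\omega_u^{k-1}\wedge\omega^{n-k} \ge 0
\]
shows that $k\mapsto\int_X(-u)^p\,\omega_u^{k}\wedge\omega^{n-k}$ is \emph{increasing}, so the convex combination $\int_X(-u)^p\omega_{su}^n$ is bounded above by the $k=n$ term $\int_X(-u)^p\omega_u^n$.

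This very monotonicity, however, is where your argument for the second inequality goes wrong. You assert that ``each mixed term is at least $\int_X(-u)\,\omega_u^n/V$'', i.e.\ that increasing the number of $\omega_u$ factors \emph{decreases} the integral; the display above (with $p=1$) says exactly the opposite. In fact $\int_X(-u)\,\omega_u^n$ is the \emph{largest} of the mixed integrals, so your argument would yield $d_1(u,0)\ge\int_X(-u)\,\omega_u^n/V$ without the factor $n+1$, and that inequality is false in general (the reverse holds). The paper's route is much simpler: once one has
\[
d_1(u,0) = -\mathcal{E}(u) = \frac{1}{n+1}\sum_{k=0}^n \int_X(-u)\,\omega_u^k\wedge\omega^{n-k}/V
\]
(this is \cite[Cor~4.14]{da}; your Beta-integral computation along the linear path does reproduce the right-hand side as the $d_1$-length of that path), one merely observes that all $n+1$ summands are nonnegative because $-u\ge 0$. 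Hence any single one of them, in particular the $k=n$ term $\int_X(-u)\,\omega_u^n/V$, is bounded by their sum $(n+1)d_1(u,0)$. No comparison between the mixed terms is needed.
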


\begin{proof}
The first inequality is contained in \cite[Lemma 4.1]{da}. To prove
the second one, note that since $-u\geq0$ we have that $\int|u|\frac{\omega_{u}^{n}}{V}\leq-(n+1)\mathcal{E}(u),$
where
\begin{equation}
\mathcal{E}(u):=\frac{1}{V(n+1)}\int_{X}\sum_{j=0}^{n}(-u)\omega_{u}^{j}\wedge\omega^{n-j}\label{eq:primitive appears}
\end{equation}
 (which is the unique primitive of the one-form on $\mathcal{H}(X,\omega)$
defined by $u\mapsto\omega_{u}^{n}/V$ satisfying $\mathcal{E}(u)=0).$
Finally, by \cite[Cor 4.14]{da}, $-\mathcal{E}(u)=d_{1}(u,0),$ when
$\sup_{X}u\leq0.$ 
\end{proof}

\subsubsection{\label{subsec:Metric-completions-and}Metric completions, finite
energy spaces and geodesics}

Denote by $\text{PSH }(X,\omega)$ the subspace of $L^{1}(X)$ consisting
of all\emph{ $\omega-$plurisubharmonic ($\omega-$psh)} functions
on $X,$ i.e. all strongly upper-semicontinuous functions $u$ such
that $\omega_{u}\geq0$ holds in the sense of currents \cite{g-z}.
We will denote by $\omega_{u}^{n}$ the \emph{non-pluripolar Monge-Ampère
measure} of $u$ \cite{g-z2}. As shown in \cite{da} - answering
a conjecture of Guedjs when $p=2$ - the metric completion $\overline{(\mathcal{H}(X,\omega),d_{p})}$
of the metric space $(\mathcal{H}(X,\omega),d_{p})$ may be identified
with the finite energy space 
\[
\mathcal{E}^{p}(X,\omega):=\left\{ u\in\text{PSH }(X,\omega):\,\int_{X}\omega_{u}^{n}=\int_{X}\omega^{n},\,\,\int_{X}|u|^{p}\omega_{u}^{n}<\infty\right\} ,
\]
 introduced in \cite{g-z2} and the inequalities \ref{eq:Darvas}
hold on all of $\mathcal{E}^{p}(X,\omega).$ Moreover, the spaces
$\mathcal{E}^{p}(X,\omega)$ are strictly decreasing wrt $p.$ 

Any two elements $u_{0},u_{1}$ in $\mathcal{E}^{1}(X,\omega)$ can
be connected by a canonical path $u_{t}$ called a \emph{finite energy
geodesic} in \cite{da,d-l} and a \emph{psh geodesic} in \cite{b-b-j},
defined by the following envelope:
\begin{equation}
u_{t}(x):=\sup\left\{ v_{t}(x):v_{t}\,\,\text{is a subgeodesic}\,\,\,\limsup_{t\rightarrow0}v_{t}\leq u_{0},\,\limsup_{t\rightarrow1}v_{t}\leq u_{0}\right\} .\label{eq:psh geodes}
\end{equation}
 We recall that a \emph{subgeodesic} $v_{t}$ is defined as a curve
in $\text{PSH }(X,\omega)$ with the following property: complexifying
$t$ the corresponding $i\R-$invariant function $V(x,t):=v_{\text{\ensuremath{\Re}}t}(x)$
on $X\times]0,1[\times i\R$ is in $\text{PSH}\left(X\times]0,1[\times i\R,\omega\right),$
using the same notation $\omega$ for the pull-back of $\omega$ to
the product $X\times(]0,1[\times i\R).$ 

Recall that, given a metric space $(M,d),$ a curve $u_{t}$ connecting
two given points $u_{0}$ and $u_{1}$ in $M$ is said to be a\emph{
$d-$geodesic }(also known as a \emph{constant speed geodesic}) if
\[
d(u_{t},u_{0})=td(u_{1},u_{0}),\,\,\text{when \ensuremath{t\in[0,1]}}.
\]
 As shown in \cite{da}, building on \cite{bern2}, the psh geodesic
$u_{t}$ connecting any two given elements $u_{0},u_{1}$ in $\mathcal{E}^{p}(X,\omega)$
is a $d_{p}-$geodesic. When $p>1$ this is the unique $d_{p}-$geodesic
connecting $u_{0},u_{1}$ (by \cite[Thm 3.3]{d-l}). Given a $d_{p}-$geodesic
$v_{t}$ we will use the notation 
\[
\left\Vert \dot{v}\right\Vert _{p}:=d_{p}(v_{t},v_{0})/t,\,\,t>0
\]
which is independent of $t.$ In the terminology of \cite{d-l} $\left\Vert \dot{v}\right\Vert _{p}$
is the distance between the $d_{p}-$geodesic $v_{t}$ and the constant
geodesic $0$ wrt the cordal metric on the space of $d_{p}-$geodesic
rays, introduced in \cite{d-l}.

\subsection{\label{subsec:The-Fano-setup}The Fano setup}

Henceforth $X$ will be assumed to be a compact \emph{Fano manifold}.
This means that the anti-canonical line bundle $K_{X}^{*}$ is ample.
Equivalently, $X$ admits a volume form $dV_{X}$ with positive Ricci
curvature, i.e. 
\[
\omega:=\text{Ric \ensuremath{dV_{X}}}\,\,\,(:=-dd^{c}\log dV_{X})
\]
 defines a Kähler form on $X$ in the first Chern class $c_{1}(X)$
of $X.$ Conversely, any Kähler form $\omega$ in $c_{1}(X)$ may
be expressed as in the previous equation and the corresponding volume
form $dV_{X}$ is uniquely determined by $\omega$ under the normalization
condition $\int_{X}dV_{X}=1,$ which will henceforth be assumed. Denote
by $V$ the \emph{volume} of $X:$ 
\[
V:=c_{1}(X)^{n}\left(=\int_{X}\omega^{n}\right)
\]
 To $u$ in $\mathcal{H}(X,\omega)$ and $\gamma\in]0,\infty[$ we
attach the following probability measure on $X:$ 
\[
\mu_{\gamma u}:=\frac{e^{-\gamma u}dV_{X}}{\int_{X}e^{-\gamma u}dV_{X}}.
\]
 (which only depends on the Kähler form $\omega+dd^{c}u$ when $\gamma=1).$
In terms of this measure, the \emph{twisted Ricci potential} $\rho_{u,\gamma}$
of a Kähler potential $u$ (definined in formula\ref{eq:def of twisted Ricci intro})
may, alternatively, be defined by the relation
\begin{equation}
e^{\rho_{u,\gamma}}\frac{\omega_{u}^{n}}{V}=\mu_{\gamma u}.\label{eq:def of twisted Ricci pot}
\end{equation}
$\rho_{u,1}$ is independent of $\omega$ and is usually simply called
the \emph{Ricci potential of $\omega_{u},$} denoted by $\rho_{\omega_{u}}.$
More generally, the twisted Ricci potential $\rho_{u,\gamma}$ is
well-defined for any $u\in\text{PSH }(X,\omega)$ satisfying 
\begin{itemize}
\item the non-pluripolar Monge-Ampère measure $\omega_{u}^{n}$ is, locally,
absolutely continious wrt Lesbegue measure
\item $e^{-\gamma u}dV_{X}$ has finite total mass.
\end{itemize}

\section{\label{subsec:Lifting-to-the}Log-concavity}

Fix $u\in\mathcal{H}(X,\omega)$ such that $\sup_{X}u\leq0$ and consider
the following function on $\R:$ 
\[
Z_{u}(\gamma):=\int_{X}e^{-\gamma u}dV_{X}.
\]
The key analytic ingredient in the proofs in the coming sections is
the following result, expressing $Z_{u}(\gamma)$ in terms of the
Laplace transform of a \emph{log concave measure }$\nu_{0}$ on $\R.$
That is to say that $\nu_{0}$ is either a Dirac mass or $\nu_{0}$
is absolutely continuous wrt Lesbegue measure on $\R$ and the logarithm
of its density is concave.
\begin{thm}
\label{Thm:Z as log concave}For any $u\in\mathcal{H}(X,\omega)$
such that $\sup_{X}u\leq0$ and any $\gamma\in]0,1[$
\begin{equation}
Z_{u}(\gamma)=G(\gamma)\int_{\R}e^{-t\gamma}\nu_{0}\label{eq:Z in terms of nu 1}
\end{equation}
 for a log-concave measure $\nu_{0}$ on $\R$ (depending on $u)$
and a smooth function $G(\gamma)$ on $[0,1[$ (independent of $u)$.
Moreover, the log-concave probability measures $\nu_{\gamma}:=e^{-t\gamma}\nu_{0}/\int_{\R}e^{-t\gamma}\nu_{0}$
satisfy the following inequality:
\begin{equation}
\int_{\R}|t|\nu_{\gamma}\leq\frac{d\log Z_{u}(\gamma)}{d\gamma}+C\left(\gamma^{-1}+(1-\gamma)^{-1}\right)\label{eq:ineq in thm log}
\end{equation}
for a constant $C$ independent of $u$ and $\gamma.$
\end{thm}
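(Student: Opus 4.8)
The plan is to produce the measure $\nu_{0}$ through a single application of Berndtsson's positivity theorem for direct image bundles (the complex Pr\'ekopa / Brunn--Minkowski inequality), and then to read off \ref{eq:Z in terms of nu 1} and \ref{eq:ineq in thm log} by elementary manipulations. Write $dV_{X}=e^{-\psi_{0}}$, where $\psi_{0}$ is the weight on $-K_{X}$ with $dd^{c}\psi_{0}=\omega$, and introduce the auxiliary function
\[
\Theta(s):=\int_{X}e^{-\max(0,\,u(x)+s)}\,dV_{X},\qquad s\in\R .
\]
Since $u$ is bounded, $\Theta(s)=1$ for $s\leq-\sup_{X}u$ while $\Theta(s)=e^{-s}\int_{X}e^{-u}dV_{X}$ for $s$ large; hence $0<\Theta\leq1$, $\Theta$ is $1$-Lipschitz and decreasing (strictly so on $[-\sup_{X}u,\infty)$), and $\int_{\R}e^{\gamma s}\Theta(s)\,ds<\infty$ for $\gamma\in\,]0,1[$.

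First I would prove that $\Theta$ is log-concave. To this end, consider $X\times\C^{*}$ with projections $\pi$ to $X$ and $p$ to $\C^{*}$, and the weight $\Phi(x,z):=\psi_{0}(x)+\max\bigl(0,\,u(x)+\log|z|^{2}\bigr)$ on $\pi^{*}(-K_{X})$. As $u$ is $\omega$-psh and $\log|z|^{2}$ is pluriharmonic, both $0$ and $u(x)+\log|z|^{2}$ are $\pi^{*}\omega$-psh, hence so is their maximum, so $\Phi$ is a continuous, semipositively curved psh weight on $\pi^{*}(-K_{X})$. The relative canonical bundle of $p$ being $\pi^{*}K_{X}$, the direct image $p_{*}\bigl(K_{X\times\C^{*}/\C^{*}}\otimes\pi^{*}(-K_{X})\bigr)$ is the trivial line bundle with fibre metric $z\mapsto\int_{X}e^{-\Phi(\cdot,z)}=\Theta(\log|z|^{2})$; Berndtsson's theorem gives that this metric is semipositively curved, i.e. $-\log\Theta(\log|z|^{2})$ is subharmonic on $\C^{*}$, which for a rotationally invariant function means precisely that $s\mapsto\log\Theta(s)$ is concave on $\R$. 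The non-smoothness of $\max(0,\cdot)$ I would remove by approximating it from above by smooth convex increasing $\chi_{\varepsilon}$ with $0\leq\chi_{\varepsilon}'\leq1$ (for which $\psi_{0}+\chi_{\varepsilon}(u(x)+\log|z|^{2})$ is still a psh weight, since $\chi'(dd^{c}w+\omega)+(1-\chi')\omega\geq0$ whenever $w$ is $\pi^{*}\omega$-psh), applying Berndtsson to each and letting $\varepsilon\to0$. I would then set $\nu_{0}:=(s\mapsto-s)_{*}\bigl(\Theta(s)\,ds\bigr)$: a log-concave measure on $\R$, depending on $u$, here absolutely continuous with positive density $t\mapsto\Theta(-t)$ (so the Dirac alternative in the definition of log-concavity is not needed in the present smooth setting, but remains available for the later singular extensions).

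Next I would derive the identity: by Tonelli and a split of the inner integral at $s=-u(x)$,
\[
\int_{\R}e^{-t\gamma}\,\nu_{0}=\int_{\R}e^{\gamma s}\Theta(s)\,ds=\int_{X}\Bigl(\int_{\R}e^{\gamma s}e^{-(u(x)+s)_{+}}\,ds\Bigr)dV_{X}=\int_{X}\frac{e^{-\gamma u(x)}}{\gamma(1-\gamma)}\,dV_{X}=\frac{Z_{u}(\gamma)}{\gamma(1-\gamma)},
\]
which is \ref{eq:Z in terms of nu 1} with $G(\gamma):=\gamma(1-\gamma)$, smooth on $[0,1[$ and independent of $u$. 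For the moment bound I would differentiate $\log Z_{u}(\gamma)=\log\gamma+\log(1-\gamma)+\log\int_{\R}e^{-t\gamma}\nu_{0}$, using $\frac{d}{d\gamma}\log\int e^{-t\gamma}\nu_{0}=-\int_{\R}t\,\nu_{\gamma}$, to get $\int_{\R}t\,\nu_{\gamma}=\gamma^{-1}-(1-\gamma)^{-1}-\frac{d\log Z_{u}(\gamma)}{d\gamma}$, and separately, using $\Theta\equiv1$ on $(-\infty,0]$ together with $\int_{-\infty}^{0}(-s)e^{\gamma s}\,ds=\gamma^{-2}$ and $\int_{-\infty}^{0}e^{\gamma s}\,ds=\gamma^{-1}$,
\[
\int_{\R}t_{+}\,\nu_{\gamma}=\frac{\int_{-\infty}^{0}(-s)e^{\gamma s}\Theta(s)\,ds}{\int_{\R}e^{\gamma s}\Theta(s)\,ds}\leq\frac{\gamma^{-2}}{\gamma^{-1}}=\gamma^{-1}.
\]
Then $\int_{\R}|t|\,\nu_{\gamma}=-\int_{\R}t\,\nu_{\gamma}+2\int_{\R}t_{+}\,\nu_{\gamma}\leq\frac{d\log Z_{u}(\gamma)}{d\gamma}+\gamma^{-1}+(1-\gamma)^{-1}$, which is \ref{eq:ineq in thm log} with $C=1$.

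The hard part is the one invocation of Berndtsson's positivity of direct images in the first step, for the merely \emph{continuous} weight $e^{-\Phi}$ on the projective fibration $X\times\C^{*}\to\C^{*}$; the regularization described above should reduce it to the standard smooth case, and alternatively one can appeal to a version of the theorem valid for semipositively curved singular metrics. Everything else — the Fubini identity, the differentiation under the integral sign (legitimate since $\nu_{\gamma}$ has exponentially decaying tails for $\gamma\in\,]0,1[$), and the one-sided estimate of $\int_{\R}t_{+}\,\nu_{\gamma}$, which uses only the flatness of $\Theta$ on the negative half-line forced by $\sup_{X}u\le0$ — is routine.
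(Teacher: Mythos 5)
Your argument is correct, and it reaches the same conclusion by a cleaner and more elementary route than the paper, while resting on the same core tool.

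The paper obtains the log-concave measure by lifting to the total space $Y$ of $K_{X}\rightarrow X$: one sets $\psi=u+\log r^{2}$, takes the Gaussian-type weight $\psi_{0}=r^{2}$, and applies Proposition \ref{prop:concavity} to the sublevel-set volume $V(t)=dV_{Y}(\{\psi<t\})$. Unwinding the proof of that proposition, the key step is exactly the one you perform: replace $u$ by $\chi(u)$ where $\chi$ is increasing, convex, and satisfies $0\leq\chi'\leq1$, observe that this preserves $\omega$-plurisubharmonicity, complexify the translation parameter to get a subgeodesic, and invoke the complex Pr\'ekopa/Berndtsson theorem (\ref{eq:complex prekopa}). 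The paper's choice of $\chi$ is the smooth one induced by $e^{-r^{2}}$, which produces the density $V(t)$ and the Gamma-function normalization $G(\gamma)=\gamma/(\Gamma(1-\gamma)/2)$. You instead take the sharp truncation $\chi=\max(0,\cdot)$, giving $\Theta(s)=\int_{X}e^{-(u+s)_{+}}dV_{X}$, and a direct Fubini/Tonelli split of the inner $s$-integral at $s=-u(x)$ yields the pleasantly explicit $G(\gamma)=\gamma(1-\gamma)$. Your verification of $dd^{c}\chi_{\varepsilon}(w)+\omega\geq\chi_{\varepsilon}'(dd^{c}w+\omega)+(1-\chi_{\varepsilon}')\omega\geq0$ is exactly the implication \ref{eq:implication} in the paper, so the regularization step is the standard one. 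For the moment estimate the two proofs genuinely diverge: the paper integrates by parts in $t$, splits $|\psi|\leq|u|+|\log r^{2}|$ on $Y$, and controls the second term by a digamma-type constant $C_{\gamma}$; you instead differentiate the exact identity $\log Z_{u}=\log\gamma+\log(1-\gamma)+\log\int e^{-t\gamma}\nu_{0}$, use $|t|=-t+2t_{+}$, and bound $\int t_{+}\,d\nu_{\gamma}$ by $\gamma^{-1}$ using only $\Theta\leq1$ together with $\Theta\equiv1$ on $(-\infty,0]$ (the place where $\sup_{X}u\leq0$ enters). This yields $C=1$, a sharper and more transparent constant than the paper's unspecified $C$. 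Both approaches extend to singular Fano varieties and singular $dV_{X}$ in the same way, via the generalization of \ref{eq:complex prekopa} recorded in \cite{bbegz}; your remark about falling back to the Dirac-mass alternative in the definition of log-concavity for those cases is apt. The one place you should be a bit more careful in a final write-up is the regularization step, which you flag as the ``hard part'': you want to ensure the approximating $\chi_{\varepsilon}$ decrease to $\max(0,\cdot)$ and are chosen so that the corresponding $\Theta_{\varepsilon}$ converge pointwise (monotone convergence does the job), so that log-concavity passes to the limit — this is routine but should be stated.
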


It should be stressed that the formula in the previous theorem does
not hold with a \emph{constant }function $G(\gamma),$ as can be checked
in simple examples. This means that the measure on $\R$ defined as
the push-forward of $dV_{X}$ under $u$ is not log-concave, in general.
Still it would be interesting to know if $G(\gamma)$ can be taken
to be bounded as $\gamma\rightarrow1$? If so, this would eliminate
the diverging factor $(1-\gamma)^{-1}$ appearing in Theorem \ref{thm:main in text}.

\subsection{Preparations for the proof of Thm \ref{Thm:Z as log concave}}

The proof is based on a lifting argument where $X$ is replaced by
the total space of $K_{X}\rightarrow X.$ We start by setting up some
notation. Given a Fano manfold $X$ denote by $Y$ the total space
of the canonical line bundle $K_{X}\rightarrow X$ and by $Y^{*}$
the space obtained by deleting the zero-section of $K_{X}\rightarrow X.$
There is a canonical $\C^{*}-$action on $Y,$ induced by the linear
structure on the fibers of the line bundle $K_{X}\rightarrow X.$
Moreover, the space $Y$ comes with a canonical holomorphic top form
$\Omega,$ which is one-homogeneous wrt the $\C^{*}-$action on $Y.$
Indeed, $\Omega$ can be constructed as follows. Fix local holomorphic
coordinates $z\in\C^{n}$ on $X$ and trivialize, locally, $K_{X}\rightarrow X$
by the corresponding holomorphic section $dz(:=dz_{1}\wedge\cdots\wedge dz_{n}).$
This induces local holomorphic coordinates $(z,w)\in\C^{n+1}$ on
$Y$ and the top form $\Omega$ defined by $dz\wedge dw$ glues to
a globally well-defined holomorphic top form on $Y.$

Recall that we have fixed a volume form $dV_{X}$ on $X$ with positive
Ricci curvature, denoted by $\omega$ (Section \ref{subsec:The-Fano-setup}).
It induces a function $r$ on $Y$ determined by
\begin{equation}
i^{(n+1)}\Omega\wedge\bar{\Omega}=rdr\wedge dV_{X}\wedge d\theta,\,\,\text{on \ensuremath{Y^{*}}}\label{eq:polar coordinates}
\end{equation}
 where $dV_{X}\wedge d\theta$ denotes the fiber product of the measure
$dV_{X}$ on the base of the fibration $Y^{*}\rightarrow X$ with
the family of standard $S^{1}-$invariant measures $d\theta$ on the
fibers of the fibration. The function $r$ is one-homogeneous wrt
the $\R_{>0}-$action on $Y$ (since $\Omega$ is) and plurisubharmonic
(since, by assumption, $\text{Ric \ensuremath{dV_{X}\geq0}). }$ Moreover,
$r$ extends to a psh function on $Y$ (since it is bounded from above
in a neighourhood of the zero-section in $K_{X}).$

To any given $u\in\text{PSH}(X,\omega)$ we associate the following
 psh function on $Y:$
\begin{equation}
\psi:=u+\log r^{2}.\label{eq:psi as u}
\end{equation}
 The psh functions $\psi$ on $Y$ that can be be expressed in this
way are precisely the ones which are\emph{ log-homogeneous} under
the $\C^{*}-$action on $Y;$ $\psi(\lambda\cdot)=\log(|\lambda|^{2})+\psi$
for any $\lambda\in\C^{*}.$ 
\begin{prop}
\label{prop:concavity}Assume that $\psi_{0}=f(\log r^{2}),$ where
$f$ is an increasing convex function on $]-\infty,\infty[,$ assumed
bounded from below and that $\psi_{1}$ is psh and log-homogeneous
on $Y.$ Then the logarithm of the following function is concave on
$\R:$ 
\[
V(t):=\int_{\left\{ \psi_{1}\leq t\right\} }e^{-\psi_{0}}i^{(n+1)}\Omega\wedge\bar{\Omega},
\]
if it is finite for all $t.$ More generally, the result holds when
$Y$ is a (possibly singular) Fano variety and $r$ is the one-homogeneous
function corresponding to a (possibly singular) metric on $K_{X}^{*}$
with positive curvature current, assuming that the corresponding measure
$dV_{X}$ on $X$ gives finite total volume to $X.$ 
\end{prop}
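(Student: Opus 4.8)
The plan is to recognize $V(t)$ as a sublevel-set volume of a plurisubharmonic function on a Kähler (or, more generally, log terminal) manifold, and then invoke the complex Prékopa / Brunn--Minkowski-type theorems for sublevel volumes from Berndtsson's work \cite{bern1,bern1b}. Concretely, introduce an extra complex variable $\tau\in\mathbb{C}$, set $\tilde Y:=Y\times\mathbb{C}$, and consider on $\tilde Y$ the $i\mathbb{R}$-invariant function
\[
\Psi(y,\tau):=\max\bigl(\psi_1(y)-2\Re\tau,\ \psi_0(y)\bigr).
\]
Since $\psi_1$ is psh and $\psi_0=f(\log r^2)$ is psh (as $f$ is convex increasing and $\log r^2$ is psh), $\Psi$ is psh on $\tilde Y$, and it depends on $\tau$ only through $s:=-\Re\tau$. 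The point is that the ``partial'' total mass of $e^{-\Psi}$ (times the canonical volume form) in the $y$-variable is essentially $V(t)$ with $t$ a linear function of $s$: on the region $\{\psi_1-2\Re\tau<\psi_0\}$ the integrand is $e^{-\psi_0}$ and the fibrewise integral reproduces $\int_{\{\psi_1\le 2\Re\tau\}}e^{-\psi_0}\,i^{(n+1)}\Omega\wedge\bar\Omega=V(\Re\tau)$, up to a contribution that is controlled because $\psi_0$ is bounded below. The logarithmic concavity of $V$ in $t$ then follows from the concavity of $s\mapsto -\log(\text{fibre mass})$ provided by Berndtsson's positivity theorem for direct images / the complex Prékopa inequality.

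First I would verify the reduction carefully in the smooth case: check that the boundedness-from-below of $f$ (hence of $\psi_0$) makes the auxiliary integral finite and comparable to $V$, differentiate (or use a difference quotient) to pass from concavity of the auxiliary $\log$-mass to concavity of $\log V(t)$, and confirm the required $i\mathbb{R}$-invariance and growth so that the Berndtsson machinery applies on the total space $Y$ of $K_X$ rather than on a compact base. A cleaner route, avoiding the extra variable, is to appeal directly to the theorem on log-concavity of $t\mapsto \operatorname{vol}\{\varphi<t\}$ for $\varphi$ psh together with a weight $e^{-\psi_0}$ whose Hessian is positive; this is exactly the ``hidden log-concavity'' the introduction advertises, and it is the sublevel-set form of complex Brunn--Minkowski. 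Either way the one essential input is the plurisubharmonicity of both $\psi_0$ and $\psi_1$ and the fact that $i^{(n+1)}\Omega\wedge\bar\Omega$ is the intrinsic volume form on $Y$ (so no curvature term from the background metric intrudes).

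For the singular extension, I would replace $Y$ by a resolution $\pi:\hat Y\to Y$ and pull everything back: the canonical form $\Omega$ pulls back to a holomorphic form that acquires zeros along the exceptional locus, and the log terminal hypothesis on $X$ (equivalently, the finiteness of $\int_X dV_X$) is precisely what guarantees that $e^{-\psi_0}\,i^{(n+1)}\pi^\ast\Omega\wedge\overline{\pi^\ast\Omega}$ is still a locally integrable positive measure on $\hat Y$ of the right form, so that $V(t)$ is unchanged and the smooth result applies upstairs. The psh functions $\psi_0,\psi_1$ pull back to psh functions (possibly with mild singularities along the exceptional set, harmless for the sublevel-volume estimate), and the one-homogeneity / log-homogeneity under the $\mathbb{C}^\ast$-action is preserved.

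The main obstacle I expect is not the formal reduction but ensuring that Berndtsson's theorem is applied in a legitimate regime: the base of the relevant fibration is non-compact (the fibres of $K_X\to X$, or the $\tau$-plane), $\psi_0$ is only bounded below rather than of bounded growth, and in the singular case one must control the locus where the resolution introduces degeneracies. So the delicate points are (i) an approximation/truncation argument — e.g. replacing $f$ by $\min(f,N)$ or adding $\epsilon\log r^2$ — to reduce to a setting where the direct-image positivity theorem literally applies, then passing to the limit using monotone convergence and lower semicontinuity of $-\log$ of the masses; and (ii) checking that the log terminal condition gives exactly the integrability needed for the limit to be the stated $V(t)$ rather than something larger. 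Once those are in hand, the concavity of $\log V(t)$ is immediate from the cited Brunn--Minkowski / Prékopa inequality.
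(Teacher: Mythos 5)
Your strategy is precisely the one the paper explicitly considers and then \emph{declines} to pursue: Berman writes that one could "presumably proceed in a similar manner" using $\bar\partial$-estimates on the pseudoconvex sublevel domains $\{\psi_1\le t\}\subset Y$, but that this runs into technical difficulties (the sublevel sets need not have smooth boundary, $Y$ is noncompact, and the approximations become a serious obstacle once $X$, hence $Y$, is singular). So even in principle you are proposing the harder route. The paper's actual proof is a different reduction. It first uses the polar factorization $i^{(n+1)}\Omega\wedge\bar\Omega=r\,dr\wedge dV_X\wedge d\theta$ and Fubini to write $V(t)=\int_X\bigl(\int_{\{s<-u_t\}}e^{-\phi(s)}ds\bigr)dV_X$ with $\phi(s)=f(s)-s$ and $u_t:=u-t$; then defines $\chi(a):=-\log\int_{\{s<-a\}}e^{-\phi}$, and via one-dimensional Pr\'ekopa shows $\chi''\ge0$ and $0\le\chi'\le1$. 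The key observation is that these two inequalities force $dd^c\chi(U)\ge-\omega$ whenever $dd^cU\ge-\omega$, so $v_t:=\chi(u_t)$ is a \emph{subgeodesic} in $\text{PSH}(X,\omega)$ over the complexified $t$-variable; the concavity of $\log V(t)=-\log\int_X e^{-v_t}dV_X$ (up to a constant) then follows from the complex Pr\'ekopa theorem of \cite{bern1b}, which lives on the compact base $X$, not on $Y$. This is what makes the singular extension painless: one approximates the singular metric on $K_X^*$ by locally bounded ones decreasingly and uses monotone convergence, while \cite{bbegz} supplies the convexity of the Pr\'ekopa functional for singular $X$. Your resolution-of-singularities detour is unnecessary in the paper's framework.

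Beyond the route, there is also a concrete error in your construction. With $\Psi(y,\tau)=\max(\psi_1(y)-2\Re\tau,\ \psi_0(y))$, the set where $e^{-\Psi}=e^{-\psi_0}$ is $\{\psi_1-2\Re\tau<\psi_0\}=\{\psi_1<\psi_0+2\Re\tau\}$, which is \emph{not} $\{\psi_1\le 2\Re\tau\}$ unless $\psi_0\equiv0$. Since $\psi_0=f(\log r^2)$ is genuinely nonconstant (it must grow to make the $r$-integrals converge), the "controlled contribution because $\psi_0$ is bounded below" does not repair this: the discrepancy is multiplicative in the sublevel domain, not an additive error term. The cone trick that actually reproduces $V(t)$ is $\Psi_m:=\psi_0+m\max(\psi_1-2\Re\tau,0)$ with $m\to\infty$, so that $e^{-\Psi_m}\to e^{-\psi_0}\mathbf{1}_{\{\psi_1\le 2\Re\tau\}}$; but then you still face the noncompact-fiber and boundary-regularity issues on $Y$ that the paper's Fubini/subgeodesic argument is designed to avoid.
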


\begin{proof}
In the case when $Y=\C^{m}$ (which, after passing to a finite cover,
corresponds to the case when $X=\P^{n})$ the result follows from
\cite[Prop 6.5]{ber1}, which is a slight generalization of \cite[Thm 1.2]{bern1}
and \cite[Thm 2.3]{b-b}. The proof first uses the subharmonicity
result for Bergman kernels established in \cite[Thm 1.1]{bern1}.
Then the $S^{1}-$symmetry of $\psi_{0}$ and $\psi_{1}$ is used.
In the case when $X$ is smooth one could, presumably, proceed in
a similar manner. Indeed, the proof of \cite[Thm 1.1]{bern1} is based
on $\overline{\partial}-$estimates for $(n+1,0)-$forms on the pseudoconvex
domain $\{\left\{ \psi_{1}\leq t\right\} \}$ with weight $e^{-\psi_{0}},$
which still apply when $Y$ is a complex manifold. But one technical
difficulty in the proof in \cite{bern1} stems from the fact that,
in general, the domain $\left\{ \psi_{1}\leq t\right\} $ may be not
have a smooth boundary, which is bypassed using an approximation procedure.
Since this seems to lead to major technical difficulties when $X$
(and thus also $Y)$ is singular we will instead give a more direct
proof, which has the virtue that is also applies to singular $X.$ 

First assume that $dV_{X}$ is a smooth volume form. Then $r$ is
a smooth coordinate on $Y^{*}.$ Hence, using the factorization \ref{eq:polar coordinates}
and first performing the integration over $r,$ we can express
\begin{equation}
V(t)=\int_{X}\left(\int_{0}^{e^{-u_{t}}}e^{-\psi_{0}(r)}rdr\right)dV_{X},\,\,\,u_{t}:=u-t.\label{eq:fubini}
\end{equation}
 The change of variables $s=\log(r^{2})$ yields 
\[
\frac{1}{2}\int_{0}^{e^{-u_{t}}}e^{-\psi_{0}(r)}rdr=\int_{\{s<-u_{t}\}}e^{-\phi(s)}ds,\,\,\,\phi(s):=f(s)-s
\]
Now, given a real variable $u,$ define $\chi(u)$ by
\[
\chi(u)=-\log\int_{\{s<-u\}}e^{-\phi(s)}.
\]
Let us show that
\begin{equation}
(i)\,\,\chi''(u)\geq0,\,\,\,(ii)\,0\leq\chi'(u)\leq1.\label{eq:chi deriv}
\end{equation}
 Since $\phi(s)$ is convex the first item follows directly from Prekopa's
theorem \cite{p} (or the Brunn-Minkoski inequality). Next, by definition,
\[
\chi'(u):=\frac{e^{-\phi(-u)}}{\int_{\{s<-u\}}e^{-\phi(s)}},
\]
 which is manifestly non-negative. In order to show that $\chi'(u)\leq1$
it is - since $\chi(u)$ is convex, by $(i)$ - enough to consider
the limit when $u\rightarrow\infty.$ By assumption, we have that
$\phi(s)=f(-\infty)-s+o(1),$ uniformly as $s\rightarrow-\infty.$
Hence, in the limit where $u\rightarrow\infty$ the quotient above
coincides with
\[
\lim_{u\rightarrow\infty}\frac{e^{-u}}{\int_{\{s<-u\}}e^{s}}=1,
\]
 using that the denominater equals $e^{-u}.$ This proves formula
\ref{eq:chi deriv}. 

Next note that, if $\chi$ satisfies the conditions in formula \ref{eq:chi deriv}
, then
\begin{equation}
u\in\text{PSH }(X,\omega),\implies\chi(u)\in\text{PSH }(X,\omega).\label{eq:implication}
\end{equation}
 Indeed,
\[
dd^{c}\chi(u)=\chi''du\wedge d^{c}u+\chi'dd^{c}u\geq\chi'\omega_{u}-\chi'\omega\geq-\omega.
\]
Now complexify $t$ and consider the function $U:=u-\Re t$ on $X\times(\R\times i\R).$
Then $dd^{c}U\geq-\omega,$ where we have identified $\omega$ with
its pull-back to $X\times(\R\times i\R).$ Applying the implication
\ref{eq:implication} with $u$ replaced by $U$ thus reveals that
$\chi(U)\in\text{PSH }(X\times(\R\times i\R),\omega).$ This means
that $v_{t}:=\chi(u_{t})$ is a subgeodesic in $\text{PSH}(X,\omega)$
(as defined in Section \ref{subsec:Metric-completions-and}). Hence,
the proposition follows from the following complex generalization
of the Prekopa theorem for convex functions established in \cite{bern1b}:
the function
\begin{equation}
t\mapsto-\log\int_{X}e^{-v_{t}}dV_{X}\label{eq:complex prekopa}
\end{equation}
 is convex for any subgeodesic $v_{t}.$ 

Next, consider the case when $X$ is non-singular, but $dV_{X}$ corresponds
to a (possibly singular) metric on $K_{X}^{*}.$ This means that the
local density of $dV_{X}$ is of the form $e^{-\phi}$ where $\phi$
locally represents the metric on $K_{X}^{*}$ in question, in additive
notation. In the case that $\phi$ is locally bounded the same proof
as in the case when $dV_{X}$ is a volume form (i.e. $\phi$ is smooth)
still applies. In the general case we can express $\phi$ as a decreasing
limit of locally bounded metrics $\phi_{j}$ on $K_{X}^{*}$ with
positive curvature current and conclude using the monotone convergence
theorem. Finally, when $X$ is non-singular we proceed in essentially
the same way, using that the convexity of the function \ref{eq:complex prekopa}
still holds, as observed in \cite{bbegz} (and the decomposition \ref{eq:fubini}
still holds, since it can be applied on the regular locus of $X).$
\end{proof}

\subsection{Conclusion of the proof of Thm \ref{Thm:Z as log concave}}

We will apply Prop \ref{prop:concavity} with $\psi_{0}=r^{2}.$ Expressing
$u$ in terms of $\psi$ on $Y$ we can write
\begin{equation}
\int_{X}e^{-\gamma u}dV_{X}=\frac{1}{\int_{0}^{\infty}r^{-2\gamma}e^{-r^{2}}rdr}\int_{Y}e^{-\gamma\psi}dV_{Y},\,\,\,\,dV_{Y}:=e^{-\psi_{0}}i^{(n+1)}\Omega\wedge\bar{\Omega}\label{eq:def of dV Y}
\end{equation}
 Hence, pushing forward the integration over $Y$ to $\R$ and using
the factorization \ref{eq:polar coordinates} gives

\begin{equation}
Z_{u}(\gamma)=\frac{1}{\int_{0}^{\infty}r^{-2\gamma}e^{-r^{2}}rdr}\int_{\R}e^{-\gamma t}\psi_{*}dV_{Y},\label{eq:Z in terms of push}
\end{equation}
 for any $\gamma.$ Next, note that for $\gamma>0$ 
\begin{equation}
\int_{\R}e^{-\gamma t}\psi_{*}dV_{Y}=\gamma\int_{\R}e^{-t\gamma}\nu_{0},\,\,\,\nu_{0}=V(t)dt,\,\,V(t):=dV_{Y}\left(\left\{ \psi<t\right\} \right).\label{eq:integrtion by parts}
\end{equation}
 Indeed, since $\psi_{*}dV_{Y}=V'(t)dt$ (in the sense of distributions)
integrating by parts reveals that the previous formula holds if 
\[
\lim_{t\rightarrow\pm\infty}e^{-t\gamma}V(t)=0,\,\,\,\,(\gamma>0)
\]
But, by Chebyshev's inequality, $V(t)\leq C_{\epsilon}e^{\epsilon t}$
for any $\epsilon\in]0,1[$ (with $C_{\epsilon}=\int_{Y}e^{-\epsilon\psi}dV_{Y}$).
The vanishing in the previous equation thus follows when $t\rightarrow\infty$
and $t\rightarrow-\infty$ by taking $\epsilon$ sufficiently close
to $0$ and $1,$ respectively. This concludes the proof of formula
\ref{eq:integrtion by parts} and shows that formula in the Theorem
holds with
\[
G(\gamma):=\frac{\gamma}{\int_{0}^{\infty}r^{-2\gamma}e^{-r^{2}}rdr}(=\frac{\gamma}{\Gamma(1-\gamma)/2}),
\]
 where $\Gamma(s)$ is the classical Gamma-function. Finally, the
log-concavity of $\nu_{0}$ follows from Prop \ref{prop:concavity},
applied to the functions $(\psi_{0},\psi_{1})=(r^{2},\psi)$ on $Y.$ 

\subsubsection{Proof of the inequality \ref{eq:ineq in thm log}}

Let us first show that 
\begin{equation}
\int_{\R}|t|d\nu_{\gamma}\leq\int_{Y}|\psi|\frac{e^{-\gamma\psi}dV_{Y}}{\int_{Y}e^{-\gamma\psi}dV_{Y}}+\gamma^{-1}.\label{eq:bound on abs t}
\end{equation}
First, integrating by parts (just as in the proof of formula \ref{eq:integrtion by parts})
yields, with $\sigma(t)$ denoting the $L^{\infty}-$function defined
by the sign of $t$, which is the distributional derivative of $|t|:$
\[
\gamma\int_{\R}|t|e^{-\gamma t}V(t)dt=\int_{\R}e^{-\gamma t}\left(|t|V(t)\right)'dt=\int_{\R}e^{-\gamma t}\left(|t|V'(t)+\sigma(t)V(t)\right)dt
\]
 Hence, 
\[
\int_{\R}|t|e^{-\gamma t}V(t)dt\leq\gamma^{-1}\int_{\R}e^{-\gamma t}|t|V'(t)dt+\gamma^{-1}\int_{\R}e^{-\gamma t}V(t)dt.
\]
 Dividing both sides with $\int e^{-\gamma t}V(t)dt$ and invoking
formulas \ref{eq:Z in terms of nu 1}, \ref{eq:integrtion by parts},
thus proves formula \ref{eq:bound on abs t}. Next, using that that
$|\psi|\leq|u|+|\log r^{2}|$ the integral over $Y$ in formula \ref{eq:bound on abs t}
may be estimated by
\[
\int_{Y}|u|\frac{e^{-\gamma\psi}dV_{Y}}{\int_{Y}e^{-\gamma\psi}dV_{Y}}+\int_{Y}|\log r^{2}|\frac{e^{-\gamma\psi}dV_{Y}}{\int_{Y}e^{-\gamma\psi}dV_{Y}},
\]
 where $dV_{Y}$ was defined in formula \ref{eq:def of dV Y}. Expressing
$dV_{Y}$ in terms of $dV_{X},$ the first integral equals $\int_{Y}|u|\mu_{\gamma}$
and the second one is equal to the following constant, only depending
on $\gamma,$
\[
C_{\gamma}:=\frac{\int_{0}^{\infty}e^{-r^{2}}|\log r^{2}|r^{-2\gamma}rdr}{\int_{0}^{\infty}e^{-r^{2}}r^{-2\gamma}rdr},
\]
 which satisfies $C_{\gamma}\leq C(1-\gamma)^{-1}$ (using that $C_{\gamma}$
is comparable to the first derivative of $\log\Gamma(1-\gamma)$ at
$\gamma=1).$

\section{Moment bounds on Fano manifolds}

Let $X$ be a Fano manifold. Using the notation introduced in the
previous sections we will, in this section, prove the following general
dimension-free moment bounds (from which Theorem \ref{thm:main intro}
will be deduced in the next section).
\begin{thm}
\label{thm:moment bounds}Let $X$ be an $n-$dimensional Fano manifold
and $\omega$ a Kähler form in $c_{1}(X).$ Given $p\in[1,\infty[$
and $\gamma\in]0,1[$ the following inequality holds for any $u$
in $\mathcal{H}(X,\omega)$ such that $\sup_{X}u\leq0:$

\[
\left(\int_{X}(-u)^{p}\mu_{\gamma u}\right)^{1/p}\leq A_{p}\int_{X}(-u)\mu_{\gamma u}+B_{p}\left(\gamma^{-1}+(1-\gamma)^{-1}\right)
\]
 where the constants $A_{p}$ and $B_{p}$ only depend on $p$. More
generally, given $\gamma\in]0,1[$ the inequality holds for any $u\in\text{PSH }(X,\omega)$
such that $\sup_{X}u\leq0,$ if $\int e^{-\gamma u}dV_{X}<\infty.$ 
\end{thm}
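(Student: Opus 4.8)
The plan is to reduce the statement to the Kahane--Khinchin type reverse Hölder inequality for log-concave measures, via the log-concavity package already assembled in Theorem \ref{Thm:Z as log concave}. The starting observation is that both sides of the desired inequality can be read off from the family of probability measures $\nu_\gamma$ on $\mathbb{R}$. Indeed, from formula \ref{eq:Z in terms of push} and the integration-by-parts identity \ref{eq:integrtion by parts}, pushing forward $\mu_{\gamma u}$ under $u$ on $X$ is, up to the explicit constant factor $C_\gamma$ coming from the fiber integration of $\log r^2$, the same as pushing forward $e^{-\gamma\psi}dV_Y / \int e^{-\gamma\psi}dV_Y$ under $\psi$ on $Y$, which is exactly $\nu_\gamma$. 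So $\int_X (-u)^p \mu_{\gamma u}$ is comparable to $\int_{\mathbb{R}} |t|^p \nu_\gamma$ plus lower-order error terms controlled by the $\gamma$-dependent constants $C_\gamma \le C(1-\gamma)^{-1}$ and $\gamma^{-1}$ that already appeared in the proof of \ref{eq:bound on abs t}.

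The key step is then to invoke the dimension-free reverse Hölder inequality for a random variable $T$ whose law is a log-concave probability measure on $\mathbb{R}$: there is a constant $A_p$ depending only on $p$ with $\|T\|_{L^p} \le A_p \|T\|_{L^1}$ (this is the Kahane--Khinchin inequality, \cite[App.III]{m-s}, \cite{la}, in the one-dimensional log-concave form; the effective version \cite{mu} gives $A_p$ explicitly). Applying this with $T$ distributed according to $\nu_\gamma$ — which is log-concave by Theorem \ref{Thm:Z as log concave} — yields $\left(\int_{\mathbb{R}} |t|^p \nu_\gamma\right)^{1/p} \le A_p \int_{\mathbb{R}} |t| \nu_\gamma$. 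Now I combine this with the already-established bound \ref{eq:ineq in thm log}, namely $\int_{\mathbb{R}} |t| \nu_\gamma \le \frac{d\log Z_u(\gamma)}{d\gamma} + C(\gamma^{-1} + (1-\gamma)^{-1})$, and with the identity $\frac{d\log Z_u(\gamma)}{d\gamma} = \int_X (-u)\mu_{\gamma u}$, to get the right-hand side of the theorem. The only remaining wrinkle is to transfer the $L^p$ bound from $\nu_\gamma$ back to $\mu_{\gamma u}$: since $|t| = |\psi|$ differs from $|u|$ on $Y$ by $|\log r^2|$, I use the elementary inequality $(a+b)^p \le 2^{p-1}(a^p + b^p)$ together with the fact that the $L^p(\nu_\gamma)$ norm of the $\log r^2$ contribution is a constant depending only on $p$ and $\gamma$, bounded by $C_p(1-\gamma)^{-1}$ (again from the Gamma-function asymptotics used for $C_\gamma$). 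Absorbing these error terms into $B_p(\gamma^{-1}+(1-\gamma)^{-1})$ completes the smooth case.

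The main obstacle is bookkeeping the error terms so that all constants genuinely depend only on $p$ (and $\gamma$ through the stated explicit form), not on $n$; this is where the "hidden" log-concavity does the real work, since it is precisely the $n$-independence of the Kahane--Khinchin constant that is being exploited, and one must be careful that the passage between $X$ and $Y$ and the fiber integrations introduce no $n$-dependence — which indeed they do not, as the fiber is always one (complex) dimensional. For the extension to $u \in \mathrm{PSH}(X,\omega)$ with $\sup_X u \le 0$ and $\int e^{-\gamma u}dV_X < \infty$, I would argue by approximation: write $u$ as a decreasing limit of $u_j \in \mathcal{H}(X,\omega)$ (after adding a small multiple of a fixed potential and using that $\mathrm{PSH}$ functions are decreasing limits of smooth Kähler potentials, e.g. \cite{bl-k} or Demailly regularization on the Fano manifold), normalize so that $\sup_X u_j \le 0$, apply the smooth case to each $u_j$, and pass to the limit using monotone/dominated convergence for the measures $\mu_{\gamma u_j}$ — the finiteness hypothesis $\int e^{-\gamma u}dV_X < \infty$ is exactly what is needed to control $\int (-u)^p \mu_{\gamma u}$ on the nose, since $(-u)^p e^{-\gamma u} \le C_{p,\epsilon} e^{-(\gamma+\epsilon)u}$ for the polynomial-times-exponential comparison, provided $\gamma + \epsilon < 1$ so that this remains integrable. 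One small point to check is lower semicontinuity versus convergence of $\frac{d\log Z_{u_j}(\gamma)}{d\gamma}$, which follows from convexity of $\gamma \mapsto \log Z_{u_j}(\gamma)$ and locally uniform convergence of $Z_{u_j}$ on compact subsets of the interval of finiteness.
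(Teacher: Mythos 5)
Your proposal has a genuine gap at the very first step, and it is exactly the subtlety that forces the paper into its inductive cumulant argument. You claim that ``pushing forward $e^{-\gamma\psi}dV_Y/\int e^{-\gamma\psi}dV_Y$ under $\psi$ on $Y$ \dots is exactly $\nu_\gamma$.'' This is false. The pushforward $\psi_*\bigl(e^{-\gamma\psi}dV_Y\bigr)$ has density $e^{-\gamma t}V'(t)$, where $V(t):=dV_Y(\{\psi<t\})$, whereas $\nu_\gamma$ in Theorem \ref{Thm:Z as log concave} has density proportional to $e^{-\gamma t}V(t)$: the measure $\nu_0$ is built from the \emph{distribution function} $V$, not from the density $V'$. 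The integration-by-parts identity \ref{eq:integrtion by parts} asserts only that $\int e^{-\gamma t}V'(t)\,dt=\gamma\int e^{-\gamma t}V(t)\,dt$, i.e.\ that these two measures have the same total mass up to the factor $\gamma$; it does not identify them as measures. Consequently the Kahane--Khinchin inequality does apply to $\nu_\gamma$, which is log-concave by Theorem \ref{Thm:Z as log concave}, but not to the actual pushforward $\tilde\nu_\gamma\propto e^{-\gamma t}V'(t)\,dt$: Proposition \ref{prop:concavity} gives log-concavity of the cumulative function $V$, which is strictly weaker than log-concavity of $V'$. The latter would follow from a genuine Prekopa theorem for marginals, which \emph{fails} in the complex setting, and the paper flags precisely this in the paragraph after Theorem \ref{Thm:Z as log concave}: ``the measure on $\R$ defined as the push-forward of $dV_X$ under $u$ is not log-concave, in general.''

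Your Minkowski decomposition $-u=-\psi+\log r^2$ and the $L^p(\tilde\nu_\gamma)$ bound on the fiber term $|\log r^2|$ (via the Gamma-function asymptotics) are both fine, as is the observation that the $X$-marginal of $e^{-\gamma\psi}dV_Y$ is $\mu_{\gamma u}$. The problem is that the $|\psi|^p$-moment you need to control is $\int_\R|t|^p\,\tilde\nu_\gamma$, and there is no dimension-free reverse H\"older inequality available for $\tilde\nu_\gamma$. If you try to bring in the log-concave $\nu_\gamma$ by a further integration by parts, you get
\[
\gamma\int_\R|t|^p\nu_\gamma \;=\; \gamma\int_\R|t|^p\tilde\nu_\gamma \;+\; \frac{p}{Z}\int_\R e^{-\gamma t}|t|^{p-1}\sigma(t)V(t)\,dt,
\]
and the correction term is of the size of the $(p-1)$-st moment of $\nu_\gamma$; controlling it then requires the $(p-1)$-case of the very inequality you are trying to prove. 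Iterating this recursion \emph{is} the induction via Lemma \ref{lem:element} that the paper carries out. So the cumulant/induction machinery is not a dispensable technicality: it is the mechanism by which the log-concavity of $\nu_0$ (of the distribution function $V$, not the density $V'$) is transferred to the moments of $\mu_{\gamma u}$. As a minor further point, the extension to general $u\in\text{PSH}(X,\omega)$ does not need a Demailly-type regularization: once the strong openness theorem guarantees $\int e^{-(\gamma+\epsilon)u}dV_X<\infty$, the argument of Theorem \ref{Thm:Z as log concave} and the induction apply verbatim, which is how the paper handles it in Section \ref{subsec:The-proof-for general and open}.
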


\begin{rem}
\label{rem:not independent of p}The inequality above does not hold
with $A_{p}$ and $B_{p}$ independent of $p.$ Indeed, otherwise
one could, by letting $p\rightarrow\infty,$ replace the lhs in the
inequality with $\left\Vert u\right\Vert _{L^{\infty}(X)}.$ But this
contradicts the fact that there exist $u\in\text{PSH \ensuremath{(X,\omega)}}$
which are unbounded, while $\int_{X}(-u)\mu_{\gamma u}$ is finite
for any $\gamma$ (for example, any unbounded $u\in\text{PSH \ensuremath{(X,\omega)}}$
with vanishing Lelong numbers).
\end{rem}

The idea of the proof is to combine Thm \ref{Thm:Z as log concave}
with the following well-known Kahane-Khinchin inequality for\emph{
}log-concave probability measure $\nu$ on $\R$ and $p\geq1:$
\begin{equation}
\left(\int|t|^{p}d\nu\right)^{1/p}\leq C_{p}\left(\int|t|d\nu\right)\label{eq:reversed H=0000F6lder log concave}
\end{equation}
with a universal constant $C_{p}$ (only depending on $p)$ \cite[App.III]{m-s}\cite{la}.
In order to use this inequality we will apply the following elementary
\begin{lem}
\label{lem:element}Let $\sigma$ be a measure on a space $S$ and
$f$ a measurable function on $(S,\sigma).$ Given $\gamma\in\R$
denote by $\left\langle \cdot\right\rangle _{\gamma}$ integration
wrt the probability measure $e^{\gamma f}\sigma/\int_{S}e^{\gamma f}\sigma,$
assuming that $\int_{S}e^{(\gamma+\epsilon)f}\sigma<\infty$ for all
sufficiently small $\epsilon.$ Then there exist universal coefficents
$a_{j_{1},...,j_{p-1}}\in\R$ such that

\[
\left\langle f^{p}\right\rangle _{\gamma}=\frac{d^{p}}{d\gamma^{p}}\log\left\langle e^{f\gamma}\right\rangle _{\gamma}+\sum_{j_{1},...,j_{p-1}}a_{j_{1},...,j_{p-1}}\left\langle f^{j_{1}}\right\rangle _{\gamma}\left\langle f^{j_{2}}\right\rangle _{\gamma}\cdots\left\langle f^{j_{p-1}}\right\rangle _{\gamma},
\]
 where the sum ranges over all non-negative integer indices $(j_{1},...,j_{p-1})$
such that $j_{i}<p,$ $j_{1}+...+j_{p-1}=p.$ 
\end{lem}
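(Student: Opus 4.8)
The plan is to prove Lemma~\ref{lem:element} by induction on $p$, exploiting the elementary fact that differentiating the log-moment-generating function $\gamma \mapsto \log\langle e^{f\gamma}\rangle_\gamma$ produces cumulants, which are polynomial expressions in the moments $\langle f^j\rangle_\gamma$. First I would record the base case: for $p=1$ one has $\frac{d}{d\gamma}\log\langle e^{f\gamma}\rangle_\gamma = \langle f\rangle_\gamma$ directly (the empty product on the right being interpreted as the identity, so the correction sum is vacuous), which is the classical identity that the first derivative of the cumulant generating function is the mean. The differentiation under the integral sign is justified by the hypothesis $\int_S e^{(\gamma+\epsilon)f}\sigma < \infty$ for small $\epsilon$, which gives local uniform integrability of all the integrands $f^k e^{\gamma f}$ that appear.

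Next I would set up the inductive step. Assume the claimed identity holds for all exponents up to $p-1$. The key observation is that for the probability measure $\langle\cdot\rangle_\gamma = e^{\gamma f}\sigma/\int e^{\gamma f}\sigma$ one has the moment-differentiation rule
\[
\frac{d}{d\gamma}\langle g\rangle_\gamma = \langle g f\rangle_\gamma - \langle g\rangle_\gamma\langle f\rangle_\gamma
\]
for any $g$ for which the relevant integrals converge. Applying this with $g = f^{p-1}$ gives $\langle f^{p}\rangle_\gamma = \frac{d}{d\gamma}\langle f^{p-1}\rangle_\gamma + \langle f^{p-1}\rangle_\gamma\langle f\rangle_\gamma$. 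Now substitute the inductive hypothesis for $\langle f^{p-1}\rangle_\gamma$: it equals $\frac{d^{p-1}}{d\gamma^{p-1}}\log\langle e^{f\gamma}\rangle_\gamma$ plus a universal polynomial $P(\gamma)$ in the lower moments $\langle f^{j}\rangle_\gamma$ with $j < p-1$. Differentiating once more, the first term becomes $\frac{d^{p}}{d\gamma^{p}}\log\langle e^{f\gamma}\rangle_\gamma$, exactly the leading term we want. It remains to check that $\frac{d}{d\gamma}P(\gamma) + \langle f^{p-1}\rangle_\gamma\langle f\rangle_\gamma$ is a universal linear combination of products $\langle f^{j_1}\rangle_\gamma\cdots\langle f^{j_{p-1}}\rangle_\gamma$ with $j_i < p$ and $\sum j_i = p$.

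For that bookkeeping step I would argue as follows. Each monomial in $P$ has the shape $\langle f^{j_1}\rangle_\gamma\cdots\langle f^{j_m}\rangle_\gamma$ with all $j_i < p-1$ and $\sum j_i = p-1$; differentiating it by the product rule and using the moment-differentiation rule on each factor replaces one factor $\langle f^{j_i}\rangle_\gamma$ by $\langle f^{j_i+1}\rangle_\gamma - \langle f^{j_i}\rangle_\gamma\langle f\rangle_\gamma$, which raises the total degree to $p$ (either by bumping one index to $j_i+1 \le p-1 < p$, or by appending a factor $\langle f\rangle_\gamma$ and keeping the rest), and every index stays strictly below $p$. Similarly $\langle f^{p-1}\rangle_\gamma\langle f\rangle_\gamma$ is already of the desired form with $m = 2$, $j_1 = p-1 < p$, $j_2 = 1$. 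Counting factors: a product with total degree $p$ and all indices $\ge 1$ has at most $p$ factors, and one checks the number of factors never exceeds $p-1$ except possibly in the degenerate all-ones case $\langle f\rangle_\gamma^{p}$, which however only arises from differentiating $\langle f\rangle_\gamma^{p-1}$ and one verifies it actually appears with at most $p-1$ factors after collecting — in any case padding the index tuple with trailing zeros (using $\langle f^0\rangle_\gamma = 1$) lets us write every term uniformly with exactly $p-1$ indices as in the statement. The coefficients produced are rational numbers coming only from the product rule and the fixed inductive coefficients, hence universal (independent of $S$, $\sigma$, $f$, $\gamma$).

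The main obstacle is not conceptual but combinatorial: keeping the index constraints ($j_i < p$, $\sum j_i = p$, exactly $p-1$ slots) exactly right through the induction, and in particular ruling out a spurious top-degree term $\langle f^{p-1}\rangle_\gamma$ times nothing (degree $p-1 \ne p$) or a term with an index equal to $p$. Both are excluded automatically: the leading cumulant term $\frac{d^p}{d\gamma^p}\log\langle e^{f\gamma}\rangle_\gamma$ is precisely what carries the "$\langle f^p\rangle_\gamma$" content, so the correction terms are genuinely lower-order products, and the moment-differentiation rule raises an index by exactly one at a time, so starting from indices $< p-1$ we can reach at most $p-1 < p$. I would present the proof as the two-line base case, the displayed moment-differentiation identity, the one-line inductive computation, and a short paragraph on the index bookkeeping, deferring the explicit values of the $a_{j_1,\dots,j_{p-1}}$ since only their universality is needed for the application (via the Kahane--Khinchin inequality~\eqref{eq:reversed H=0000F6lder log concave}).
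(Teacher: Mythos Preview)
Your inductive argument via the identity $\frac{d}{d\gamma}\langle g\rangle_\gamma = \langle gf\rangle_\gamma - \langle g\rangle_\gamma\langle f\rangle_\gamma$ is exactly the ``direct differentiation'' the paper invokes; the paper's own proof is a two-line sketch (differentiate, or cite the Bell-polynomial formula for cumulants in terms of moments), and you have simply spelled it out. So the approach and content match.

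One genuine wrinkle in your bookkeeping paragraph: the term $\langle f\rangle_\gamma^{p}$ \emph{does} occur in the moment--cumulant relation (with coefficient $(-1)^{p-1}(p-1)!$), and it has $p$ nonzero factors, not $p-1$. Your claim that ``it actually appears with at most $p-1$ factors after collecting'' is not correct, and padding with zeros cannot reduce the number of slots. This is not a flaw in your method but an off-by-one in the lemma's index range as printed: the tuple should have $p$ entries rather than $p-1$ (check $p=2$: one needs the term $\langle f\rangle_\gamma^{2}$, which cannot be encoded by a single index $j_1$ with $j_1<2$ and $j_1=2$). Since the only feature used downstream is that the correction is a universal polynomial in the lower moments $\langle f^{j}\rangle_\gamma$, $j<p$, homogeneous of total degree $p$, this is cosmetic; but in your write-up you should state the index set correctly rather than argue around it.
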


\begin{proof}
This can be shown using direct differentiation, by rewriting $e^{\gamma f}\sigma/\int_{S}e^{\gamma f}\sigma=e^{\gamma f+\log\int_{S}e^{\gamma f}\sigma}\sigma.$
Alternatively, the formula follows from the well-known general result
expressing $a_{j_{1},...,j_{p-1}}$ in terms of partial Bell polynomials
\cite{w-n}.
\end{proof}
We will prove Theorem \ref{thm:moment bounds} by induction over $p.$
We thus assume that $p\geq2$ and that Theorem \ref{thm:moment bounds}
holds for $p-1.$ By the formula in Prop\ref{Thm:Z as log concave}
we can express 
\[
\log Z_{u}(\gamma)=\log\int_{\R}e^{-t\gamma}\nu_{0}+\log G(\gamma),
\]
for a log-concave measure $\nu_{0}.$ Applying the previous lemma
to $(S,\sigma,f)$ given by $(X,dV_{X},-u)$ and using the assumption
that the theorem holds for $p-1$ gives 
\[
\left\langle (-u)^{p}\right\rangle _{\gamma}\leq\frac{d^{p}}{d\gamma^{p}}\log\left\langle e^{-u\gamma}\right\rangle +\sum_{j_{1},...,j_{p-1}}\left|a_{j_{1},...,j_{p-1}}\right|\left\langle (-u)\right\rangle _{\gamma}^{j_{1}+...+j_{p-1}}\leq
\]
\[
\leq\frac{d^{p}\gamma}{d\gamma}\log\left\langle e^{t\gamma}\right\rangle _{\gamma}+\left(A_{p-1}\left\langle (-u)\right\rangle _{\gamma}+c_{p-1}\right)^{p}+C_{\gamma},
\]
 where $c_{p-1}$ is the additive constant appearing in the theorem
for $p-1$ and $C_{\gamma}$ is the absolute value of the $p-$th
derivative of the smooth function $\log G(\gamma),$ which satisfies
$C_{\gamma}\leq C\left(\gamma^{-1}+(1-\gamma)^{-1}\right)^{p}$ (since
$\Gamma(s)$ has a simple pole at $s=0$). Next, applying Lemma \ref{lem:element}
to $(\R,\nu_{0},t),$ combined with the Kahane-Khinchin inequality
\ref{eq:reversed H=0000F6lder log concave} for the log-concave probability
measures $\nu_{\gamma}$ on $\R$ defined by
\[
\nu_{\gamma}:=\frac{e^{-t\gamma}\nu_{0}}{\int_{\R}e^{-t\gamma}\nu_{0}},
\]
 gives 
\[
\frac{d^{p}\gamma}{d\gamma}\log\left\langle e^{t\gamma}\right\rangle _{\gamma}\leq C_{p}\left(\int|t|\nu_{\gamma}\right)^{p}
\]
The induction step is thus conclude by combining the inequality in
Theorem \ref{Thm:Z as log concave} with the standard reverse Hölder
inequality in $\R^{3}$ (appearing in formula \ref{eq:reverse H with N}). 

\subsection{\label{subsec:The-proof-for general and open}The proof for general
$u$ }

Finally, given $\gamma\in]0,1[$ we will show that the inequality
in the theorem holds in the general case where $u\in\text{PSH }(X,\omega)$
and $\int e^{-\gamma u}dV_{X}<\infty$ (assuming that $\sup_{X}u\leq0,$
as before). First observe that both sides in the inequality to be
shown are still finite in the general case. Indeed, by the resolution
of Demailly-Kollar's openness conjecture (that we shall come back
to in Section \ref{sec:Effective-openness}) there exists $\epsilon>0$
such that 
\begin{equation}
\int e^{-(\gamma+\epsilon)u}dV_{X}<\infty.\label{eq:integrability}
\end{equation}
 Given this integrability \ref{eq:integrability}, the proof of the
theorem proceeds exactly as in the previous case.

\section{Reverse Hölder inequalities on the space of Kähler metrics}

We next turn to the proof of the following slightly more general formulation
of Theorem \ref{thm:main intro}, stated in the introduction:
\begin{thm}
\label{thm:main in text}Given a Kähler form $\omega$ in $c_{1}(X)$
and $p\in[1,\infty[$ the following inequality holds for any $u\in\text{PSH }(X,\omega)$
such that $\omega_{u}^{n}$ is absolutely continuous wrt Lesbesgue
measure and any $\gamma\in]0,1[$ such that $\int e^{-\gamma u}dV_{X}<\infty:$

\[
d_{p}(u,0)\leq Ad_{1}(u,0)+B,
\]
with
\[
A=A_{p}e^{2\left\Vert \rho_{u,\gamma}\right\Vert _{L^{\infty}}},\,\,\,B=B_{p}\left(\gamma^{-1}+(1-\gamma)^{-1}\right)e^{\left\Vert \rho_{u,\gamma}\right\Vert _{L^{\infty}}},
\]
where $A_{p}$ only depends on $(p,n)$ and $B_{p}$ also depends
on $(X,$$\omega).$ Moreover, if $\sup_{X}u\leq0$ then $A_{p}/(n+1)$
is independent of $n$ and $B_{p}$ only depends on $(p,n).$ 
\end{thm}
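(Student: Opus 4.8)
The plan is to deduce Theorem \ref{thm:main in text} from the moment bounds of Theorem \ref{thm:moment bounds} by translating both $d_p(u,0)$ and $d_1(u,0)$ into statements about the measure $\mu_{\gamma u}$, paying the price of $e^{\|\rho_{u,\gamma}\|_{L^\infty}}$ each time one passes between $\mu_{\gamma u}$ and the Monge--Amp\`ere measure $\omega_u^n/V$. First I would reduce to the case $\sup_X u\le 0$: replacing $u$ by $u-\sup_X u$ changes $d_p(u,0)$ and $d_1(u,0)$ by the constant $|\sup_X u|$, which is itself controlled by $c_{n,1}d_1(u,0)+C_\omega$ via \eqref{eq:bounds of sup in terms of d one}, so the general statement (with $A_p$, $B_p$ depending on $(X,\omega)$ and $n$) follows from the normalized one (with $B_p$ depending only on $(p,n)$) by absorbing constants. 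Note also that $\rho_{u,\gamma}$ is unchanged under $u\mapsto u+c$ up to an additive constant absorbed into the normalization $\int e^{\rho_{u,\gamma}}\omega_u^n/V=1$, so the hypothesis is preserved.

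So assume $\sup_X u\le 0$. The key chain of inequalities is: by the first estimate in Lemma \ref{lem:darvas},
\[
d_p(u,0)\le\Bigl(\int_X|u|^p\frac{\omega_u^n}{V}\Bigr)^{1/p}.
\]
Now rewrite $\omega_u^n/V=e^{-\rho_{u,\gamma}}\mu_{\gamma u}$ using \eqref{eq:def of twisted Ricci pot}, so that $\int_X|u|^p\frac{\omega_u^n}{V}\le e^{\|\rho_{u,\gamma}\|_{L^\infty}}\int_X|u|^p\mu_{\gamma u}$, giving
\[
d_p(u,0)\le e^{\|\rho_{u,\gamma}\|_{L^\infty}/p}\Bigl(\int_X(-u)^p\mu_{\gamma u}\Bigr)^{1/p}.
\]
Apply Theorem \ref{thm:moment bounds} to bound the right-hand side by $e^{\|\rho_{u,\gamma}\|_{L^\infty}/p}\bigl(A_p\int_X(-u)\mu_{\gamma u}+B_p(\gamma^{-1}+(1-\gamma)^{-1})\bigr)$. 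For the first moment, go back the other way: $\mu_{\gamma u}=e^{\rho_{u,\gamma}}\omega_u^n/V$, so $\int_X(-u)\mu_{\gamma u}\le e^{\|\rho_{u,\gamma}\|_{L^\infty}}\int_X(-u)\frac{\omega_u^n}{V}\le e^{\|\rho_{u,\gamma}\|_{L^\infty}}(n+1)d_1(u,0)$, the last step being the second estimate in Lemma \ref{lem:darvas}. Combining, and using $e^{\|\rho_{u,\gamma}\|_{L^\infty}/p}\le e^{\|\rho_{u,\gamma}\|_{L^\infty}}$, one gets $d_p(u,0)\le A_p(n+1)e^{2\|\rho_{u,\gamma}\|_{L^\infty}}d_1(u,0)+B_p(\gamma^{-1}+(1-\gamma)^{-1})e^{\|\rho_{u,\gamma}\|_{L^\infty}}$, which is exactly the claimed form with $A_p/(n+1)$ depending only on $p$ (renaming the $A_p$ of Theorem \ref{thm:moment bounds}) and $B_p$ depending only on $(p,n)$. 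The finiteness of everything in the general $\omega_u^n\ll$ Lebesgue case with $\int e^{-\gamma u}dV_X<\infty$ is already handled inside Theorem \ref{thm:moment bounds} via Demailly--Koll\'ar openness.

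I do not expect a genuine obstacle here: the argument is essentially bookkeeping of the two conversions between $\mu_{\gamma u}$ and $\omega_u^n/V$ and a careful tracking of where the factors $e^{\|\rho_{u,\gamma}\|_{L^\infty}}$ and $(n+1)$ enter. The one point that deserves care is the reduction to $\sup_X u\le 0$: I must check that the $(n+1)$ and the exponential factors do not degrade in that reduction, which is why the general statement only asserts $A_p$ depends on $(p,n)$ (not $A_p/(n+1)$ universal) and $B_p$ on $(X,\omega)$ — the shift by $\sup_X u$, estimated through \eqref{eq:bounds of sup in terms of d one}, contributes terms of the form $c_{n,1}d_1(u,0)$ and $C_\omega$ that get absorbed into $A$ and $B$ respectively, costing the extra $n$-dependence in $A_p$ and the $(X,\omega)$-dependence in $B_p$. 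A secondary subtlety is that Lemma \ref{lem:darvas} is stated for $u\in\mathcal H(X,\omega)$ smooth; for the general $u\in\mathrm{PSH}(X,\omega)$ with absolutely continuous $\omega_u^n$ one invokes that both estimates in Lemma \ref{lem:darvas}, and the identity $-\mathcal E(u)=d_1(u,0)$ on which the second rests, extend to $\mathcal E^1(X,\omega)$ by the density and continuity results of \cite{da} recalled in Section \ref{subsec:Metric-completions-and}, together with the fact that $\int e^{-\gamma u}dV_X<\infty$ forces $u\in\mathcal E^p(X,\omega)$ for some $p>1$ (again by openness), so no boundary-case integrability issue arises.
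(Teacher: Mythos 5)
Your proof is correct and follows the paper's argument essentially verbatim: for $\sup_X u\le 0$, pass from $\omega_u^n/V$ to $\mu_{\gamma u}$ and back via $e^{\pm\rho_{u,\gamma}}$ around Theorem \ref{thm:moment bounds} and close with Lemma \ref{lem:darvas}; for general $u$, decompose $u=\tilde u+\sup_X u$ and absorb $\left|\sup_X u\right|$ via the bound \ref{eq:bounds of sup in terms of d one}. The only minor slip is the parenthetical claim that openness directly forces $u\in\mathcal E^p(X,\omega)$ --- rather, this follows from the conclusion of Theorem \ref{thm:moment bounds} together with $\left\Vert\rho_{u,\gamma}\right\Vert_{L^\infty}<\infty$ (the inequality being vacuous otherwise) --- but this does not affect the validity of your argument.
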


First assume that $\sup_{X}u\leq0.$ Then Theorem \ref{thm:moment bounds}
implies, by the very definition of the twisted Ricci potential (definition
\ref{eq:def of twisted Ricci pot}), that 
\begin{equation}
\left(\int_{X}(-u)^{p}\frac{\omega_{u}^{n}}{V}\right)^{1/p}\leq A_{p}e^{\sup_{X}\rho_{u,\gamma}-p^{-1}\inf_{X}\rho_{u,\gamma}}\int_{X}(-u)\frac{\omega_{u}^{n}}{V}+Be^{-p^{-1}\inf_{X}\rho_{u,\gamma}},\label{eq:bound on p energy}
\end{equation}
 where $B=B_{p}\left(\gamma^{-1}+(1-\gamma)^{-1}\right).$ Invoking
the inequalities in Lemma \ref{lem:darvas} thus concludes the proof
when $\sup_{X}u\leq0$. Finally, in the general case we may decompose
$u=\tilde{u}+\sup_{X}u$ where $\sup_{X}\tilde{u}=0.$ Combining the
previous case with the triangle inequality for $d_{p}$ yields 
\[
d_{p}(u,0)\leq d_{p}(\tilde{u},0)+\left|\sup_{X}u\right|\leq Ad_{1}(\tilde{u},0)+B+\left|\sup_{X}u\right|\leq Ad_{1}(u,0)+B+2\left|\sup_{X}u\right|.
\]
 The proof is thus concluded by invoking the inequality \ref{eq:bounds of sup in terms of d one}
for $p=1.$

\subsection{\label{subsec:Application-to-destabilizing}Application to destablizing
geodesic rays}

We next deduce the following slighltly more general formulation of
Cor \ref{cor:intro}.
\begin{cor}
\label{cor:geodesic rays text}Let $u_{j}$ be a sequence in $\mathcal{E}^{1}(X,\omega)$
such that $\omega_{u_{j}}^{n}$ is absolutely continuous wrt Lesbesgue
measure and $\int e^{-\gamma_{j}u}dV_{X}<\infty$ for some sequence
$\gamma_{j}$ contained in a compact subset of $]0,1[.$ Assume that

\[
(i)\,d_{1}(u_{j},0)\rightarrow\infty,\,\,\,(ii)\,\mathcal{M}(u_{j})\leq C,\,\,\,\ensuremath{(iii)\,\left|\rho_{u_{j,\gamma_{j}}}\right|\leq R}
\]
Then
\begin{itemize}
\item $u_{j}$ is weakly asymptotic to a ray $v_{t}$ which is a $d_{p}-$geodesic
ray in $\overline{(\mathcal{H}(X,\omega),d_{p})}$ for any $p\in[1,\infty[$
and $t\mapsto\mathcal{M}(v_{t})$ is decreasing. 
\item the $d_{p}-$ speed $\left\Vert \dot{v}\right\Vert _{p}$ of the geodesic
$v_{t}$ satisfies
\begin{equation}
\left\Vert \dot{v}\right\Vert _{p}\leq A\left\Vert \dot{v}\right\Vert _{1},\label{eq:rever Holder for geod-2}
\end{equation}
for a constant $A$ of the form $A_{p,n}e^{2R}$ where $A_{p,n}$
only depends on $(n,p).$ Moreover, if $\sup_{X}u_{j}\leq0,$ then
$A_{p,n}=A_{p}(n+1),$ where $A_{p}$ only depends on $p.$
\end{itemize}
\end{cor}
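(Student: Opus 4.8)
To prove Corollary~\ref{cor:geodesic rays text}, the plan is to realize $v_t$ as a limit of the $d_1$-geodesic segments joining $0$ to $u_j$, following the construction of \cite[Thm 3.2]{d-h}, and then to transport the reverse H\"older bound of Theorem~\ref{thm:main in text} --- which will hold with constants \emph{uniform in $j$} on those segments --- to the limiting ray. First I would reduce to the normalized case $\sup_X u_j\le 0$: in general one writes $u_j=\tilde u_j+\sup_X u_j$ with $\sup_X\tilde u_j=0$ and combines the normalized conclusion with the estimate \ref{eq:bounds of sup in terms of d one} for $|\sup_X u_j|$, exactly as at the end of the proof of Theorem~\ref{thm:main in text}; this is precisely what turns the clean constant $A_p(n+1)$ (with $A_p$ depending only on $p$) into a constant $A_{p,n}$ depending on $n$ through $c_{n,1}$.

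Assume then $\sup_X u_j\le 0$ and set $T_j:=d_1(u_j,0)\to\infty$. Let $s\mapsto w^{(j)}_s$, $s\in[0,T_j]$, be the $d_1$-arclength reparametrization of the psh (finite energy) geodesic from $0$ to $u_j$, so that $d_1(w^{(j)}_s,0)=s$ and $w^{(j)}_{T_j}=u_j$. Since $\mathcal M$ is convex along psh geodesics and $\mathcal M(0),\mathcal M(u_j)\le C$, one has $\mathcal M\le C'$ along the whole segment; by the compactness argument of \cite{d-h} a subsequence of the $w^{(j)}$ converges to a unit-speed psh geodesic ray $v_t$, $t\in[0,\infty[$, to which $u_j$ is weakly asymptotic, and $t\mapsto\mathcal M(v_t)$ is then convex (being convex along each finite piece, which is a psh geodesic) and bounded above (by lower semicontinuity of $\mathcal M$), hence non-increasing. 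Next, for fixed $p\ge 1$, each $u_j$ satisfies the hypotheses of Theorem~\ref{thm:main in text} ($\omega_{u_j}^n$ absolutely continuous, $\int e^{-\gamma_j u_j}\,dV_X<\infty$), which gives $d_p(u_j,0)\le A_j\,d_1(u_j,0)+B_j$ with $A_j=A_p(n+1)e^{2\|\rho_{u_j,\gamma_j}\|_{L^\infty}}$ and $B_j=B_p(\gamma_j^{-1}+(1-\gamma_j)^{-1})e^{\|\rho_{u_j,\gamma_j}\|_{L^\infty}}$. Using hypothesis $(iii)$ one bounds $A_j\le A_p(n+1)e^{2R}=:A$, and since the $\gamma_j$ stay in a fixed compact subset of $]0,1[$ the prefactor $\gamma_j^{-1}+(1-\gamma_j)^{-1}$ is uniformly bounded, so $B_j\le B'$ independently of $j$. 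In particular $d_p(u_j,0)<\infty$, i.e. $u_j\in\mathcal E^p(X,\omega)$, so the psh geodesic $w^{(j)}$ is simultaneously the $d_p$-geodesic from $0$ to $u_j$, whence $\|\dot w^{(j)}\|_p=d_p(u_j,0)/T_j\le A+B'/T_j$ while $\|\dot w^{(j)}\|_1=1$.

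The final step is the passage to the limit. Along $w^{(j)}\to v$, lower semicontinuity of $d_p$ on $\mathcal E^p$ together with $T_j\to\infty$ gives, for every $s>0$, $d_p(v_s,0)\le\liminf_j d_p(w^{(j)}_s,0)=\liminf_j s\,\|\dot w^{(j)}\|_p\le sA$. Hence each $v_s$ lies in $\mathcal E^p$, so $v_t$ is a $d_p$-geodesic ray (a psh geodesic with endpoints in $\mathcal E^p$ being a $d_p$-geodesic), and dividing by $s$ yields $\|\dot v\|_p\le A=A_p(n+1)e^{2R}$. Since $\|\dot v\|_1=1$ this is the asserted inequality \ref{eq:rever Holder for geod-2} with $A_{p,n}=A_p(n+1)$ in the normalized case; in general the normalization reduction of the first paragraph introduces a further factor depending only on $(n,p)$, giving the stated form $A_{p,n}e^{2R}$.

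The step I expect to require the most care is this last passage to the limit: one must know that the reparametrized segments $w^{(j)}$ converge to a genuine psh geodesic \emph{ray} of unit $d_1$-speed (so that $\|\dot v\|_1=1$, not merely $\le 1$), and that $d_p$ is lower semicontinuous along that convergence so that the uniform bound on $\|\dot w^{(j)}\|_p$ is inherited by $v$. Both facts belong to the metric geometry of $(\mathcal E^p,d_p)$ and the geodesic-ray constructions developed in \cite{da,d-l,d-h}; granting them, the corollary is a direct combination of that machinery with Theorem~\ref{thm:main in text}, the one genuinely new input being that the Ricci-potential bound $(iii)$ replaces the Harnack bound used in \cite{d-h}.
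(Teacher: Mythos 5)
Your argument follows the paper's proof essentially step by step: the same $d_1$-arclength psh geodesic segments, the same compactness extraction of a limiting ray using \cite{bbegz,d-h}, and the same invocation of Theorem~\ref{thm:main in text} to get uniform bounds $d_p(w^{(j)}_s,0)\le sA + B'/T_j$. The step you flag as delicate --- passing the $d_p$-bound to the limit --- is exactly where the paper supplies what you leave as a black box: rather than relying on an unreferenced lower semicontinuity of $d_p(\cdot,0)$ under $d_1$-convergence, the paper first uses \cite[Prop 2.7]{bdl} to place the limit $v_s$ in $\mathcal{E}^p$, and then proves Lemma~\ref{lem:d p conv} (via the two-sided inequalities \ref{eq:Darvas} and a cut at $|v_j-v|=R$), showing that $d_1$-convergence plus a uniform $d_{p+\epsilon}$-bound forces $d_p$-convergence, so one gets the actual limit $d_p(v_s,0)=\lim_j d_p(w^{(j)}_s,0)$. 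Everything else in your write-up matches the paper; your preliminary normalization reduction is correct (thanks to the translation-invariance of $\mathcal{M}$ and $\rho_{u,\gamma}$) but redundant, since Theorem~\ref{thm:main in text} already treats unnormalized $u$ and internalizes that step.
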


Given Theorem \ref{thm:main intro} the proof is similar to the proof
of \cite[Thm 3.2]{d-h} (see also \cite{x} for general results in
Hadamard spaces covering the case $p=2).$ Let $v_{j}(t)$ be the
psh geodesic
\begin{equation}
[0,d_{1}(u_{j},0)]\rightarrow\mathcal{E}^{1}(X,\omega),\,\,\,t\mapsto v_{j}(t)\label{eq:map defined by geodec}
\end{equation}
 coinciding with $0$ and $u_{j}$ at $t=0$ and at $t=d_{1}(u_{j},0),$
respectively (the parametrization has been made so that $v_{j}(t)$
has unit $d_{1}-$speed). We will first show (assuming $(i)$ and
$(ii)$) that there exists a geodesic ray $v(t)$ in $\mathcal{E}^{1}(X,\omega)$
such that $v_{j}(t)\rightarrow v(t)$ in $\mathcal{E}^{1}(X,\omega)$
uniformly on $[0,T],$ for any given finite $T$ - after perhaps passing
to a subsequence. Moreover, if the Ricci potential of $u_{j}$ is
uniformly bounded, then the construction will show that $v_{t}$ is
a $d_{p}-$geodesic ray for any $p\in[1,\infty[.$ To this end fix
$T>0$ and consider $v_{j}(t)$ on $[0,T].$ By construction 
\begin{equation}
t\in[0,T]\implies d_{1}(v_{j}(t),0)=t\leq T\,\,\,\,(\implies\left|\sup v_{j}(t)\right|\leq C_{T})\label{eq:smallet than T}
\end{equation}
 Moreover, by assumption $(ii)$
\begin{equation}
\mathcal{M}(v_{j}(t))\leq C_{0},\,\,\,\,C_{0}:=\max\{C,\text{\emph{\ensuremath{\mathcal{M}}(0)\}}}\label{eq:bound on Mab}
\end{equation}
Indeed, by assumption the estimate holds at $t=T_{j}$ and hence it
holds for $t\leq T_{j}$ by the convexity of $\mathcal{M}$ on $\mathcal{E}^{1}(X,\omega)$
\cite{bdl}. The previous two estimates imply, by the compactness
theorem in \cite{bbegz}, that the maps \ref{eq:map defined by geodec},
defined by the geodesic $v_{j}(t),$ when restricted to $[0,T],$
take values in a fixed compact subset $K_{T}$ of $\mathcal{E}^{1}(X,\omega).$
Since the maps are $1-$Lipschitz (by the very definition of $d_{1}-$geodesics)
it thus follows from the Arzela-Ascoli theorem in metric spaces that
$v_{j}(t)$ converges uniformly on $[0,T]$ to a curve $v(t)$ in
$\mathcal{E}^{1}(X,\omega),$ after perhaps passing to a subsequence.
As a consequence, by \cite[Prop 1.11]{b-b-j}, $v(t)$ is a \emph{psh
geodesic} and, in particular, a $d_{1}-$geodesic in $\mathcal{E}^{1}(X,\omega).$
Using a diagonal argument this yields a $d_{1}-$geodesic ray $v(t)$
with the required properties.

Next, assume a uniform bound $R$ on the twisted Ricci $\rho_{u_{j},\gamma_{j}}$
potentials of $u_{j}$ and that $\gamma_{j}$ is contained in a compact
subset $K$ of $]0,1[.$ Then, by Theorem \ref{thm:main in text},
the bound on $d_{1}(v_{j}(t),0)$ in formula \ref{eq:smallet than T}
yields, since any psh geodesic is a $d_{p}-$geodesic, 
\begin{equation}
t\leq T\implies d_{p}(v_{j}(t),0)=t\frac{d_{p}(u_{j},0)}{d_{1}(u_{j},0)}\leq TA+\frac{B}{d_{1}(u_{j},0)})\label{eq:d p bound in pf geodesic}
\end{equation}
(where $A$ and $B$ depend on $R$ and $p$ and $B$ also depends
on the compact subset $K).$ Hence, by \cite[Prop 2.7]{bdl}, the
$d_{1}-$limit point $v(t)$ is in $\mathcal{E}^{p}(X,\omega)$ for
any $p\geq1.$ Since the previous bound holds for any $p\geq1$ it
follows from Lemma \ref{lem:d p conv} below that, for $t$ fixed,
$v_{j}(t)$ $d_{p}-$converges towards $v(t)$ for any $p\geq1.$
Next, recall that $v_{j}(t)$ has unit $d_{1}-$speed, i.e. $T=d_{1}(v_{j}(T),0)=T.$
Hence, letting $j\rightarrow\infty$ in the bound \ref{eq:d p bound in pf geodesic}
at $t=T$ gives
\[
d_{p}(v(T),0)\leq d_{1}(v(T),0)A,
\]
 which proves the inequality \ref{eq:rever Holder for geod-2}, by
taking $T=1.$ Finally, the bound \ref{eq:bound on Mab} on $\mathcal{M}(v_{j}(t))$
implies, since $\mathcal{M}$ is $d_{1}-$lower semi continuous \cite{bdl},
that $\mathcal{M}(v(t))\leq C_{0}.$ It thus follows from the convexity
of $t\mapsto\mathcal{M}(v(t))$ that $\mathcal{M}(v(t))$ is decreasing
in $t.$ 
\begin{lem}
\label{lem:d p conv}Assume that $v_{j}$ $d_{1}-$converges towards
$v$ and that there exists $\epsilon>0$ such that $d_{p+\epsilon}(v_{j},0)$
is uniformly bounded, $d_{p+\epsilon}(v_{j},0)\leq C_{\epsilon}.$
Then $v_{j}$ $d_{p}-$converges towards $v.$ 
\end{lem}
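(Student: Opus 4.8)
We may assume $p>1$, the case $p=1$ being trivial. The plan is to reduce the statement, via Darvas' comparison inequalities \eqref{eq:Darvas}, to an elementary $L^p$-interpolation, once a uniform higher-integrability bound has been secured. Throughout I would write $\mu_w:=\omega_w^n/V$ for $w\in\mathcal{E}^1(X,\omega)$ and set $q:=p+\epsilon$.

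First I would extract two consequences of the hypotheses. Applying the first inequality in \eqref{eq:Darvas}, with exponent $1$, to the pair $(v_j,v)$ gives
\[
\int_X|v_j-v|\,(\mu_{v_j}+\mu_v)\ \le\ c_{n,1}\,d_1(v_j,v)\ \longrightarrow\ 0;
\]
in particular $\int_X|v_j-v|\,\mu_{v_j}\to 0$ and $\int_X|v_j-v|\,\mu_v\to 0$. Next, since $d_1(v_j,v)\to 0$ and $\sup_jd_q(v_j,0)\le C_\epsilon$, the $d_1$-lower semicontinuity of the $q$-energy (\cite[Prop 2.7]{bdl}) shows that $v\in\mathcal{E}^q(X,\omega)$, so that $M:=\sup_jd_q(v_j,v)\le C_\epsilon+d_q(v,0)<\infty$ by the triangle inequality. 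Applying the first inequality in \eqref{eq:Darvas} with exponent $q$ to $(v_j,v)$ then yields the uniform bound $\int_X|v_j-v|^q(\mu_{v_j}+\mu_v)\le (c_{n,q}M)^q=:M'$.

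Now I would interpolate. By H\"older's inequality (log-convexity of $L^s$-norms), for any finite measure $\mu$, any measurable $f$, and $a:=\epsilon/(q-1)\in(0,1)$,
\[
\int|f|^p\,\mu\ \le\ \Bigl(\int|f|\,\mu\Bigr)^{a}\Bigl(\int|f|^{q}\,\mu\Bigr)^{1-a}.
\]
Taking $f=v_j-v$ and $\mu=\mu_{v_j}$, resp. $\mu=\mu_v$, and inserting the two bounds from the previous step gives $\int_X|v_j-v|^p\mu_{v_j}\le\bigl(\int_X|v_j-v|\mu_{v_j}\bigr)^{a}(M')^{1-a}\to 0$ and, likewise, $\int_X|v_j-v|^p\mu_v\to 0$. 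Finally, the second inequality in \eqref{eq:Darvas} gives $d_p(v_j,v)\le c_{n,p}\bigl(\int_X|v_j-v|^p(\mu_{v_j}+\mu_v)\bigr)^{1/p}\to 0$, which is the assertion.

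The one step that is not purely formal is the uniform higher-integrability bound $\int_X|v_j-v|^q(\mu_{v_j}+\mu_v)\le M'$: because the reference measures $\mu_{v_j}$ vary with $j$, it cannot be read off a fixed $L^q$ estimate, and one genuinely needs that the $d_1$-limit $v$ again lies in $\mathcal{E}^q$, together with the triangle inequality for $d_q$. This is precisely where \cite[Prop 2.7]{bdl} enters; everything else is Darvas' comparison estimates and H\"older interpolation.
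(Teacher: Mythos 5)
Your proof is correct and reaches the conclusion by a slightly different, essentially equivalent route. The paper's own proof works with a level-set truncation at height $R$: on $\{|v_j-v|\leq R\}$ it bounds $|v_j-v|^p\leq R^{p-1}|v_j-v|$ (hence the contribution $\to 0$ as $j\to\infty$, by $d_1$-convergence and the first Darvas inequality), while on $\{|v_j-v|>R\}$ it bounds $|v_j-v|^p\leq R^{-\epsilon}|v_j-v|^{p+\epsilon}$; then it lets $j\to\infty$ first and $R\to\infty$ second. You replace the truncation by the log-convexity (Lyapunov) interpolation inequality
\[
\int|f|^p\,\mu\ \le\ \Bigl(\int|f|\,\mu\Bigr)^{a}\Bigl(\int|f|^{q}\,\mu\Bigr)^{1-a},\qquad a=\frac{q-p}{q-1},
\]
which is the integrated form of the same two-scale estimate, so the analytic content is identical and the two proofs are interchangeable. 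One place where your write-up is actually more careful than the paper's: the paper writes the higher-moment bound as $\int_X|v_j-v|^{p+\epsilon}(\omega_{v_j}^n/V+\omega_v^n/V)\leq C_\epsilon$ with no comment, silently reusing the letter $C_\epsilon$, whereas this bound does not follow from the stated hypothesis $d_{p+\epsilon}(v_j,0)\leq C_\epsilon$ alone; one first needs $v\in\mathcal{E}^{p+\epsilon}(X,\omega)$ (via the $d_1$-lower semicontinuity from \cite[Prop 2.7]{bdl}), then the triangle inequality for $d_{p+\epsilon}$, then the first Darvas inequality \ref{eq:Darvas} at exponent $p+\epsilon$. You make this chain explicit, and you are right to flag it as the only step that is not purely formal.
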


\begin{proof}
We will use the inequalities \ref{eq:Darvas}. Given $R>0$ decompose
\[
\int_{X}|v_{j}-v|^{p}\left(\frac{\omega_{v_{j}}^{n}}{V}+\frac{\omega_{v}^{n}}{V}\right)=\int_{|v_{j}-v|\leq R}|v_{j}-v|^{p}\left(\frac{\omega_{v_{j}}^{n}}{V}+\frac{\omega_{v}^{n}}{V}\right)+\int_{|v_{j}-v|>R}|v_{j}-v|^{p}\left(\frac{\omega_{v_{j}}^{n}}{V}+\frac{\omega_{v}^{n}}{V}\right).
\]
Rewriting $|v_{j}-v|^{p}=R^{P}(|v_{j}-v|/R)^{p},$ the first term
may, since $(|v_{j}-v|/R)^{p}\leq(|v_{j}-v|/R)^{1},$ be estimated
as 
\[
\int_{|v_{j}-v|\leq R}|v_{j}-v|^{p}\left(\frac{\omega_{v_{j}}^{n}}{V}+\frac{\omega_{v}^{n}}{V}\right)\leq R^{p-1}\int_{X}|v_{j}-v|^{1}\left(\frac{\omega_{v_{j}}^{n}}{V}+\frac{\omega_{v}^{n}}{V}\right),
\]
 which converges to zero when $j\rightarrow\infty$, since $v_{j}$
$d^{1}-$converges towards $v.$ Next, using that $1\leq|v_{j}-v|^{\epsilon}R^{-\epsilon},$
when $|v_{j}-v|>R$ the second term above may be estimated by 
\[
R^{-\epsilon}\int_{X}|v_{j}-v|^{p+\epsilon}\left(\frac{\omega_{v_{j}}^{n}}{V}+\frac{\omega_{v}^{n}}{V}\right)\leq R^{-\epsilon}C_{\epsilon}.
\]
 Hence, letting first $j$ and then $R$ tend to infinity concludes
the proof.
\end{proof}

\section{Generalization to singular Fano varieties }

In this section we explain how to extend the previous results to singular
Fano varieties and prove Corollary \ref{cor:Aubin intro}. The results
also extend - with essentially the same proofs - to the more general
twisted setting (considered in \cite{b-b-j} when $X$ is smooth).

We thus let $X$ be a singular Fano variety. This means that $X$
is a normal complex variety such that $K_{X}^{*}$ is defined as an
ample $\Q-$line bundle over $X$ (see \cite{bbegz,d-g,l-t-w,li1}).
Fix a measure $dV_{X}$ on $X$ corresponding to a locally bounded
metric on $K_{X}^{*}$ with positive curvature current, denoted by
$\omega.$ We will assume that $dV_{X}$ has finite total mass. As
is well-known \cite{bbegz} this is equivalent to $X$ having \emph{log
terminal singularities} which will henceforth be assumed. 

We recall that a Kähler form on a normal complex variety $X$ is,
by definition, locally the restriction to $X$ of a Kähler form on
$\C^{M}$ under a local embedding of $X$ as a variety in $\C^{M}.$
The space $\mathcal{H}(X,\omega)$ of Kähler potentials (relative
to $\omega)$ is defined exactly as in the case when $X$ is singular.
Theorem \ref{thm:moment bounds} now extends directly to singular
Fano varieties, using that Prop \ref{prop:concavity} applies to singular
Fano varieties. By \cite{d-g}, all the results in Section \ref{subsec:The-space-of}
on Darvas' $L^{p}-$distances extend to singular normal varieties.
Hence, Theorem \ref{thm:moment bounds} implies, precisely as before,
that Theorem \ref{thm:main in text} holds for any singular Fano variety
with log terminal singularities. In turn, just as before, the latter
theorem implies that Corollary \ref{cor:geodesic rays text} applies
to singular Fano varieties (using the results on singular Fano varieties
in \cite{d-g,bbegz}). 

\subsubsection{Proof of Corollary \ref{cor:Aubin intro} }

Consider now Aubin's equations \ref{eq:Aubin intr} on $X,$ defined
in the weak sense of pluripotential theory \cite{bbegz}, i.e. the
sup-normalized potential $u_{t}$ of $\omega_{t},$ which is in $\text{PSH \ensuremath{(X,\omega)\cap L^{\infty}(X)} },$
satisfies 
\begin{equation}
\omega_{u_{t}}^{n}/V=\mu_{u_{t},t}\label{eq:MA on sing Fano}
\end{equation}
By results in \cite{bbegz} this means, equivalently, that $u_{t}$
minimizes a twisted Mabuchi functional that we shall denote by $\mathcal{M}_{t}.$
Moreover, it follows from results in \cite{l-t-w,li1} (or more precisely
the proof of the main results) that for any $\{1,\delta(X)\}$ is
the sup over all $t\in[0,1[$ for which the equation \ref{eq:MA on sing Fano}
admits a solution $u_{t}$ in $\mathcal{E}^{1}(X,\omega)$ (the assumption
that $X$ has log terminal singularities ensures that $\delta(X)>0$).
The equation \ref{eq:MA on sing Fano} says, in particular, that $\omega_{u_{t}}^{n}$
is, locally on the regular locus of $X,$ absolutely continuous wrt
Lebesgue measure, $\int_{X}e^{-tu_{t}}dV_{X}<\infty$ and $\rho_{u,t}\equiv0.$
Hence, Cor \ref{cor:Aubin intro} follows from Cor \ref{cor:geodesic rays text}
on Fano varieties, once we have verified that 
\begin{equation}
d_{1}(u_{t},0)\rightarrow\infty\label{eq:divergence}
\end{equation}
 as $t\rightarrow\delta(X)$ (since $\sup_{X}u_{t}=0$ the corresponding
geodesic ray $v_{t}$ is then non-trivial in the sense that $v_{t}$
is not of the form $ct$ for any constant $c).$ Assume, in order
to get a contractiction, that $d_{1}(u_{t},0)\leq C,$ , after passing
to a subsequence. Then it follows from results in \cite{bbegz,d-g}
that, after passing to a subsequence, $u_{t}$ $d_{1}-$converges
to $u_{\delta}$ in $\mathcal{E}^{1}(X,\omega),$ minimizing $\mathcal{M}_{\delta(X)}$
and thus $u_{\delta}$ satisfies the equation \ref{eq:MA on sing Fano}
for $t=\delta(X).$ By, assumption, $\delta(X)<1$ and, as a consequence,
any solution of the equation \ref{eq:MA on sing Fano} for $t=\delta(X)$
is uniquely determined. Indeed, since $\omega$ is a positive current
with locally bounded potentials \cite[Thm 11.1]{bbegz} implies, just
as in the case of $t=1$ considered in \cite[Thm 5.1]{bbegz}, that
$u_{\delta}$ is uniquely determined modulo the flow of a holomorphic
vector field $W$ on $X,$ preserving $\omega.$ But this can only
happen if $W$ vanishes identically, as follows from \cite[Prop 8.2]{bern1b}
applied to any non-singular resolution of $X.$ Thus $u_{\delta}$
is uniquely determined. As a consequence - just as in the case that
$t=1$ and there are no holomorphic vector fields, considered in \cite{d-g}
- $\mathcal{M}_{t}$ is coercive on $\mathcal{E}^{1}(X,\omega)$ when
$t=\delta(X).$ Since coercivity is preserved when $t$ is replaced
by $t+\epsilon$ for any sufficently small number $\epsilon$ this
means that $\mathcal{M}_{\delta(X)+\epsilon}$ is coercive for any
sufficently small positive number $\epsilon.$ It thus follows from
\cite{bbegz} that $\mathcal{M}_{\delta(X)+\epsilon}$ has a minimizer,
which satisfies the equation \ref{eq:MA on sing Fano} for $t=\delta(X)+\epsilon.$
But this contradicts the fact that $\min\{1,\delta(X)\}$ is the sup
over all $t\in[0,1[$ for which the equation \ref{eq:MA on sing Fano}
is solvable. This proves the divergence \ref{eq:divergence}.

\section{\label{subsec:Comparison-with-Harnack}Comparison with Harnack type
bounds}

In this section we compare Theorems \ref{thm:main intro} and Corollary
\ref{cor:intro} with the Harnack type bounds in \cite{ba-m,si,t1}
for non-singular $X.$ We start with the following analogs of the
reversed Hölder inequalities in Theorems \ref{thm:moment bounds},
\ref{thm:main intro}.
\begin{prop}
\label{prop:ricci bound below}Let $X$ be a Fano manifold of dimension
and $\omega\in c^{1}(X).$ There exists a constant $B_{n},$ depending
on $n,$ such that 
\begin{equation}
\left\Vert u\right\Vert _{L^{\infty}(X)}\leq\int_{X}(-u)\frac{\omega_{u}^{n}}{V}+\delta^{-1}B_{n}\label{eq:tian}
\end{equation}
 for any $u\in\mathcal{H}(X,\omega)$ satisfying $\sup_{X}u\leq0$
and 
\begin{equation}
\text{Ric \ensuremath{\omega}}_{u}\geq\delta\omega_{u}.\label{eq:Ric bounded from below}
\end{equation}
As a consequence, 
\begin{equation}
d_{p}(u,0)\leq(n+1)d_{1}(u,0)+\delta^{-1}B_{n}.\label{eq:reverse Holder under Ricci curv bound}
\end{equation}
for any $p\in[1,\infty[.$ 
\end{prop}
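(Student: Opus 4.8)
The plan is to first establish the $L^{\infty}$ bound \ref{eq:tian} by a Green function estimate on $(X,\omega_{u})$, and then to deduce \ref{eq:reverse Holder under Ricci curv bound} from it in the same way Theorem \ref{thm:main in text} was deduced from Theorem \ref{thm:moment bounds}, i.e. by feeding it into the two inequalities of Lemma \ref{lem:darvas}. For the first step, observe that $\text{Ric}\,\omega_{u}\geq\delta\omega_{u}$ implies $\text{Ric}\,\omega_{u}\geq0$ and, by Bonnet--Myers applied to the Riemannian metric of $\omega_{u}$, that $\text{diam}(X,\omega_{u})^{2}\leq c_{n}\delta^{-1}$ with $c_{n}$ depending only on $n$. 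Let $\Delta_{\omega_{u}}$ be the operator $f\mapsto\text{tr}_{\omega_{u}}(dd^{c}f)$ (which has non-positive spectrum), and let $G=G_{\omega_{u}}$ be the Green function of $\Delta_{\omega_{u}}$ normalized \emph{against the probability measure} $\omega_{u}^{n}/V$, so that $\int_{X}G(x,\cdot)\,\omega_{u}^{n}=0$ and $f(x)=\int_{X}f\,\frac{\omega_{u}^{n}}{V}+\int_{X}G(x,y)\,\Delta_{\omega_{u}}f(y)\,\frac{\omega_{u}^{n}(y)}{V}$ for smooth $f$; with this convention $G$ is bounded from above. The classical Green function estimate for Kähler manifolds with $\text{Ric}\,\omega_{u}\geq0$ --- the analytic tool underlying the Harnack bounds in \cite{ba-m,si,t1} --- then gives $\sup_{x,y}G(x,y)\leq c_{n}\,\text{diam}(X,\omega_{u})^{2}\leq c_{n}\delta^{-1}$. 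The decisive point is that with the probability-measure normalization this constant does not involve the volume $V=c_{1}(X)^{n}$, hence does not depend on $X$.

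Applying the representation formula to $f=-u$ and using $\Delta_{\omega_{u}}(-u)=\text{tr}_{\omega_{u}}\omega-n$, the fact that the constant $-n$ pairs with $G(x,\cdot)$ to give $0$, and $\text{tr}_{\omega_{u}}\omega\geq0$, one obtains $(-u)(x)\leq\int_{X}(-u)\,\frac{\omega_{u}^{n}}{V}+(\sup G)\int_{X}\text{tr}_{\omega_{u}}\omega\,\frac{\omega_{u}^{n}}{V}$. The last integral is evaluated by Stokes' theorem and $[\omega]=[\omega_{u}]=c_{1}(X)$: since $\text{tr}_{\omega_{u}}\omega\cdot\omega_{u}^{n}=n\,\omega\wedge\omega_{u}^{n-1}$, one has $\int_{X}\text{tr}_{\omega_{u}}\omega\,\frac{\omega_{u}^{n}}{V}=\frac{n}{V}\int_{X}\omega\wedge\omega_{u}^{n-1}=\frac{n}{V}\,c_{1}(X)^{n}=n$. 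Hence $(-u)(x)\leq\int_{X}(-u)\,\frac{\omega_{u}^{n}}{V}+n\,c_{n}\delta^{-1}$ for every $x$; since $\sup_{X}u\leq0$ gives $\|u\|_{L^{\infty}}=\sup_{X}(-u)$, taking the supremum over $x$ yields \ref{eq:tian} with $B_{n}=n\,c_{n}$.

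For \ref{eq:reverse Holder under Ricci curv bound} I would argue as in the proof of Theorem \ref{thm:main in text}: by the first inequality of Lemma \ref{lem:darvas} together with $|u|\leq\|u\|_{L^{\infty}}$, $d_{p}(u,0)^{p}\leq\int_{X}|u|^{p}\,\frac{\omega_{u}^{n}}{V}\leq\|u\|_{L^{\infty}}^{p-1}\int_{X}|u|\,\frac{\omega_{u}^{n}}{V}$; by the second inequality of Lemma \ref{lem:darvas}, $\int_{X}|u|\,\frac{\omega_{u}^{n}}{V}\leq(n+1)d_{1}(u,0)$; and by \ref{eq:tian}, $\|u\|_{L^{\infty}}\leq(n+1)d_{1}(u,0)+\delta^{-1}B_{n}$. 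Writing $D:=(n+1)d_{1}(u,0)$ and $E:=\delta^{-1}B_{n}$, these combine to $d_{p}(u,0)^{p}\leq(D+E)^{p-1}D\leq(D+E)^{p}$, which is \ref{eq:reverse Holder under Ricci curv bound}.

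The step I expect to be the main obstacle is the Green function estimate, in particular the claim that the constant depends only on $n$. This rests on normalizing the Green function against $\omega_{u}^{n}/V$ rather than against the Riemannian volume: it is precisely this choice that makes the volume factor cancel in the classical bound $\sup G\leq c_{n}\,\text{diam}^{2}$ (with the Riemannian normalization one instead acquires an unwanted factor $V^{-1}$). The other ingredients --- Bonnet--Myers, the cohomological identity $\int_{X}\omega\wedge\omega_{u}^{n-1}=c_{1}(X)^{n}$, and the elementary interpolation in the last step --- are routine.
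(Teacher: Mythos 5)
Your proof is correct and follows essentially the same route as the paper, which disposes of inequality \ref{eq:tian} by citing \cite[Prop 3.6]{ba-m} (a Green-function bound for $(X,\omega_u)$) and then derives \ref{eq:reverse Holder under Ricci curv bound} from Lemma \ref{lem:darvas}. You have simply unpacked the cited Green-function bound: your derivation via Bonnet--Myers plus the classical $\sup G\leq c_n\,\mathrm{diam}^2$ estimate for nonnegative Ricci, together with the observation that normalizing $G$ against $\omega_u^n/V$ removes the volume factor, is a standard route to the same statement (the paper's parenthetical instead mentions passing through a uniform $L^2$-Sobolev constant \cite{il}); and your second step --- $d_p(u,0)^p\leq\|u\|_{L^\infty}^{p-1}\int|u|\,\omega_u^n/V$, then Lemma \ref{lem:darvas} and the interpolation $(D+E)^{p-1}D\leq(D+E)^p$ --- is exactly the intended use of Lemma \ref{lem:darvas}.
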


\begin{proof}
The first inequality is shown in \cite[Prop 3.6]{ba-m} (using a uniform
lower bound on the Green function for the Laplacian of $(X,\omega_{u}),$
in terms of the diameter, deduced from the uniform bound on the $L^{2}-$Sobolev
constant of $\omega_{u}$ \cite{il}). The second inequality then
follows from Lemma \ref{lem:darvas}.
\end{proof}
\begin{rem}
\label{rem:Sob}The inequality \ref{eq:tian} is also shown in \cite{t1}
(in the course of the proof of \cite[Thm 2.1]{t1}), but with a constant
$A_{n}$ in front of the integral over $X.$ The proof uses that a
strict lower bound on the Ricci curvature of $\omega_{u}$ implies
a uniform upper bound on the Sobolev constant and the Poincaré constant
of $(X,\omega_{u})$ (so that Moser iteration can be applied). As
shown in \cite{ru} the latter upper bounds hold along the Kähler-Ricci
flow$.$ As a consequence, so does the inequality \ref{eq:tian} (see
Step 3 in the proof of the main result in \cite{ru}). This means,
by the proof of the previous proposition, that the reversed Hölder
inequality \ref{eq:reverse Holder under Ricci curv bound} holds along
the Kähler-Ricci flow (with $A$ and $B$ depending on $X).$ This
was first shown in \cite[Thm 1]{d-h} for a particular normalization
of the potentials, using the uniform Harnack bound discussed below.
\end{rem}

Now consider $\omega_{u_{t}}$ satisfying Aubin's continuity equation.
Then the Ricci curvature bound in the previous proposition automatically
holds when $t\geq\delta>0.$ The reversed Hölder bound \ref{eq:reverse Holder under Ricci curv bound}
thus follows for $u_{t}$ when $t\geq\delta>0.$ As a consequence,
the proof of \ref{cor:geodesic rays text} yields a strong reverse
Hölder inequality for the corresponding destabilizing geodesic ray
appearing in Cor \ref{cor:intro}, for non-singular $X:$ 
\begin{equation}
\left\Vert \dot{v}\right\Vert _{p}\leq(n+1)\left\Vert \dot{v}\right\Vert _{1}.\label{eq:explicit Holder for dest geod}
\end{equation}
While the potential $u_{t}$ is only determined up to an additive
constant (depending on $t),$ the constant is often fixed by demanding
that 
\begin{equation}
\int_{X}e^{-tu_{t}}dV_{X}=1,\label{eq:exp integral is one}
\end{equation}
(where, as before, $\text{Ric \ensuremath{dV_{X}:=\omega}}$). Equivalently,
this means that $u_{t}$ is the unique solution to Aubin's Monge-Ampère
equation \cite{au}:

\begin{equation}
\frac{\omega_{u_{t}}^{n}}{V}=e^{-tu_{t}}dV_{X}.\label{eq:Aubins Monge}
\end{equation}
Note that, by Jensen's inequality, $\sup u_{t}\geq0.$
\begin{prop}
Let $X$ be a Fano manifold of dimension. The following\emph{ Harnack
bound} 
\begin{equation}
-\inf_{X}u\leq A\sup_{X}u+\delta^{-1}B_{n}\label{eq:uniform Har}
\end{equation}
 holds for $u_{t}$ satisfying equation \ref{eq:Aubins Monge} for
$t\geq\delta>0,$ where $A=n$ and $B$ depends on $(X,\omega,\delta).$ 
\end{prop}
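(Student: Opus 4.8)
The plan is to reduce the Harnack bound \eqref{eq:uniform Har} to the combination of the $L^\infty$ estimate \eqref{eq:tian} proved in the previous proposition and a lower bound on $\sup_X u_t$ in terms of the energy $\int_X(-u_t)\,\omega_{u_t}^n/V$. First I would observe that along \eqref{eq:Aubins Monge} the normalization $\int_X e^{-tu_t}\,dV_X=1$ forces $\sup_X u_t\ge 0$ by Jensen's inequality (already noted), and more quantitatively, integrating \eqref{eq:Aubins Monge} against $dV_X$ and using convexity of the exponential gives control in one direction. The key extra input is a reverse inequality: I want to bound $\sup_X u_t$ from below by (a constant multiple of) $\int_X(-u_t)\,\omega_{u_t}^n/V$, up to an additive error of the form $\delta^{-1}B_n$.

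To get that lower bound on $\sup_X u_t$, I would write $u_t=\tilde u_t+\sup_X u_t$ with $\sup_X\tilde u_t=0$, so that $\tilde u_t\le 0$ satisfies \eqref{eq:Ric bounded from below} with $\delta=t$ (since the Ricci curvature condition is translation invariant and holds automatically for Aubin's path when $t\ge\delta$). Applying \eqref{eq:tian} to $\tilde u_t$ yields $\|\tilde u_t\|_{L^\infty}=-\inf_X\tilde u_t\le \int_X(-\tilde u_t)\,\omega_{u_t}^n/V+\delta^{-1}B_n$. Now $\int_X(-\tilde u_t)\,\omega_{u_t}^n/V=\int_X(-u_t)\,\omega_{u_t}^n/V+\sup_X u_t$, and since $-\inf_X\tilde u_t=\sup_X u_t-\inf_X u_t=-\inf_X u_t$ (using $\sup_X u_t\ge 0$ only for sign bookkeeping; actually $-\inf_X\tilde u_t=\sup_X u_t-\inf_X u_t$), rearranging gives $-\inf_X u_t\le \int_X(-u_t)\,\omega_{u_t}^n/V+\sup_X u_t+\delta^{-1}B_n$. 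So the whole problem comes down to bounding $\int_X(-u_t)\,\omega_{u_t}^n/V$ by a multiple of $\sup_X u_t$ plus an admissible error.

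For the remaining estimate $\int_X(-u_t)\,\omega_{u_t}^n/V\le (n-1)\sup_X u_t+\delta^{-1}B_n$ (which, combined with the above, produces $A=n$), I would use the Monge--Amp\`ere equation \eqref{eq:Aubins Monge} to convert the energy integral into $\int_X(-u_t)e^{-tu_t}\,dV_X$, and then exploit that $s\mapsto -s\,e^{-ts}$ is bounded above by $(1/(et))+$ a linear correction, together with $\int_X e^{-tu_t}\,dV_X=1$; this controls the part of the integral where $u_t\ge 0$. On the region $\{u_t\le 0\}$ one has $-u_t\le \|u_t\|_{L^\infty}$, and here I would again feed back \eqref{eq:tian} applied to $\tilde u_t$, or directly use the Green's function / Sobolev estimate underlying \eqref{eq:tian} as in \cite{ba-m,t1}, to absorb this contribution into $C\sup_X u_t+\delta^{-1}B_n$. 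Collecting terms yields the claimed bound with $A=n$ and $B$ depending on $(X,\omega,\delta)$; tracking constants through the Sobolev/Green's function estimate is where the dependence on $(X,\omega)$ and $\delta^{-1}$ enters.

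The main obstacle I anticipate is not the formal manipulation but getting the \emph{constant} $A=n$ (rather than some larger dimensional constant) in the final inequality: this requires being careful about how the two applications of \eqref{eq:tian} interact and using the sharp form of the Green's function lower bound of \cite{ba-m} (which is what gives $A_n=1$ in front of the energy there, as opposed to the weaker constant in \cite{t1}). A secondary subtlety is ensuring the error term genuinely scales like $\delta^{-1}$ rather than, say, $\delta^{-2}$; this follows from the fact that \eqref{eq:tian} already has the clean $\delta^{-1}$ scaling, so as long as I do not iterate \eqref{eq:tian} in a way that compounds the $\delta^{-1}$ factor, the bound \eqref{eq:uniform Har} comes out as stated.
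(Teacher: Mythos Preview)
Your first reduction---applying \eqref{eq:tian} to $\tilde u_t = u_t - \sup_X u_t$---is exactly how the paper begins, and (after a small slip: the two copies of $\sup_X u_t$ cancel, so you actually obtain the cleaner bound $-\inf_X u_t \le \int_X(-u_t)\,\omega_{u_t}^n/V + \delta^{-1}B_n$) it correctly reduces the problem to bounding $\int_X(-u_t)\,\omega_{u_t}^n/V$ from above.

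The gap is in that second step. Rewriting the integral as $\int_X(-u_t)e^{-tu_t}\,dV_X$ and using the pointwise bound $-u_t \le -\inf_X u_t$ on $\{u_t<0\}$ together with $\int_X e^{-tu_t}dV_X=1$ only gives $\int_X(-u_t)\,\omega_{u_t}^n/V \le -\inf_X u_t$; feeding \eqref{eq:tian} back in then yields the vacuous inequality $\int_X(-u_t)\,\omega_{u_t}^n/V \le \int_X(-u_t)\,\omega_{u_t}^n/V + \delta^{-1}B_n$. No pointwise estimate on $(-s)e^{-ts}$ for $s<0$ can close this loop, so the argument as written does not produce \emph{any} finite $A$, let alone $A=n$. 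What the paper uses instead is the variational structure of Aubin's path: Lemma~\ref{lem:darvas} gives $\int_X(-\tilde u_t)\,\omega_{u_t}^n/V \le (n+1)\bigl(\sup_X u_t - \mathcal{E}(u_t)\bigr)$, and the key missing input is the uniform bound $-\mathcal{E}(u_t)\le C$. This comes from the twisted Ding functional $\mathcal{D}_t(u)=-\mathcal{E}(u)-t^{-1}\log\int_X e^{-tu}dV_X$: the normalization \eqref{eq:exp integral is one} makes $\mathcal{D}_t(u_t)=-\mathcal{E}(u_t)$, while the minimizing property of $u_t$ and the monotonicity of $t\mapsto\mathcal{D}_t$ (H\"older) give $\mathcal{D}_t(u_t)\le\mathcal{D}_t(u_0)\le\mathcal{D}_0(u_0)=:C$. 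Substituting yields $-\inf_X u_t \le n\sup_X u_t + (n+1)C + \delta^{-1}B_n$, which is \eqref{eq:uniform Har} with $A=n$.
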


\begin{proof}
A slightly weaker inequality appears in \cite[Prop 2.1]{si} and closely
related inequalities also appear in \cite{t1}. Here we note that
the proposition also follows from the inequality \ref{eq:tian}, applied
to the function $\sup_{X}u_{t}-u_{t},$ which gives 
\[
\sup_{X}u_{t}-\inf_{X}u_{t}\leq(n+1)(\sup_{X}u_{t}-\mathcal{E}(u_{t}))+B
\]
 where $\mathcal{E}(u)$ is the functional appearing in formula \ref{eq:primitive appears}.
The proof is thus concluded by noting that $-\mathcal{E}(u_{t})$
is uniformly bounded from above, by the bound \ref{eq:primitive leq C}
below. 
\end{proof}
In general, as shown \cite{d-h} (in the context of the Kähler-Ricci
flow), a uniform Harnack bound implies that
\begin{equation}
d_{p}(u,0)\leq A_{p,\delta}\sup_{X}u+B_{p,\delta},\label{eq:d p distance smaller than sup}
\end{equation}
using the inequalities \ref{eq:Darvas}; see \cite[Thm 3.1]{d-h}
and its proof. However, for singular $X$ the problem of establishing
a Harnack bound along Aubin's complex Monge-Ampère equation appears
to be wide open. Still, Theorem \ref{thm:moment bounds} implies that
a weak Harnack bound holds, which implies the inequality \ref{eq:d p distance smaller than sup},
assuming that $\delta(X)<1:$ 
\begin{prop}
\label{prop:Aubin's MA}Let $X$ be Fano variety with log terminal
singularities and let $u_{t}\in L^{\infty}(X)\cap\text{PSH \ensuremath{(X,\omega)}}$
be the solution to Aubin's Monge-Ampère equation \ref{eq:Aubins Monge},
for $t\in]0,\delta(X)[.$ Then, for any given $p\in[1,\infty[,$ there
exists constants $a_{p}$ and $b_{p}$ such that the following\emph{
weak Harnack bound }holds:
\begin{equation}
\left(\int_{X}\left|u_{t}\right|^{p}\frac{\omega_{u_{t}}^{n}}{V}\right)^{1/p}\leq a_{p}(n+1)\sup u_{t}+b_{p}\left(t^{-1}+(1-t)^{-1}\right),\label{eq:weak Harnack}
\end{equation}
 where $a_{p}$ depends only on $p$ and $b_{p}$ on $(p,X,\omega).$
As a consequence 
\[
B^{-1}\sup_{X}u_{t}-B\leq d_{p}(u_{t},0)\leq A\sup_{X}u_{t}+B\left(t^{-1}+(1-t)^{-1}\right)
\]
 for a constant $A$ only depending on $(p,n)$ and a constant $B$
also depending on $(X,\omega).$ 
\end{prop}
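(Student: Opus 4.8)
The plan is to deduce Proposition \ref{prop:Aubin's MA} from the already-established moment bounds (Theorem \ref{thm:moment bounds}) together with the Darvas comparison inequalities \eqref{eq:Darvas}. The starting point is the defining equation \eqref{eq:Aubins Monge}: since $u_t$ solves $\omega_{u_t}^n/V = e^{-tu_t}dV_X$, the probability measure $\mu_{tu_t} = e^{-tu_t}dV_X/\int_X e^{-tu_t}dV_X$ is, up to the normalizing constant, exactly $\omega_{u_t}^n/V$; indeed, integrating \eqref{eq:Aubins Monge} over $X$ gives $\int_X e^{-tu_t}dV_X = 1$, so $\mu_{tu_t} = \omega_{u_t}^n/V$ precisely. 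Equivalently, $\rho_{u_t,t}\equiv 0$, so Theorem \ref{thm:main in text} applies with $\gamma = t$ and all the exponential factors equal to $1$.

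However, the subtlety is that Theorem \ref{thm:moment bounds} is stated for $u$ with $\sup_X u\le 0$, whereas here $\sup_X u_t\ge 0$ by Jensen. So first I would pass to the sup-normalized potential $\tilde u_t := u_t - \sup_X u_t$, which satisfies $\sup_X \tilde u_t = 0$ and lies in $\text{PSH}(X,\omega)\cap L^\infty(X)$ with $\omega_{\tilde u_t}^n = \omega_{u_t}^n$ absolutely continuous on the regular locus and $\int_X e^{-t\tilde u_t}dV_X = e^{t\sup_X u_t}\int_X e^{-tu_t}dV_X<\infty$. Applying Theorem \ref{thm:moment bounds} to $\tilde u_t$ with $\gamma = t$ gives
\[
\left(\int_X(-\tilde u_t)^p\,\frac{\omega_{u_t}^n}{V}\right)^{1/p}\le A_p\int_X(-\tilde u_t)\,\frac{\omega_{u_t}^n}{V} + B_p\left(t^{-1}+(1-t)^{-1}\right),
\]
using again that $e^{\rho_{\tilde u_t,t}}\omega_{u_t}^n/V = \mu_{t\tilde u_t}$ with $\rho_{\tilde u_t,t}$ a constant (so the two measures agree). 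Then by the second inequality in Lemma \ref{lem:darvas} (applied to $\tilde u_t$, which has $\sup\le 0$), $\int_X(-\tilde u_t)\,\omega_{u_t}^n/V\le (n+1)d_1(\tilde u_t,0)$; and by \eqref{eq:primitive appears}/\cite[Cor 4.14]{da}, $d_1(\tilde u_t,0) = -\mathcal{E}(\tilde u_t)$. The remaining point is to control $-\mathcal{E}(\tilde u_t)$ by $\sup_X u_t$: since $\mathcal{E}$ is translation-equivariant, $\mathcal{E}(\tilde u_t) = \mathcal{E}(u_t) - \sup_X u_t$, and $-\mathcal{E}(u_t)$ is uniformly bounded above by the bound \eqref{eq:primitive leq C} invoked below (which holds because the twisted Mabuchi energy $\mathcal{M}_t$ is bounded below along the continuity path for $t<\delta(X)$, forcing the Monge--Ampère energy to stay controlled). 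Hence $-\mathcal{E}(\tilde u_t)\le \sup_X u_t + C$, and putting everything together, together with $|u_t|\le |\tilde u_t| + \sup_X u_t$ and the elementary $\|\cdot\|_{L^p}$ triangle inequality, yields \eqref{eq:weak Harnack} with $a_p$ depending only on $p$ and $b_p$ on $(p,X,\omega)$.

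For the displayed consequence, the upper bound on $d_p(u_t,0)$ follows from the upper Darvas inequality in \eqref{eq:Darvas}: $d_p(u_t,0)\lesssim_{n,p}\big(\int|u_t|^p(\omega_{u_t}^n/V + \omega^n/V)\big)^{1/p}$, where the $\omega_{u_t}^n$-term is controlled by \eqref{eq:weak Harnack} and the $\omega^n$-term by the $L^\infty$ bound on $u_t$ coming from \eqref{eq:weak Harnack} combined with \eqref{eq:uniform Har}-type control (or more directly, by bounding $\int|u_t|^p\omega^n$ via $-\inf u_t$ and the weak Harnack inequality itself). The lower bound $B^{-1}\sup_X u_t - B\le d_p(u_t,0)$ follows from $d_p\ge d_1$ together with \eqref{eq:bounds of sup in terms of d one} applied to $u_t$, which gives $|\sup_X u_t|\le c_{n,1}d_1(u_t,0) + C_\omega\le c_{n,1}d_p(u_t,0)+C_\omega$. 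The step I expect to be the main obstacle is the uniform control of $-\mathcal{E}(u_t)$ (equivalently the bound \eqref{eq:primitive leq C}): one must show that as $t$ increases toward $\delta(X)$ the Monge--Ampère energy of the continuity-path solutions does not blow up in the wrong direction, which rests on the boundedness from below of the twisted Mabuchi functional $\mathcal{M}_t$ on $\mathcal{E}^1(X,\omega)$ for $t<\delta(X)$ (from \cite{bbegz}) and the fact that $u_t$ minimizes it; making this quantitative and uniform in $t$ on compact subsets of $]0,\delta(X)[$ is the delicate ingredient, everything else being a bookkeeping assembly of the cited inequalities.
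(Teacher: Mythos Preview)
Your overall architecture is the paper's: pass to $\tilde u_t=u_t-\sup_X u_t$, apply the moment bounds (Theorem~\ref{thm:moment bounds}/\ref{thm:main in text}) with $\gamma=t$ and vanishing twisted Ricci potential, then reduce everything to a uniform upper bound on $-\mathcal{E}(u_t)$. The assembly via Lemma~\ref{lem:darvas} and $\mathcal{E}(\tilde u_t)=\mathcal{E}(u_t)-\sup_X u_t$ is exactly right.

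The genuine gap is in the step you yourself flag as the main obstacle: the bound $-\mathcal{E}(u_t)\le C$. Your proposed mechanism (boundedness from below of the twisted Mabuchi functional $\mathcal{M}_t$) does not yield this; a lower bound on $\mathcal{M}_t$ says nothing about an \emph{upper} bound on $-\mathcal{E}(u_t)$, and even the inequality $\mathcal{M}_t(u_t)\le\mathcal{M}_t(0)$ does not directly isolate $\mathcal{E}$ because of the entropy term. The paper instead uses the twisted \emph{Ding} functional
\[
\mathcal{D}_t(u):=-\mathcal{E}(u)-t^{-1}\log\int_X e^{-tu}\,dV_X,
\]
which is decreasing in $t$ (by H\"older) and is minimized by $u_t$. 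Together with the normalization $\int_X e^{-tu_t}dV_X=1$ this gives the one-line chain
\[
-\mathcal{E}(u_t)=\mathcal{D}_t(u_t)\le \mathcal{D}_t(u_0)\le \mathcal{D}_0(u_0)=:C,
\]
uniform in $t\in]0,\delta(X)[$. This is the missing ingredient.

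A smaller point: your treatment of the consequence is overcomplicated and partly circular. You do not need to control $\int|u_t|^p\omega^n$, nor any $L^\infty$/Harnack bound (which, in the singular case, is precisely what is unavailable). Since $\sup_X\tilde u_t=0$, the first inequality in Lemma~\ref{lem:darvas} gives directly
\[
d_p(\tilde u_t,0)\le\Big(\int_X|\tilde u_t|^p\,\frac{\omega_{u_t}^n}{V}\Big)^{1/p},
\]
and then $d_p(u_t,0)\le d_p(\tilde u_t,0)+\sup_X u_t$, which combined with \eqref{eq:weak Harnack} yields the upper bound. Your lower bound via \eqref{eq:bounds of sup in terms of d one} and $d_1\le d_p$ is correct.
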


\begin{proof}
According to the inequalities \ref{eq:Darvas} it will be enough to
establish the first inequality. Applying Theorem \ref{thm:main in text}
to $u_{t}-\sup_{X}u_{t}$ and using the triangle inequality yields
\[
\left(\int_{X}\left|u_{t}\right|^{p}\frac{\omega_{u_{t}}^{n}}{V}\right)^{1/p}\leq\left|\sup_{X}u_{t}\right|+A_{p}(n+1)\left(\sup_{X}u-\mathcal{E}(u)\right)+B_{p}\left(t^{-1}+(1-t)^{-1}\right),
\]
 where $\mathcal{E}(u)$ is the functional appearing in formula \ref{eq:primitive appears}.
All that remains is thus to show that 
\begin{equation}
-\mathcal{E}(u_{t})\leq C.\label{eq:primitive leq C}
\end{equation}
To this end consider the \emph{twisted Ding functional }defined by
\begin{equation}
\mathcal{D}_{t}(u):=-\mathcal{E}(u)-t^{-1}\log\int_{X}e^{-tu}dV_{X}\label{eq:def of Ding}
\end{equation}
for a given $t\in[0,1[.$ We define $\mathcal{D}_{0}(u)$ as the limit
of $\mathcal{D}_{t}(u)$ as $t$ decreases to $0,$ which amounts
to replacing the second term in the definition of $\mathcal{D}_{t}(u)$
by $\int_{X}udV_{X}.$ Note that $\mathcal{D}_{t}(u)$ is decreasing
in $t,$ as follows directly from Hölder's inequality. Moreover, $u_{t}$
minimizes $\mathcal{D}_{t}(u)$ (see \cite{bbegz}). Hence, 
\[
\mathcal{D}_{1}(u_{t})\leq\mathcal{D}_{t}(u_{t})\leq\mathcal{D}_{t}(u_{0})\leq C:=\mathcal{D}_{0}(u_{0})
\]
By formula \ref{eq:exp integral is one}, this proves the bound \ref{eq:primitive leq C}. 
\end{proof}
\begin{rem}
The terminilogy of Harnack bounds and weak Harnack bounds adopted
here mimics the corresponding terminilogy for positive solutions and
supersolutions to linear elliptic equations, where the role of $u$
is played by $-u$ (cf. \cite[Cor 10]{tr} and \cite[Thm 9]{tr},
respectively). However, in the present setup $u$ does not have a
fixed sign, which effects the formulation of the bounds. 
\end{rem}

There are some intruiging connections between the proof of the Harnack
bound \ref{eq:uniform Har} and the weak Harnack bound in Prop \ref{prop:Aubin's MA}.
As discussed in Remark \ref{rem:Sob} the Harnack bound follows from
bounds on the Sobolev and Poincaré constants, which in turn follow
from a strictly positive uniform lower bound on the Ricci curvature
of $(X,\omega_{u}).$ The latter bounds have been extended to complete
metric spaces $(X,d)$ using various generalized notions of Ricci
curvature, defined in terms of the convexity of the entropy functional
on the space of all probability measures on $X,$ endowed with the
$L^{2}-$Wasserstein metric \cite[Section 30]{v}\cite[Prop 3.3]{pr}.
When $X$ is a singular Fano variety and $\omega$ is taken to be
a Kähler form the positive current $\omega_{u_{t}}$ defines a Kähler
metric on the regular locus $X_{\text{reg}}$ of $X.$ However, $(X_{\text{reg }},\omega_{t})$
is not complete and its seems to be unknown whether the metric completion
of $(X_{\text{reg }},\omega_{t})$ satisfies any kind of Ricci curvature
bound in the sense of metric spaces. From this point of view the key
advantage of the method of proof of Prop \ref{prop:Aubin's MA} is
that it is based on the convexity of the functional appearing in formula
\ref{eq:complex prekopa}, which holds when $\text{Ric}dV\geq0$ in
the general setup of singular Fano varieties. Incidently, this functional
is precisely the Legendre-Fenchel tranform of the entropy functional
appearing in the definition of Ricci curvature of metric spaces.

\subsection{\label{subsec:Aubin-type-equations}Aubin type equations in the absence
of positive Ricci curvature }

Given a Kähler form $\omega$ in $c_{1}(X),$ $F\in C^{\infty}(X)$
and $t>0$ consider the following complex Monge-Ampère equations for
$u_{t}\in\mathcal{H}(X,\omega):$
\begin{equation}
\omega_{u_{t}}^{n}=e^{-tu_{t}}e^{F}\omega^{n}.\label{eq:MA eq with F}
\end{equation}
This equation has been studied extensively when $n=1$ motivated,
in particular, by Nirenberg's problem of prescribing the scalar curvature
of conformal metrics on the two-sphere and the Chern-Simons Higgs
model (see, for example, the blow-up analysis in \cite{d-j-l-w,LiY}).

When $F$ is the Ricci potential of $\omega,$ $F=\rho_{\omega},$
the equation \ref{eq:MA eq with F} is precisely Aubin's Monge-Ampère
equation \ref{eq:Aubins Monge}. However, in general, unless $F$
is constant, $\omega_{u_{t}}$ does not have positive Ricci curvature.
As a consequence, the inequalities in Proposition \ref{prop:ricci bound below},
do not apply. But Theorem \ref{thm:moment bounds} directly yields
\[
\left(\int_{X}(\sup u_{t}-u_{t})^{p}\frac{\omega_{u_{t}}^{n}}{V}\right)^{1/p}\leq e^{2\left\Vert F-\rho_{\omega}\right\Vert _{L^{\infty}}}\left(A\int_{X}(\sup u_{t}-u_{t})\frac{\omega_{u_{t}}^{n}}{V}+B\right),
\]
where $A=A_{p}$ and $B=B_{p}\left(\gamma^{-1}+(1-\gamma)^{-1}\right)$
for the constants $A_{p}$ and $B_{p}$ appearing in Theorem \ref{thm:moment bounds}.
As a consequence, if $u_{t}$ minimizes the corresponding twisted
Ding functional $\mathcal{D}_{t}$ (defined by replacing $dV_{X}$
in formula \ref{eq:def of Ding} with $e^{F}\omega^{n})$ then the
proof of Proposition \ref{prop:Aubin's MA} reveals that $u_{t}$
satisfies a weak Harnack bound of the form \ref{eq:weak Harnack},
obtained by multiplying $a_{p}$ and $b_{p}$ with $e^{2\left\Vert F-\rho_{\omega}\right\Vert _{L^{\infty}}}.$
It should be stressed that the minimizing property in question is
not automatic, unless $F$ is constant (but, by \cite{berm0}, any
minimizer $u_{t}$ satisfies the equation \ref{eq:MA eq with F},
under the normalization condition $\int e^{-tu_{t}}e^{F}\omega^{n}=1$).
When $n=1$ the stronger Harnack bound \ref{eq:uniform Har} holds,
with constants $A$ and $B$ depending on $\left\Vert F\right\Vert _{L^{\infty}}.$
This follows from the local results in \cite{sh} , confirming a conjecture
in \cite{b-m}. The proof in \cite{sh} uses the Alexandrov--Bol
isoperimetric inequality for surfaces. 

\section{Comparison with Duistermaat-Heckman type measures and log-concavity }

\subsection{\label{subsec:Test-configurations-and}Test configurations and K-stability}

Let $L$ be an ample line over a compact complex manifold $X.$ Recall
that a \emph{test configuration} $(\mathcal{X},\mathcal{L})$ for
$(X,L)$ (as appearing the definition of K-stability, discussed below)
may be defined as a $\C^{*}-$equivariant embedding 
\[
(X\times\C^{*},L)\hookrightarrow(\mathcal{X},\mathcal{L})
\]
 of the polarized trivial fibration $(X\times\C^{*},L)$ over $\C^{*}$
into a normal variety $\mathcal{X},$ fibered over $\C,$ endowed
with a relatively ample $\Q-$line bundle \emph{$\mathcal{L}$ }and
a $\C^{*}-$action on $(\mathcal{X},\mathcal{L})$ covering the standard
$\C^{*}-$action on $\C.$ In particular, there is a $\C^{*}-$action
on the scheme defined by the central fiber $(\mathcal{X}_{0},\mathcal{L}_{0}).$
Given $\omega\in c_{1}(L)$ a test configuration induces, by \cite{p-s},
a psh geodesic ray $u_{t}$ (emanating from $0)$ which is in $\mathcal{E}^{p}(X,\omega)$
and thus defines a $d_{p}-$geodesic, for any $p.$ Indeed, $u_{t}$
may be defined as an envelope (as in formula \ref{eq:psh geodes})
over all $v_{t}$ such that the corresponding $\omega-$psh function
$V(x,\tau)$ on $X\times\C_{\tau}^{*},$ where $\tau:=e^{-t},$ has
the property that $dd^{c}V+\omega$ is the restriction to $X\times\C^{*}$
of the curvature form of a locally bounded and positively curved metric
on $\mathcal{L}\rightarrow\mathcal{X}.$ Set 
\[
\dot{u}:=\frac{du_{t}}{dt}|_{t=0},\,\,\,\,\mu:=\dot{u}_{*}(\frac{\omega^{n}}{V}),
\]
 By \cite{c-t-w}, $\dot{u}\in L^{\infty}(X)$, which ensures that
the push-forward $\dot{u}_{*}(\frac{\omega^{n}}{V})$ is well-defined.
As in \cite{bern2}, the $d_{p}-$speed of the geodesic $u_{t}$ may
be expressed as

\begin{equation}
\left\Vert \dot{u}\right\Vert _{p}:=d_{p}(u_{1},0)=\left(\int_{X}\left|\dot{u}\right|^{p}\frac{\omega^{n}}{V}\right)^{1/p}=\left(\int_{\R}|t|^{p}d\mu\right)^{1/p}.\label{eq:speed for test}
\end{equation}
By the main result of \cite{hi} - proving a conjecture in \cite{wi}
- the probability measure $\mu$ on $\R$ can be expressed as the
following weak limit of weight measures:
\[
\mu=\lim_{k\rightarrow\infty}\frac{1}{N_{k}}\sum_{i=1}^{N_{k}}\delta_{\lambda_{i}^{(k)}/k}
\]
where the real numbers $\lambda_{1}^{(k)},...,\lambda_{N_{k}}^{(k)}$
are the weights of the $\C^{*}-$action on the complex vector space
$H^{0}(\mathcal{X}_{0},\mathcal{L}_{0}^{\otimes k}).$ This limit
is called the\emph{ Duistermaat-Heckman measure} of $(\mathcal{X},\mathcal{L})$
in \cite{bhj}. In the terminology of \cite{do2,bhj} $\left\Vert \dot{u}\right\Vert _{p}$
thus coincides with the $L^{p}-$norm $\left\Vert (\mathcal{X},\mathcal{L})\right\Vert _{L^{p}}$
of the test configuration $(\mathcal{X},\mathcal{L}).$ 

When the central fiber of $\mathcal{X}_{0}$ is reduced and irreducible
it follows from the main result of \cite{ok} that the probability
measure $\mu$ on $\R$ is log-concave. More precisely, it is shown
in \cite{ok} that $\mu$ is the push-forward to $\R$ of the uniform
measure on a convex body in $\R^{n}$ under a linear map. Log-concavity
then follows directly from the classical Brunn-Minkowski theorem.
In particular, by the reverse Hölder inequality \ref{eq:reversed H=0000F6lder log concave}
for log-concave measures, there exists, for any given $p\in[1,\infty[,$
a constant $C_{p}$ (only depending on $p)$ such that
\begin{equation}
\left\Vert (\mathcal{X},\mathcal{L})\right\Vert _{L^{p}}\leq C_{p}\left\Vert (\mathcal{X},\mathcal{L})\right\Vert _{L^{1}}\label{eq:reversed test}
\end{equation}
This inequality does not seem to have been observed before in this
context. But as pointed out the author by Sébastien Boucksom and Mattias
Jonsson, it is closely related to the inequality in \cite[Lemma 3.14]{b-j}.
Indeed, the latter inequality yields the strong reverse Hölder bound,
\begin{equation}
\left\Vert (\mathcal{X},\mathcal{L})\right\Vert _{L^{p}}\leq(n+1)\left\Vert (\mathcal{X},\mathcal{L})\right\Vert _{L^{1}}\label{eq:reverse test with n plus one}
\end{equation}
 for all $p\geq1$ if $\mu$ is supported in $[0,\infty[.$ In particular,
the corresponding constant is uniformly bounded with respect to $p.$
But in contrast to the inequality \ref{eq:reversed test} the bound
depends on $n.$ Moreover, similar inequalities also appear in \cite{zh},
as pointed out the the author by Tamas Darvas. The proof of the bound
\ref{eq:reverse test with n plus one} exploits that the $n$th root
of $\mu(]t,\infty[)$ is concave, as a consequence of the Brunn-Minkowski
theorem.
\begin{rem}
Note that the constant $(n+1)$ in the bound \ref{eq:reverse test with n plus one}
for $(\mathcal{X},\mathcal{L})$ also appears in the reverse Hölder
bound \ref{eq:explicit Holder for dest geod} for the destabilizing
geodesic ray $v_{t}$ that is weakly asymptotic to Aubin's continuity
path. This is in line with the discussion on the partial $C^{0}-$estimate,
following Corollary \ref{cor:Aubin intro}. Indeed, if the assumptions
on the corresponding curve $G_{t}$ in $\text{GL (}N_{k},\C)$ would
apply, then the bound \ref{eq:explicit Holder for dest geod} for
$v_{t}$ would follow from the bound \ref{eq:reverse test with n plus one}
applied to the special test-configuration associated to $G_{t}.$ 
\end{rem}

It should be stressed that the bound \ref{eq:reversed test} does
not hold for\emph{ all} test configurations. Indeed, by the example
in \cite[Prop 8.5]{bhj}, the inequality fails for $p>n/(n-1)$ when
$\mathcal{X}$ is taken to be the deformation to the normal cone of
a given point $x$ on $X$, i.e. $p:\mathcal{X}\rightarrow X\times\C$
is the blow-up of $\{x\}\times\{0\}$ in $X\times\C$ and $\mathcal{L}_{\epsilon}:=p^{*}L-\epsilon E,$
where $E$ denotes the exceptional divisor over $p$ and $\epsilon$
is a given sufficiently small positive rational number. In particular,
in this example (where$\mathcal{X}_{0}$ is reduced, but has two components)
$\mu$ can not be log-concave. 

\subsubsection{Relations to K-stability}

Recall that, in the context of the Yau-Tian-Donaldson conjecture \cite{c-d-s,d-s,b-b-j},
a Fano manifold $(X,K_{X}^{*})$ is said to be\emph{ K-stable} if
the Donaldson-Futaki invariant $\text{DF }(\mathcal{X},\mathcal{L})$
is strictly positive for all non-trivial test configurations and \emph{uniformly
K-stable} if there exists a constant $\epsilon>0$ such that|
\[
\text{DF }(\mathcal{X},\mathcal{L})\geq\epsilon\left\Vert (\mathcal{X},\mathcal{L})\right\Vert _{1},
\]
 where $\left\Vert (\mathcal{X},\mathcal{L})\right\Vert _{p}$ is
defined, in terms of the Duistermaat-Heckman $\mu,$ discussed above,
as 
\[
\left\Vert (\mathcal{X},\mathcal{L})\right\Vert _{p}:=\left(\int_{\R}|t-c|^{p}\mu\right)^{1/p},\,\,\,c:=\int_{\R}\mu.
\]
 By \cite{l-x-z}, $(X,K_{X}^{*})$ is, in fact, K-stable iff it is
uniformly K-stable. Moreover, by \cite{l-x}, one may when testing
(uniform) K-stability restrict to \emph{special} test konfigurations
$(\mathcal{X},\mathcal{L}),$ i.e. such that $\mathcal{X}_{0}$ has
log terminal singularities (and, in particular, is reduced and irreducible).
But in this case $\mu$ is, as pointed out above, log-concave and,
as a consequence, so is the translation of $\mu$ by $c.$ Hence,
as explained above, for any $p\in[1,\infty[$ there exists a constant
$C_{p}$ such that
\[
\left\Vert (\mathcal{X},\mathcal{L})\right\Vert _{p}\leq C_{p}\left\Vert (\mathcal{X},\mathcal{L})\right\Vert _{1}
\]
 for all special test configurations. All in all, this means that
\[
(X,K_{X}^{*})\,\text{is K-stable \ensuremath{\iff\forall p\in[1,\infty[\,\exists\epsilon_{p}\in]0,\infty[:\,\text{DF }(\mathcal{X},\mathcal{L})\geq\epsilon_{p}\left\Vert (\mathcal{X},\mathcal{L})\right\Vert _{p}} }
\]
for all non-trivial special test configurations

\subsection{\label{subsec:Failure-of-log-concavity}Failure of log-concavity
for general geodesic segments }

Given $u\in\mathcal{H}(X,\omega),$ let $u_{t}$ be the psh geodesic
coinciding with $0$ and $u$ at $t=0$ and $t=1,$ respectively.
Then 
\[
d_{p}(u,0)=\left(\int_{X}\left|\dot{u}\right|^{p}\frac{\omega^{n}}{V}\right)^{1/p}=\left(\int_{\R}|t|^{p}d\mu\right)^{1/p}
\]
 using that $\dot{u}\in L^{\infty}(X)$ \cite{bern2}. However, in
general, $\mu$ is\emph{ not }log-concave. Indeed, assume in order
to get a contraction that $\mu$ is log-concave. Then the reverse
Hölder inequality \ref{eq:reversed H=0000F6lder log concave} for
log-concave measures implies that

\[
d_{p}(u,0)\leq C_{p}d_{1}(u,0)
\]
 for a $C_{p}$ only depending on $p.$  But such a reverse Hölder
inequality does not hold, in general, as stressed in the introduction
of the paper. 

\section{\label{sec:Effective-openness}Effective openness}

The non-effective openness result \ref{eq:openess} may be reformulated
as 
\begin{equation}
\lim_{\gamma\rightarrow c_{u}}Z_{u}(\gamma)=\infty,\,\,\,Z_{u}(\gamma):=\int_{X}e^{-\gamma u}dV\label{eq:Z tends to infinity}
\end{equation}
We will prove the following reformulation of the effective openness
in Theorem \ref{thm:intro}:
\begin{thm}
\label{thm:effective text}Let $X$ be a Fano variety. Assume that
$u\in\text{PSH}(X,\omega)$ satisfies $c_{u}<1$ and $\sup u\leq0.$
Then there exist universal constants $A$ and $B$ such that
\[
\frac{d\log Z_{u}(\gamma)}{d\gamma}\geq\frac{1}{A}\frac{1}{(c_{u}-\gamma)}-B\left(\gamma^{-2}+(1-c_{u})^{-2}\right),
\]
 for any positive $\gamma$ such that $Z_{u}(\gamma)<\infty.$ In
other words, setting $C:=AB,$
\[
(c_{u}-\gamma)\geq\frac{1}{A\frac{d\log Z_{u}(\gamma)}{d\gamma}+C\left(\gamma^{-2}+(1-c_{u})^{-2}\right)}.
\]
The constant $A$ can be taken arbitrarily close to $16.$ 
\end{thm}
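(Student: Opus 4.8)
The plan is to reduce the statement to the log-concavity machinery already established in Theorem \ref{Thm:Z as log concave}, together with the effective Kahane--Khinchin inequality. First I would invoke Theorem \ref{Thm:Z as log concave}: for $\gamma\in\,]0,1[$ with $Z_u(\gamma)<\infty$ we have $Z_u(\gamma)=G(\gamma)\int_{\R}e^{-t\gamma}\nu_0$ for a log-concave measure $\nu_0$ on $\R$, and $G$ is smooth on $[0,1[$ with $G(\gamma)=\gamma/(\Gamma(1-\gamma)/2)$. Taking logarithmic derivatives,
\[
\frac{d\log Z_u(\gamma)}{d\gamma}=\frac{d}{d\gamma}\log\!\left(\int_{\R}e^{-t\gamma}\nu_0\right)+\frac{d\log G(\gamma)}{d\gamma}.
\]
The term $d\log G/d\gamma$ is a fixed smooth function on $[0,1[$ with controlled blow-up: it behaves like $O(\gamma^{-1})$ near $0$ (the simple zero of $G$, i.e.\ the factor $\gamma$) and like $O((1-\gamma)^{-1})$ near $1$ (the pole of $\Gamma(1-\gamma)$), hence $|d\log G/d\gamma|\le B_0(\gamma^{-2}+(1-\gamma)^{-2})$ for a universal $B_0$. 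So up to this harmless correction the problem becomes a purely one-dimensional statement about the log-concave family $\nu_\gamma:=e^{-t\gamma}\nu_0/\int e^{-t\gamma}\nu_0$.

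Next I would show the one-dimensional lower bound: if $M(\gamma):=\log\int_{\R}e^{-t\gamma}\nu_0$ then $M$ is convex, $M'(\gamma)=-\int t\,\nu_\gamma=\int(-t)\nu_\gamma$, and $M''(\gamma)=\mathrm{Var}_{\nu_\gamma}(t)$. The key point is to relate the finiteness threshold of $Z_u$ to the support/tail behaviour of $\nu_0$: since $Z_u(\gamma)<\infty$ exactly when $\int e^{-t\gamma}\nu_0<\infty$ (for $\gamma\in\,]0,1[$), the number $c_u$ is precisely the $\gamma$-threshold of the Laplace transform of $\nu_0$, i.e.\ $\nu_0$ has density comparable to $e^{c_u t}$ as $t\to+\infty$ in the log-concave sense. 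For such a measure one has a quantitative lower bound on the ``exponential moment'' $M'(\gamma)$ as $\gamma\uparrow c_u$, of the shape $M'(\gamma)\ge \frac{1}{A_0}\frac{1}{c_u-\gamma}-B_1$; this is where the effective Kahane--Khinchin inequality from \cite{mu} enters, controlling the first moment $\int|t|\nu_\gamma$ by $M'(\gamma)+(\text{lower-order})$ and, conversely, forcing $M'$ to diverge at the rate $1/(c_u-\gamma)$ because the variance of a log-concave measure is comparable to the square of its first moment (Borell's lemma / the concentration estimates underlying Kahane--Khinchin), so $M'' \gtrsim (M')^2$ near the threshold and an ODE comparison gives $M'(\gamma)\gtrsim 1/(c_u-\gamma)$. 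Tracking constants through \cite{mu} is what lets $A$ be pushed arbitrarily close to $16$: the factor $16$ is roughly $(2\cdot 2)^2$ coming from the log-concave reverse-Hölder constant for $p=2$ applied twice (once to $\nu_0$ on $\R$, once inside the Laplace-transform comparison), and absorbing the slack into $B$ is allowed.

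Finally I would reassemble: combine the one-dimensional lower bound on $M'(\gamma)$ with the correction bound on $d\log G/d\gamma$, and replace the $(1-\gamma)^{-2}$ arising from $G$ by $(1-c_u)^{-2}$, which is legitimate because the estimate only needs to hold for $\gamma\le c_u<1$ and on that range $(1-\gamma)^{-2}\le(1-c_u)^{-2}$. This yields exactly
\[
\frac{d\log Z_u(\gamma)}{d\gamma}\ge\frac{1}{A}\frac{1}{c_u-\gamma}-B\bigl(\gamma^{-2}+(1-c_u)^{-2}\bigr),
\]
and the reciprocal form follows by trivial algebraic rearrangement. The main obstacle, I expect, is the quantitative ODE/concentration step: turning ``$\nu_0$ is log-concave with Laplace threshold $c_u$'' into the sharp divergence rate $M'(\gamma)\sim 1/(c_u-\gamma)$ with an explicit, dimension-free constant near $16$ — one must be careful that the effective Kahane--Khinchin bound of \cite{mu} is applied to the correct measure ($\nu_\gamma$, not the pushforward of $dV_X$ under $u$, which is \emph{not} log-concave as noted after Theorem \ref{Thm:Z as log concave}) and that the lower-order terms genuinely only contribute at the scale $\gamma^{-2}$ and $(1-c_u)^{-2}$, matching the inequality \eqref{eq:ineq in thm log} already available from Theorem \ref{Thm:Z as log concave}.
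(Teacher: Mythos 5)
Your overall strategy is the paper's: represent $Z_u(\gamma)=G(\gamma)\int_{\R}e^{-t\gamma}\nu_0$ with $\nu_0$ log-concave via Theorem \ref{Thm:Z as log concave}, use the effective Kahane--Khinchin inequality of \cite{mu} on the tilted measures $\nu_\gamma$, and close with an ODE comparison using the divergence of $d\log Z_u/d\gamma$ at $c_u$. But the crucial differential inequality is stated in the wrong direction, and as written that step fails. Writing $g(\gamma)=d\log Z_u(\gamma)/d\gamma$ (or your $M'$), the inequality $g'\gtrsim g^2$ that you invoke gives $\frac{d}{d\gamma}(g^{-1})\lesssim -c$, and since $g^{-1}>0$ on $[\gamma,c_u[$ this yields the \emph{upper} bound $g(\gamma)\lesssim 1/(c_u-\gamma)$ --- the opposite of what is claimed. (Indeed, a lower bound on the variance cannot force $g$ to blow up at a prescribed rate.) What is actually needed, and what Kahane--Khinchin provides, is the \emph{upper} bound on the second derivative: $\mathrm{Var}_{\nu_\gamma}(t)^{1/2}\le C_2\langle|t-\langle t\rangle_\gamma|\rangle_\gamma\le 2C_2\langle|t|\rangle_\gamma$, which together with the first-moment bound \eqref{eq:ineq in thm log} and the concavity of $\log G_0$ gives $(g')^{1/2}\le 2C_2\,g+B(\gamma^{-1}+(1-\gamma)^{-1})$. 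After absorbing the additive term (by first treating the case $g\ge b_0$ and then reducing to it by replacing $u$ with $u-b_0$, which shifts $g$ by $b_0$), one gets $g'\le Ag^2$; then $\frac{d}{dt}(g^{-1})\le A$ in $t:=c_u-\gamma$, and since $g^{-1}\to 0$ as $t\to 0$ (because $g$ is increasing and $Z_u\to\infty$ at $c_u$ by non-effective openness), integration gives $g\ge (At)^{-1}$. This is the direction your write-up needs to take; the ingredients you list are the right ones, but the chain of implications must be reversed.

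Two smaller points. First, you do not need (and have not justified) the claim that $\nu_0$ has density ``comparable to $e^{c_u t}$'': the paper only uses that $g$ is increasing (convexity of $\log Z_u$) together with $Z_u(\gamma)\to\infty$ as $\gamma\to c_u$ to conclude $g\to\infty$, which is all the ODE step requires. Second, the constant $16$ arises as $(2C_2)^2$ with $C_2\le 2$ the $p=2$ Kahane--Khinchin constant and the extra factor $2$ coming from centering ($\langle|t-\langle t\rangle_\gamma|\rangle_\gamma\le 2\langle|t|\rangle_\gamma$), applied once and then squared through the ODE --- not from two separate applications of the reverse H\"older inequality.
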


The key ingredient in the proof is the following effective refinement
of the moment inequalities in Theorem \ref{thm:moment bounds} (in
the case $p=2)$:
\begin{lem}
There exists a universal constant $B$ such that 
\[
\left(\frac{d^{2}\log Z_{u}(\gamma)}{d^{2}\gamma}\right)^{1/2}\leq2C_{2}\frac{d\log Z_{u}(\gamma)}{d\gamma}+B\left(\gamma^{-1}+(1-\gamma)^{-1}\right)
\]
where $C_{2}\leq2.$
\end{lem}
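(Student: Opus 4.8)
The plan is to feed the representation of $Z_u$ from Theorem~\ref{Thm:Z as log concave} into the cumulant calculus for log-Laplace transforms and then apply the Kahane--Khinchin inequality at $p=2$. Throughout we are in the situation of Theorem~\ref{thm:effective text}: $\sup_X u\le 0$, $c_u<1$, and $\gamma\in\,]0,1[$ is such that $Z_u(\gamma)<\infty$. By the resolution of Demailly--Kollar's openness conjecture (Section~\ref{sec:Effective-openness}) there is $\epsilon>0$ with $Z_u(\gamma+\epsilon)<\infty$; this makes $\gamma\mapsto\log Z_u(\gamma)$ smooth near $\gamma$, makes all the moment integrals below finite, and --- exactly as in the treatment of general $u$ in Section~\ref{subsec:The-proof-for general and open} --- justifies differentiating under the integral sign. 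Since $\frac{d^2}{d\gamma^2}\log Z_u(\gamma)$ equals the variance of $u$ with respect to $\mu_{\gamma u}$, it is nonnegative, so the square root on the left-hand side of the asserted inequality is legitimate.

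First I would write $\log Z_u(\gamma)=L(\gamma)+\log G(\gamma)$, where $L(\gamma):=\log\int_{\R}e^{-t\gamma}\nu_0$ is the log-Laplace transform of the log-concave measure $\nu_0$ of Theorem~\ref{Thm:Z as log concave} and $G(\gamma)=\gamma/(\Gamma(1-\gamma)/2)$. Differentiating $L$ twice produces the second cumulant of the tilted probability measure $\nu_\gamma$, namely $L''(\gamma)=\mathrm{Var}_{\nu_\gamma}(t)$. For the Gamma-factor, $\log G(\gamma)=\log 2+\log\gamma-\log\Gamma(1-\gamma)$ gives $(\log G)''(\gamma)=-\gamma^{-2}-(\log\Gamma)''(1-\gamma)$, and since $\Gamma$ is log-convex on $]0,\infty[$ (equivalently, the trigamma function is positive) this is $\le 0$. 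Hence
\[
\frac{d^2\log Z_u(\gamma)}{d\gamma^2}=\mathrm{Var}_{\nu_\gamma}(t)+(\log G)''(\gamma)\le\mathrm{Var}_{\nu_\gamma}(t)\le\int_{\R}|t|^2\,d\nu_\gamma .
\]
This is the only place the precise shape of $G$ is used: the $(1-\gamma)^{-1}$-type blow-up of $(\log G)''$ near $\gamma=1$ has the favourable sign, so it is simply discarded rather than estimated.

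Next I would apply the Kahane--Khinchin reverse H\"older inequality~\ref{eq:reversed H=0000F6lder log concave} with $p=2$ to the log-concave probability measure $\nu_\gamma$ --- using that a log-concave probability measure on $\R$ has finite moments of all orders --- obtaining $\bigl(\int_{\R}|t|^2\,d\nu_\gamma\bigr)^{1/2}\le C_2\int_{\R}|t|\,d\nu_\gamma$, with $C_2$ universal and, by the explicit $L^2$--$L^1$ constant for log-concave measures, $C_2\le 2$. Then inequality~\ref{eq:ineq in thm log} of Theorem~\ref{Thm:Z as log concave} bounds $\int_{\R}|t|\,d\nu_\gamma$ by $\frac{d\log Z_u(\gamma)}{d\gamma}+C\bigl(\gamma^{-1}+(1-\gamma)^{-1}\bigr)$. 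Chaining these three estimates gives
\[
\left(\frac{d^2\log Z_u(\gamma)}{d\gamma^2}\right)^{1/2}\le C_2\,\frac{d\log Z_u(\gamma)}{d\gamma}+C_2C\bigl(\gamma^{-1}+(1-\gamma)^{-1}\bigr),
\]
which is the claim with $B:=C_2C$ --- in fact with $C_2$ rather than $2C_2$ in front of the first term; the weaker constant in the statement is harmless, and it reappears if one chooses to centre $t$ before applying Kahane--Khinchin (via $\bigl|\int_\R t\,d\nu_\gamma\bigr|\le\int_\R|t|\,d\nu_\gamma$).

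I do not expect a genuine obstacle here, since Theorem~\ref{Thm:Z as log concave} carries the analytic weight. The only points requiring attention are the finiteness and regularity issues in the first paragraph --- invoking openness so that $\log Z_u$ is smooth at $\gamma$ and so that $\nu_\gamma$ has a finite second moment --- and the sign of $(\log G)''$, which is what allows the Gamma-contribution to be dropped outright.
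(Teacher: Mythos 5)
Your proof is correct and follows essentially the same route as the paper: decompose $\log Z_{u}$ via Theorem \ref{Thm:Z as log concave}, observe that $\frac{d^{2}}{d\gamma^{2}}\log G_{0}\leq0$ and discard it, apply the Kahane--Khinchin inequality at $p=2$ to the log-concave measure $\nu_{\gamma}$, and finish with the first-moment bound \ref{eq:ineq in thm log}. The only (minor) divergence is that the paper applies the centered constant of \cite{mu} to $t-\left\langle t\right\rangle _{\gamma}$ and recovers the factor $2$ by the triangle inequality, whereas you bound the variance by the raw second moment and then invoke the non-centered Kahane--Khinchin constant, for which the explicit bound $C_{2}\leq2$ is not literally what \cite{mu} states --- a point you flag yourself and which does not affect the universality of $B$.
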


\begin{proof}
By Thm \ref{Thm:Z as log concave} (and its proof) the function $Z_{u}(\gamma)$
on $]0,c_{u}[$ admits the following representation, where $\Gamma(s)$
denotes the classical Gamma-function:

\begin{equation}
Z_{u}(\gamma)=G_{0}(\gamma)\int_{\R}e^{-t\gamma}\nu_{0},\,\,\,G_{0}(\gamma):=\frac{\gamma}{\Gamma(1-\gamma)/2},\label{eq:Z in terms of nu 1-1}
\end{equation}
 for a log-concave measure $\nu_{0}$ on $\R$ (depending on $u)$.
Moreover,
\begin{equation}
\int_{\R}|t|d\nu_{\gamma}\leq\frac{d\log Z_{u}(\gamma)}{d\gamma}+\gamma^{-1}+C_{\gamma},\,\,\,C_{\gamma}:=\frac{\int_{0}^{\infty}e^{-r^{2}}|\log r^{2}|r^{-2\gamma}rdr}{\int_{0}^{\infty}e^{-r^{2}}r^{-2\gamma}rdr}\leq C(1-\gamma)^{-1}\label{eq:ineq for first moment of nu}
\end{equation}
where $\nu_{\gamma}$ denotes the log-concave probability measure
$e^{-t\gamma}\nu_{0}/\int_{\R}e^{-t\gamma}\nu_{0}$ on $\R$ and $C$
is a universal constant (that can be estimated using \cite[Thm 1]{g-q}).
Now, applying formula \ref{eq:Z in terms of nu 1-1} gives

\[
\frac{d^{2}\log Z_{u}}{d^{2}\gamma}=\frac{d^{2}\log G_{0}}{d^{2}\gamma}+\left\langle \left(t-\left\langle t\right\rangle _{\gamma}\right)^{2}\right\rangle _{\gamma}
\]
 where $\left\langle \cdot\right\rangle _{\gamma}$ denotes integration
wrt the probability measure $\nu_{\gamma}$ on $\R$. Denote by $C_{2}$
the best constant in the following inequality 
\[
\left(\int_{\R}\left|t\right|^{2}\nu\right)^{1/2}\leq C_{2}\left(\int_{\R}\left|t\right|\nu\right)
\]
 for centered log-concave probability measures $\nu.$ By the recent
result \cite[Thm 1.2]{mu} on effective Kahane-Khinchin inequalities
the following explicit bound holds: 
\[
C_{2}\leq2
\]
(see also \cite{ei} for related results). Since $\nu_{\gamma}$ is
log-concave it thus follows that 
\[
\left\langle \left(t-\left\langle t\right\rangle _{\gamma}\right)^{2}\right\rangle _{\gamma}\leq C_{2}\left\langle \left(t-\left\langle t\right\rangle _{\gamma}\right)\right\rangle _{\gamma}\leq2C_{2}\left\langle |t|\right\rangle _{\gamma}.
\]
The proof is thus concluded by applying the inequality \ref{eq:ineq for first moment of nu}
and noting that $\frac{d^{2}\log G_{0}}{d^{2}\gamma}\leq0$ (since
both $\log\gamma$ and $-\log\Gamma(1-\gamma)$ are convex). Alternatively,
by including $\frac{d^{2}\log G_{0}}{d^{2}\gamma}$ the value of the
constant $B$ could be decreased slightly. 
\end{proof}

\subsubsection{Proof of Theorem \ref{thm:effective text}}

Set $g(\gamma):=d\log Z_{u}(\gamma)/d\gamma.$ First observe that,
regardless of the assumption that $u\leq0,$
\begin{equation}
g(\gamma)\rightarrow\infty,\,\,\,\gamma\rightarrow c_{u}.\label{eq:div in pf}
\end{equation}
Indeed, $g(\gamma)$ is increasing, since $\log Z(\gamma)$ is convex.
Hence, if the divergence in question does not hold, then $Z(\gamma)$
is bounded as $\gamma\rightarrow c_{u},$ which contradicts that $Z(\gamma)\rightarrow\infty.$ 

By the previous lemma 
\[
\left(\frac{dg(\gamma)}{d\gamma}\right)^{1/2}\leq2C_{2}g(\gamma)+B_{\gamma},\,\,B_{\gamma}:=B\left(\gamma^{-1}+(1-\gamma)^{-1}\right)
\]
 Now assume that $\gamma\in[\gamma_{0},c_{u}[.$ Then 
\[
B_{\gamma}\leq b_{0}:=B\left(\gamma_{0}^{-1}+(1-c_{u})^{-1}\right).
\]
 Thus, if we assume that $b_{0}\leq g,$ then
\[
\left(\frac{dg(\gamma)}{d\gamma}\right)^{1/2}\leq(2C_{2}+1)g(\gamma),
\]
 i.e.

\[
\frac{dg(\gamma)}{d\gamma}\leq Ag(\gamma)^{2},\,\,\,A:=(2C_{2}+1)^{2}
\]
 Setting $t:=c_{u}-\gamma$ this means that 
\[
\frac{d(g^{-1})}{dt}\leq A
\]
 when $t\in]0,c_{u}-\gamma_{0}[.$ Since $g^{-1}\rightarrow0$ as
$t\rightarrow0$ (by \ref{eq:div in pf}) it thus follows that $g\geq(At)^{-1}$
when $t\in]0,c_{u}-\gamma_{0}[,$ under the assumption that $g\geq b_{0}.$
Finally, replacing a general given $u,$ satisfying $u\leq0,$ with
$u-b_{0}$ gives $g_{u-b_{0}}=g_{u}+b_{0}\geq b_{0}.$ Hence, applying
the previous bound to $u-b_{0}$ gives 
\[
g_{u}+b_{0}\geq(At)^{-1},\,\,\,\,t\in]0,c_{u}-\gamma_{0}[
\]
In other words, for $\gamma\in[\gamma_{0},c_{u}[$
\[
g_{u}(\gamma)\geq\frac{1}{A}\frac{1}{(c_{u}-\gamma)}-b_{0},\,\,\,\gamma\in[\gamma_{0},c_{u}[
\]
In particular, taking $\gamma=\gamma_{0}$ concludes the proof. In
fact, $A$ can be taken arbitrarily close to $(2C_{2})^{2}$ (at the
expense of increasing $B)$ if one applies the previous argument to
$u-\epsilon^{-1}$ for $\epsilon$ sufficiently small. In particular,
since $C_{2}\leq2$ this shows that $A$ can be taken arbitrarily
close to $16.$

\subsection{Proof of Cor \ref{cor:intro}}

We next deduce the following reformulation of Cor \ref{cor:intro}:
\begin{cor}
Let $X$ be a Fano variety assume that $u\in\text{PSH}(X,\omega)$
satisfies $u\leq0$ and that $\int_{X}dV=1.$ If $Z_{u}(\gamma)<\infty$
and $Z_{u}(1-\delta)=\infty$ for some $\delta>0.$ Then there exist
universal constants $A$ and $b$ such that 
\[
Z_{u}(\gamma)\geq\frac{e^{-b\left(\gamma^{-1}+\gamma\delta^{-2}\right)}}{(c_{u}-\gamma)^{1/A}},
\]
 when $\gamma\in]0,c_{u}[.$
\end{cor}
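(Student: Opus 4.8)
The plan is to integrate the differential inequality furnished by Theorem~\ref{thm:effective text}. First I would record two elementary facts. Since $Z_u(1-\delta)=\infty$, the definition of $c_u$ forces $c_u\le 1-\delta$, so $(1-c_u)^{-2}\le\delta^{-2}$; and since $u\le 0$ with $\int_X dV=1$, for every $\gamma'\ge 0$ one has $Z_u(\gamma')=\int_X e^{\gamma'|u|}\,dV\ge\int_X dV=1$, hence $\log Z_u(\gamma')\ge 0$. Feeding the first fact into Theorem~\ref{thm:effective text} gives, for all $\gamma'\in\,]0,c_u[\,$,
\[
\frac{d\log Z_u(\gamma')}{d\gamma'}\ \ge\ \frac{1}{A}\,\frac{1}{c_u-\gamma'}-B\bigl(\gamma'^{-2}+\delta^{-2}\bigr).
\]

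Next, fix $\gamma\in\,]0,c_u[\,$ and integrate this inequality over $\gamma'\in[\gamma/2,\gamma]$. This is legitimate because $\log Z_u$ is convex, hence locally Lipschitz (and absolutely continuous) on the open interval $]0,c_u[\,$, and $Z_u(\gamma')<\infty$ throughout $[\gamma/2,\gamma]$ by monotonicity of $Z_u$ on $[0,c_u[\,$. The first term integrates to $\tfrac1A\log\tfrac{c_u-\gamma/2}{c_u-\gamma}$, while $\int_{\gamma/2}^{\gamma}(\gamma'^{-2}+\delta^{-2})\,d\gamma'=\gamma^{-1}+\tfrac{\gamma}{2}\delta^{-2}\le\gamma^{-1}+\gamma\delta^{-2}$, so
\[
\log Z_u(\gamma)\ \ge\ \log Z_u(\gamma/2)+\frac1A\log(c_u-\gamma/2)-\frac1A\log(c_u-\gamma)-B\bigl(\gamma^{-1}+\gamma\delta^{-2}\bigr).
\]
Then I would dispose of the two spurious terms: $\log Z_u(\gamma/2)\ge 0$ by the second fact, and $c_u-\gamma/2=(c_u-\gamma)+\gamma/2>\gamma/2$ together with the elementary bound $\gamma\log(2/\gamma)\le 1$ for $\gamma\in\,]0,1[$ gives $\log(c_u-\gamma/2)\ge\log(\gamma/2)\ge-\gamma^{-1}$. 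Collecting terms and setting $b:=B+A^{-1}$ yields $\log Z_u(\gamma)+\tfrac1A\log(c_u-\gamma)\ge-b(\gamma^{-1}+\gamma\delta^{-2})$, and exponentiating produces exactly $Z_u(\gamma)\ge e^{-b(\gamma^{-1}+\gamma\delta^{-2})}/(c_u-\gamma)^{1/A}$, with the same universal $A$ as in Theorem~\ref{thm:effective text} (so $A$ near $16$) and a universal $b$.

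I do not expect a serious obstacle here, since the statement is essentially an integrated form of the already-established lower bound \eqref{eq:lower bd}. The only points requiring genuine care are: (i) justifying the term-by-term integration of the logarithmic derivative, which rests on the convexity and local finiteness of $\log Z_u$ on $]0,c_u[\,$; and (ii) choosing the lower limit of integration proportional to $\gamma$ (rather than a fixed constant) so that the $\int\gamma'^{-2}$ contribution stays of order $\gamma^{-1}$ and all constants remain independent of $X$, $u$, $\gamma$ and $\delta$. The conversion of the leftover $\log(\gamma/2)$ into a term of the form $\gamma^{-1}$ via $\gamma\log(2/\gamma)\le 1$ is the last bookkeeping step.
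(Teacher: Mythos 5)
Your proof is correct and follows essentially the same route as the paper: integrate the bound of Theorem~\ref{thm:effective text} over $[\gamma/2,\gamma]$, discard $\log Z_u(\gamma/2)\geq 0$ using $u\leq 0$ and $\int_X dV=1$, bound $c_u-\gamma/2\geq\gamma/2$, and absorb the resulting $\log(\gamma/2)$ into the $\gamma^{-1}$ term. The only cosmetic additions over the paper's own argument are the explicit reduction of $(1-c_u)^{-2}$ to $\delta^{-2}$ via $c_u\leq 1-\delta$ and the explicit elementary bound $\gamma\log(2/\gamma)\leq 1$, both of which the paper leaves implicit.
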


\begin{proof}
Integrating $d\log Z_{u}(\gamma)/d\gamma$ over $[\gamma/2,\gamma]$
and using the bound in the previous theorem gives 
\[
\log Z_{u}(\gamma)-\log Z_{u}(\frac{\gamma}{2})\geq\frac{1}{A}\log\frac{1}{(c_{u}-\gamma)}+\frac{1}{A}\log(c_{u}-\gamma/2)-B\left(\gamma^{-1}+\frac{1}{2}\delta^{-2}\gamma\right).
\]
Since $u\leq0$ we have $Z_{u}(\frac{\gamma}{2})\geq\int_{X}dV=1.$
Moreover, $c_{u}-\gamma/2\geq\gamma/2,$ using that $\gamma<c_{u}.$
Hence,
\[
\log Z_{u}(\gamma)\geq\frac{1}{A}\log\frac{1}{(c_{u}-\gamma)}+\frac{\log(\gamma/2)}{A}-B\left(\gamma^{-1}+\frac{1}{2}\delta^{-2}\gamma\right).
\]
Finally, since $\gamma\leq1$ we have $\frac{\log(\gamma/2)}{A}-B\gamma^{-1}\geq-b\gamma^{-1}$
for $b$ sufficiently large (depending on $A$ and $B).$ 
\end{proof}

\section{Universal bounds on Archimedean zeta functions and K-unstable Fanos}

The logarithmic derivative of $Z_{u}(\gamma),$ appearing in Theorem
\ref{thm:effective text}, does not seem to have appeared previously
in the context of Demailly-Kollar's openness conjecture. But its lower
bound can be motivated as follows. Consider the case when $u$ corresponds
to an anti-canonical effective $\Q-$divisor $D$ on $X.$ This means
that $kD$ is cut out by some section $f_{k}\in H^{0}(X,K_{X}^{*\otimes k}),$
for a positive integer $k$ and
\begin{equation}
u=k^{-1}\log\left\Vert f_{k}\right\Vert ^{2},\,\,\,\,f_{k}\in H^{0}(X,K_{X}^{*\otimes k})\label{eq:u anal}
\end{equation}
 where $\left\Vert \cdot\right\Vert $ denotes the metric on $K_{X}^{*\otimes k},$
corresponding to $dV.$ Hence, 
\begin{equation}
Z_{u}(\gamma)=\int_{X}\left\Vert f_{k}\right\Vert ^{-2\gamma/k}dV_{}.\label{eq:archimed zeta}
\end{equation}
By scaling $f_{k}$ we may assume that $u\leq0,$ i.e. that
\begin{equation}
\sup_{X}\left\Vert f_{k}\right\Vert \leq1.\label{eq:normal f k}
\end{equation}
The complex singularity $c$ of $u$ coincides with the log-canonical
threshold of the divisor $D:$ 
\[
c_{u}=\text{lct \ensuremath{(D)}}
\]
(the assumption that $c_{u}<1$ equivalently means that the divisor
$D$ is\emph{ non-log canonical} in the standard sense of birational
algebraic geometry \cite{ko0}). It is well-known that $Z_{u}(\gamma)$
extends to a meromorphic function on $\C$. In fact, $Z_{u}(\gamma)$
is an instance of the \emph{geometric Igusa zeta functions} for local
fields studied in \cite{c-t,c} (after making the change of variables
$s=-\gamma/k.$ More precisely, the present setup concerns the field
$\C$ endowed with its standard Archimedean absolute value. All the
poles of $Z_{u}(\gamma)$ are located at negative rational numbers
and $-c_{u}$ is the largest pole of $Z_{u}(\gamma)$ (as follows
from resolution of singularities; see \cite[Thm 5.4.1]{ig} and \cite[Prop 4.2.4]{c-t}).
In particular, there exists a positive integer $m$ (the order of
the largest pole) such that, for $\gamma$ close to $c_{u}:$ 
\[
Z_{u}(\gamma)=\frac{F_{u}(\gamma)}{(c_{u}-\gamma)^{m}},
\]
 where $F_{u}(\gamma)$ is holomorphic function which is non-vanishing
for $\gamma=c_{u}.$ Differentiating thus gives 
\[
\frac{d\log Z_{u}(\gamma)}{d\gamma}=m\frac{1}{(c_{u}-\gamma)}-G_{u}(\gamma)
\]
 for a local holomorphic function $G_{u}(\gamma).$ Since $m\geq1$
this means that for any fixed divisor the lower bound \ref{eq:lower bd}
holds with $A=1,$ but with $B$ depending on $G_{u}(\gamma)$ and
thus on $u$ (i.e. on the divisor $D).$ From this point of view,
the main point of the lower bound \ref{eq:lower bd} is thus the universality.

\subsection{Application to K-unstable Fano varieties }

Given a Fano variety $X,$ set 
\[
N_{k}:=\dim H^{0}(X,K_{X}^{*\otimes k}),
\]
 which tends to infinity as $k$ is increased. Let $\mathcal{D}_{k}$
be the anti-canonical effective $\Q-$divisor on $X^{N_{k}}$ whose
support consists of all configurations $(x_{1},...x_{N_{k}})$ of
$N_{k}$ points on $X,$which are in ``bad position'' with respect
to $H^{0}(X,K_{X}^{*\otimes k}):$ 

\[
\text{Supp}\mathcal{D}_{k}:=\left\{ (x_{1},...x_{N_{k}}):\,\exists s_{k}\in H(X,K_{X}^{*\otimes k}):\,\,s_{k}(x_{i})=0,\,\,\,\forall i,\,\,\,s_{k}\not\equiv0\right\} .
\]
 Denote by $Z_{k}(\gamma)$ the corresponding Archimedean zeta function
\ref{eq:archimed zeta}. In the context of the probabilistic approach
to the construction of Kähler-Einstein metrics, it is conjectured
in \cite{berm8 comma 5} that $X$ admits a unique Kähler-Einstein
metric iff $\text{lct}\ensuremath{(\mathcal{D}_{k})}\geq1+\epsilon$
for $k$ sufficiently large, for some $\epsilon>0.$ The ``if direction''
was established in \cite{f-o}. More precisely, combining results
in \cite{f-o,rtz} yields
\[
\limsup_{k\rightarrow\infty}\text{lct}\ensuremath{(\mathcal{D}_{k})\leq\delta(X)}
\]
(see \cite{berm13} for a direct analytic proof). In particular, if
$X$ is\emph{ K-unstable} (i.e. not K-semistable) - which by \cite[Thm B]{bl-j}
is equivalent to $\delta(X)<1$ - then
\[
\limsup_{k\rightarrow\infty}\text{lct}\ensuremath{(\mathcal{D}_{k})<1.}
\]
As a consequence, if $X$ is K-unstable, the lower bound \ref{eq:lower bd}
applies to the Archimedean zeta function $Z_{k}(\gamma)$ corresponding
to the divisor $\mathcal{D}_{k}$ on $X^{N_{k}}:$ 
\begin{equation}
\frac{d\log Z_{k}(\gamma)}{d\gamma}\geq\frac{1}{A}\frac{1}{(\text{lct \ensuremath{(\mathcal{D}_{k})}}-\gamma)}-B\left(\gamma^{-2}+(1-\text{lct(\ensuremath{\mathcal{D}_{k})}})^{-2}\right),\,\,\,\gamma\in]0,\text{lct \ensuremath{(\mathcal{D}_{k})[}}\label{eq:low bd Z k}
\end{equation}
for $k$ sufficiently large. This yields a quantitative universal
lower bound on the rate of the blow-up of the logarithmic derivative
of $Z_{k}(\gamma)$ as $\gamma$ increases towards $\text{lct \ensuremath{(\mathcal{D}_{k}).} }$ 

\subsubsection*{Concluding speculations}

Under the hypothesis that the meromorphic function $Z_{k}(\gamma)$
on $\C$ is zero-free and holomorphic in a $k-$independent neighborhood
of $[0,\text{lct \ensuremath{(\mathcal{D}_{k})[}}$ in $\C,$ it follows
from results in \cite{berm12} that $\text{lct}\ensuremath{(\mathcal{D}_{k})}$
converges towards $\delta(X)$ and
\[
\lim_{k\rightarrow\infty}N_{k}^{-1}\frac{d\log Z_{k}(\gamma)}{d\gamma}=E(\frac{\omega_{u_{\gamma}}^{n}}{V}),\,\,\gamma<\delta(X),
\]
 where $E(\mu)$ denotes the \emph{pluricomplex energy} of a probability
measure $\mu$ on $X,$ relative to $\omega$ (using the notation
in \cite{berm12}) and $u_{\gamma}$ denotes the unique potential
of $\omega_{t}$ solving Aubin's continuity equation \ref{eq:Aubin intr}
for $t=\gamma,$ normalized so that $\sup_{X}u_{\gamma}=0$ \footnote{In the case $n=1$ the previous asymptotics were established unconditionally
in \cite{berm12}}. Since $E(\omega_{u_{\gamma}}^{n}/V)$ is comparable to $d_{1}(u_{\gamma},0)$
it follows from formula \ref{eq:divergence} that 
\[
\lim_{\gamma\rightarrow\delta(X)}E(\omega_{u_{\gamma}}^{n}/V)=\infty.
\]
Comparing with the lower bound \ref{eq:low bd Z k} leads one to wonder
if the blow-up rate of $E(\omega_{u_{\gamma}}^{n}/V)$ can also be
quantified? However, no such information can be deduced from the lower
bound \ref{eq:low bd Z k}, since the bound is suppressed when it
is divided by $N_{k}$ and $k\rightarrow\infty.$ But one can get
an idea of what to expect by looking at the ``local'' setup where
$X$ is replaced with the unit-ball $B_{1}$ in $\C^{n}$ and the
reference form $\omega$ is assumed to vanish identically on $B_{1},$
considered in \cite{berm-bern1,g-k-y}. Then $u_{\gamma}$ is taken
to be a continuous plurisubharmonic function solving 
\begin{equation}
(dd^{c}u_{\gamma})^{n}=\frac{e^{-\gamma u}dz\wedge d\bar{z}}{\int_{B}e^{-\gamma u}dz\wedge d\bar{z}}\,\,\text{on }B_{1},\,\,\,\,u=0\,\,\text{on \ensuremath{\partial B}}_{1}=0.\label{eq:ma on ball}
\end{equation}
 Compared to equation \ref{eq:MA on sing Fano} we have set $V=1,$
which in this local setup, can always be arranged by rescaling $\gamma.$
If we impose the condition that $u$ be rotationally invariant the
boundary condition on $u$ is equivalent to the condition that $\sup_{B_{1}}u=0,$
which is inline with the condition on $u_{\gamma}$ employed in the
global setup of Fano manifolds. Under this symmetry condition there
is a unique solution $u_{\gamma}$ to equation \ref{eq:ma on ball}
when $\gamma<n+1$ and the explicit computations in \cite[Section 3.3]{berm-bern1}
reveal that there exists a constant $b_{n}$ such that
\[
E(\omega_{u_{\gamma}}^{n}/V)=\frac{n}{n+1}\log(\frac{1}{\delta(B_{1})-\gamma})+b_{n},\,\,\,\gamma\rightarrow\delta(B_{1})(:=n+1).
\]
Accordingly, it seems natural to ask if for any K-unstable Fano manifold
$X$ there exists a positive constant $a$ such that, as $\gamma$
is increased towards $\delta(X),$
\[
E(\omega_{u_{\gamma}}^{n}/V)=a\log(\frac{1}{\delta(X)-\gamma})+O(1)?
\]
 However, it may be that $a$ not only depends on $X$ but also on
the fixed Kähler form $\omega$ in $c_{1}(X).$

\end{document}